\def\rouge{ \textcolor{red} }
\def\unprotectedboldentry#1{\textcolor{Red}{\textbf{#1}}}
\def\boldentry{\protect\unprotectedboldentry}
\newcommand{\tikztableau}[2][scale=0.6,every node/.style={font=\small}]{
    \def\newtableau{#2}
    \begin{array}{c}
    \begin{tikzpicture}[#1]
    \coordinate (x) at (-0.5,0.5);
    \coordinate (y) at (-0.5,0.5);
    \foreach \row in \newtableau {
        \coordinate (x) at ($(x)-(0,1)$);
        \coordinate (y) at (x);
        \foreach \entry in \row {
            \ifthenelse{\equal{\entry}{X}}
               {
                \node (y) at ($(y) + (1,0)$) {};
                \fill[color=gray!10] ($(y)-(0.5,0.5)$) rectangle +(1,1);
                \draw[color=gray] ($(y)-(0.5,0.5)$) rectangle +(1,1);
               }
               {
                \ifthenelse{\equal{\entry}{\boldentry X}}
                   {
                    \node (y) at ($(y) + (1,0)$) {};
                    \fill[color=gray] ($(y)-(0.5,0.5)$) rectangle +(1,1);
                    \draw ($(y)-(0.5,0.5)$) rectangle +(1,1);
                   }
                   {
                    \node (y) at ($(y) + (1,0)$) {\entry};
                    \draw ($(y)-(0.5,0.5)$) rectangle +(1,1);
                   }
               }
            }
        }
    \end{tikzpicture}
    \end{array}}
\newcommand{\tikztableausmall}[1]{\tikztableau[scale=0.45,every node/.style={font=\rm\small}]{#1}}
\newcommand{\tikztableautiny}[1]{\tikztableau[scale=0.25,every node/.style={font=\rm\tiny}]{#1}}
 \newtheorem{theorem}{Theorem}[section]
\newtheorem{lemma}[theorem]{Lemma}
\newtheorem{proposition}[theorem]{Proposition}
\newtheorem{corollary}[theorem]{Corollary}
\theoremstyle{definition}
\newtheorem{example}[theorem]{Example}
\newtheorem{remark}[theorem]{Remark}
\newtheorem{definition}[theorem]{Definition}
\numberwithin{equation}{section}
\def\u#1{{\bf u}_{#1}}
\def\a{a}
\def\b{b}
\def\c{c}
\def\d{d}
\def\S{{\mathcal S}}
\def\t#1{{\bf t}_{#1}}
\def\s#1{{\bf s}_{#1}}
\def\HH{{\bf h}}
\title [FG monoids and Pieri operations]{Fomin-Greene monoids and Pieri operations} 
\author{Carolina Benedetti and Nantel Bergeron}
\address[Nantel Bergeron]
 {Department of Mathematics and Statistics\\York University\\
 Toronto, Ontario M3J 1P3\\
 CANADA}
 \email{bergeron@mathstat.yorku.ca}
 \urladdr{http://www.math.yorku.ca/bergeron}
\address[Carolina Benedetti]
 {Department of Mathematics and Statistics\\York University\\
 Toronto, Ontario M3J 1P3\\
 CANADA}
 \email{carobene@mathstat.yorku.ca}
\thanks{N. Bergeron is supported in part by NSERC}
\begin{document}

\begin{abstract} We explore monoids generated by operators on certain infinite partial orders. Our starting point is the work of Fomin and Greene on monoids satisfying the relations  $(\u{r}+\u{r+1})\u{r+1}\u{r}=\u{r+1}\u{r}(\u{r}+\u{r+1})$
 and $\u{r}\u{t}=\u{s}\u{r}$ if $|r-t|>1.$
Given such a monoid, the non-commutative functions in the variables $\u{}$ are shown to commute. Symmetric functions in these operators often encode interesting structure constants. 
Our aim is to introduce similar results for more general monoids not satisfying the relations of Fomin and Greene. This paper is an extension of a talk by the second author at the workshop on algebraic monoids, group embeddings and algebraic combinatorics at The Fields Institute in 2012.
\end{abstract}

\maketitle

\section{Introduction}\label{intro}
In their work on the plactic monoid, Lascoux and Sch\i{u}tzenberger~\cite{LS81} constructed the Schur functions in terms of noncommutative variables satifying only Knuth relations.  It was subsequently discovered that symmetric functions can be constructed using different monoid algebras, for example the nil-plactic monoid, the nil-coxeter monoid or the $H_n(0)$ algebra. A uniform understanding of these constructions can be found in 
the seminal work of Fomin and Greene~\cite{FG98}.

One of the main advantages of the work of Fomin and Green is that it shows the Schur positivity of certain generating functions defined on those monoid algebras.
This is a central problem in algebraic combinatorics and we still have several open problems of this kind. 
The theory of~\cite{FG98} worked very well for the problems it is set to solve, but it also has its limitations.

Here we want to show that this quest of understanding symmetric functions inside a monoid algebra is  very alive and new results are still underway and needed.
In this presentation, very close to the approach of Fomin  and Greene, we look at monoids generated by operators acting on an infinite poset. 
We show that a certain space of functions on the monoid algebra of operators is isomorphic to symmetric functions (or a subspace of symmetric functions). 
These subspaces are obtained via Pieri operators as defined in~\cite{bmws}. The posets we consider are very often produced from a Combinatorial Hopf Algebra as defined in~\cite{ABS,BLL}. Unlike the theory in ~\cite{FG98}, we are not guaranteed to have Schur positivity. Even when the object in question is Schur positive the rule of Fomin and Greene is not applicable. One has to develop new techniques to deal with this. It has been done in some cases, but it is still open in others.

We keep this paper as a talk, like a story. We introduce the results as they come from the examples. In the first part, Section~\ref{sc:classic}, we look at a classical example. Next, in Section~\ref{sc:New}, we look at less known examples and constructions which are unrelated to~\cite{FG98}. We then look at what can be done in the future in Section~\ref{sc:future}.

\section{A classical example}\label{sc:classic}

\subsection{Operators on the Young lattice}
We start by a classical construction of Schur functions inspired by~\cite{F95}. A \emph{partition} of an integer $n$ is a sequence of integers $\lambda=(\lambda_1,\lambda_2,\ldots,\lambda_\ell)$ such that $n=\lambda_1+\lambda_2+\cdots+\lambda_\ell$ and $\lambda_1\ge\lambda_2\ge\cdots\ge\lambda_\ell>0$.
When $\lambda$ is a partition of $n$ we denote it by $\lambda\vdash n$. We also denote the number of parts of $\lambda$ by $\ell(\lambda)=\ell$ and its size by $|\lambda|=n$. The diagram of a partition $\lambda$, also denoted $\lambda$, is the subset of ${\mathbb Z}\times{\mathbb Z}$ given by $\lambda=\big\{ (i,j) : 1\le j\le \ell,\ 1\le i\le \lambda_j \big\}$. We draw this by putting a unit box with coordinates $(i,j)$ in the bottom left corner. For example the partition $\lambda=(4,2,1)$ is depicted by
$$ \tikztableautiny{{X},{X,X},{X,X,X,X}}.$$
The Young lattice $\mathcal Y$ consists of all partitions $\lambda\vdash n\ge 0$, ordered by inclusion of diagrams. The empty partition is the unique partition for $n=0$.
An inclusion $\mu\subset\lambda$ is a cover if and only if $\mu \cup \{(i,j)\}=\lambda$ for a unique cell $(i,j)$. We will label such a cover by an edge labeled by $c_{i,j}=j-i$:
 $$\mu  \ \stackrel{c_{i,j}}{\longrightarrow}\
              \lambda$$
We can draw the lower part of this poset as 
$$\begin{tikzpicture}[description/.style={fill=white,inner sep=2pt, font=\tiny }]
\matrix (m) [matrix of math nodes, row sep=2em,
column sep=1.5em, font=\small]
{ \tikztableautiny{{X,X,X, X}} & \tikztableautiny{{X},{X,X, X}}  & \tikztableautiny{{X,X},{X, X}}  & \tikztableautiny{{X},{X}, {X,X}} & \tikztableautiny{{X},{X},{X},{X}} \\
&  \tikztableautiny{{X,X,X}} & \tikztableautiny{{X},{X,X}}  & \tikztableautiny{{X},{X},{X}} & \\
&  \tikztableautiny{{X,X}} &   & \tikztableautiny{{X},{X}} & \\
&  &  \tikztableautiny{{X}} &   & \\
&  &  \emptyset&   & \\
 };
\path[->] (m-5-3) edge node[auto,swap, font=\tiny] {$  0 $} (m-4-3);
\path[->] (m-4-3) edge node[auto,swap, font=\tiny] {$  1 $} (m-3-2);
\path[->] (m-4-3) edge node[auto,swap, font=\tiny] {$  -1 $} (m-3-4);
\path[->] (m-3-2) edge node[auto,swap, font=\tiny] {$  2 $} (m-2-2);
\path[->] (m-3-2) edge node[auto,swap, font=\tiny] {$  -1 $} (m-2-3);
\path[->] (m-3-4) edge node[auto,swap, font=\tiny] {$  1 $} (m-2-3);
\path[->] (m-3-4) edge node[auto,swap, font=\tiny] {$  -2 $} (m-2-4);
\path[->] (m-2-2) edge node[auto,swap, font=\tiny] {$  3 $} (m-1-1);
\path[->] (m-2-2) edge node[auto,swap, font=\tiny] {$  -1 $} (m-1-2);
\path[->] (m-2-3) edge node[auto,swap, font=\tiny] {$  2 $} (m-1-2);
\path[->] (m-2-3) edge node[auto,swap, font=\tiny] {$  0 $} (m-1-3);
\path[->] (m-2-3) edge node[auto,swap, font=\tiny] {$  -2 $} (m-1-4);
\path[->] (m-2-4) edge node[auto,swap, font=\tiny] {$  1 $} (m-1-4);
\path[->] (m-2-4) edge node[auto,swap, font=\tiny] {$  -3 $} (m-1-5);
\end{tikzpicture}
$$

Consider the free ${\mathbb Z}$-module ${\mathbb Z}{\mathcal Y}$ spanned by all partitions of $n\ge 0$.
We  define linear operators  $\u{r}$ for each $r \in {\mathbb Z}$ as follows
\begin{equation}\label{eq:op_partition}
\begin{array}{rcl}
\u{r} \colon\  {\mathbb Z}{\mathcal Y}&\longrightarrow& \quad{\mathbb Z}{\mathcal Y},\\
\mu\quad&\longmapsto&\ \ \rule{0pt}{28pt} \left\{\begin{array}{ll}
\lambda 
&\mbox{ if }  \mu  \ \stackrel{r}{\longrightarrow}\
              \lambda \mbox{ in } {\mathcal Y}
\medskip       
\\      0& \mbox{ otherwise.}\end{array}\right.
\end{array} 
\end{equation}
For example 
$$ \u{0}\big( \tikztableautiny{{X},{X, X}} \big) =\tikztableautiny{{X,X},{X, X}} \qquad \mbox{ and } \qquad   \u{1}\big( \tikztableautiny{{X},{X, X}} \big) = 0. $$ 
We are interested in the monoid ${\mathcal M}\langle\u{r}\rangle$ generated by the operators $\u{r}$ for $r\in {\mathbb Z}$ and the zero operator $\bf 0$. By the nature of these operators,
it is not very hard to see that they satisfy the following relations:
\begin{equation} \label{eq:relop_partition}
\begin{array}{rcrclcll}
(1)&&\u{r}^2&=&{\bf 0}&&\\
(2)&&\u{r}\u{r+1}\u{r} = \u{r+1}\u{r}\u{r+1}&=&{\bf 0}&&\\
(3)&&\u{r}\u{t}&=&\u{s}\u{r}&& \hbox{if $|r-t|>1$}.\hfill\\
\end{array}  
 \end{equation}
 These relations can be understood graphically. The first relation states that once we add a cell in a given diagonal, if we try to add a second cell in the same diagonal we will not get a partition:
$$ \begin{tikzpicture} 
 \node (1) at (0,0) {$ \tikztableautiny{{ },{ , , X}}$}; 
 \node (2) at (.5,.17) {$ \tikztableautiny{{\boldentry  X}}$}; 
 \end{tikzpicture}  \, . $$
  The second relation states that if we add two consecutive cells in a row (or column) and if we try to add a third cell in the same diagonal as the first added cell we will not get a partition:
$$ \begin{tikzpicture} 
 \node (1) at (0,0) {$ \tikztableautiny{{ },{ ,X, X}}$}; 
 \node (2) at (.25,.17) {$ \tikztableautiny{{\boldentry X}}$}; 
 \end{tikzpicture}  \, .$$
 The third relation states that we can add  two cells independently in diagonals that are far from each other:
$$ \tikztableautiny{{\boldentry X},{ },{ , , ,\boldentry X}} \, .$$

\begin{proposition}\label{prop:relY}
 ${\mathcal M}\langle\u{r}\rangle$ is the monoid freely generated by the $\u{r}$ for $r\in{\mathbb Z}$ and $\bf 0$
 modulo the relations~\eqref{eq:relop_partition}.
\end{proposition}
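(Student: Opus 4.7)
The proposition asserts both that the relations (1), (2), (3) hold among the operators $\u{r}$ and that they are the only such relations; that is, ${\mathcal M}\langle\u{r}\rangle$ is the quotient of the free monoid on $\{\u{r}:r\in{\mathbb Z}\}\cup\{{\bf 0}\}$ by (1), (2), (3). The ``easy'' direction is essentially the graphical argument immediately preceding the statement. I would formalize it by first proving a small lemma: in any partition $\lambda$, the addable cells have pairwise distinct diagonals, and any two of those diagonals differ by at least $2$. Relation (1) follows immediately, since after adding a cell at diagonal $r$ no addable cell at diagonal $r$ remains, and so does (3), since cells at diagonals $|r-t|>1$ apart are addable independently of each other. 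Relation (2) is then a short case analysis on how adjacent additions at diagonals $r,r+1$ eliminate any further diagonal-$r$ or diagonal-$(r+1)$ addability.

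The main content is the reverse direction: every operator identity among the $\u{r}$ is a consequence of (1), (2), (3). I would split this into two claims. \textbf{Claim A:} if a word $w$ does not reduce to ${\bf 0}$ modulo the relations, then $w$ acts nontrivially on some partition $\mu$. \textbf{Claim B:} if $w$ and $w'$ are two nonzero words inducing the same operator, then $w$ and $w'$ are related by a chain of applications of relation (3) alone. Together these imply injectivity of the natural map from the presented monoid to $\mathrm{End}({\mathbb Z}{\mathcal Y})$.

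Claim B is the cleaner of the two. Any nonzero application $w(\mu)=\lambda$ encodes a standard Young tableau $T$ of skew shape $\lambda/\mu$ in which the cell labeled $i$ sits on diagonal $r_i$. If $w$ and $w'$ both send $\mu$ to the same $\lambda$, the multisets of diagonals agree and the tableaux $T,T'$ of the common skew shape $\lambda/\mu$ are connected by a sequence of elementary swaps, each exchanging the labels $i,i+1$ on a pair of cells incomparable in the cell-containment poset of $\lambda/\mu$. By the lemma above, incomparable cells in such a poset have diagonals differing by at least $2$; each elementary swap therefore corresponds to an application of relation (3) on adjacent letters $r_i,r_{i+1}$ of the word, producing the desired chain of (3)-commutations from $w$ to $w'$.

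Claim A is where I expect the main obstacle to lie. It demands that any word containing no zero-pattern — no consecutive $\u{r}\u{r}$ and no consecutive $\u{r}\u{r\pm 1}\u{r}$ or $\u{r\pm1}\u{r}\u{r\pm1}$, even after (3)-commutations — be realizable as a genuine chain in ${\mathcal Y}$ from some starting partition $\mu$. I would argue by induction on the length of $w$, building $\mu$ as a sufficiently large staircase padded with extra cells so that each required diagonal is addable at the right moment. At any putative failure, the missing diagonal $r_i$ must have been used at an earlier step $j<i$; relation (3) lets me commute $\u{r_i}$ leftward past every intervening letter whose diagonal differs from $r_i$ by more than one. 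The letters that cannot be commuted past must involve $\u{r_i\pm 1}$, and the surrounding pattern then forces (1) or (2) to appear explicitly in the rewritten word, contradicting the assumption. The delicate point — and what I expect to be the real work — is controlling the possible cascades of blockages so that a (1) or (2) pattern is exposed in every case, not just the simplest ones.
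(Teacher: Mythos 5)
Your two-part architecture (Claim A: every class not congruent to $\bf 0$ acts nontrivially somewhere; Claim B: words with the same nonzero action are related by commutations) is the same skeleton the paper uses when it reduces the proposition to Lemma~\ref{prop:opw}. Your Claim B, however, is argued by a genuinely different and self-contained route: the paper separates distinct nonzero operators via the encoding $v(\lambda)=v(\mu)w$ of Lemma~\ref{lem:op_mult}, together with cited facts about reduced words and $321$-avoiding permutations, whereas you observe that a nonzero chain from $\mu$ to $\lambda$ is a linear extension of the cell poset of $\lambda/\mu$, that incomparable cells of a skew shape have contents differing by at least $2$, and that adjacent transpositions connecting linear extensions are therefore instances of relation (3). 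That argument is correct and arguably more elementary than the paper's.

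The genuine gap is Claim A, which you rightly identify as the crux but do not prove, and whose sketch rests on a false step. You assert that at a putative failure ``the missing diagonal $r_i$ must have been used at an earlier step $j<i$.'' This is not so: no partition has addable cells on two adjacent diagonals, so whether diagonal $r_i$ is addable at step $i$ depends on the choice of the starting partition, not only on the history of the word. For the word $\u{0}\u{1}$ (apply $\u{1}$ first), the start $\mu=(1)$ fails at the second step even though diagonal $0$ was never used --- the content-$0$ cell is blocked by a missing content-$(-1)$ cell --- while $\mu=(1,1)$ succeeds. Hence a failure need not expose an instance of (1) or (2); it may only witness a bad choice of $\mu$, and ``a sufficiently large staircase padded with extra cells'' is not yet a construction. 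The paper's proof of Lemma~\ref{prop:opw}(a) consists precisely of the missing work: it builds $\mu$ and $\lambda$ simultaneously by induction on the length of the word, sliding a new cell down the diagonal $d=i_\ell$ to its first point of contact and analyzing the local configuration case by case, including the step where whole rows are pushed up their diagonals to create room. Until you supply a construction of comparable precision, or a rewriting argument that provably exposes a (1)- or (2)-pattern at every failure of a canonically chosen $\mu$, Claim A --- and with it the proposition --- remains unproved.
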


This is a consequence of a more general theorem and it can be shown using some very well known facts about the symmetric group and the combinatorics of partitions.
However to our knowledge this statement is not mentioned as such in the literature.
To see that the relations~\eqref{eq:relop_partition} generate all the relations of the monoid ${\mathcal M}\langle\u{r}\rangle$ requires a deeper understanding of the relations. We will sketch a proof here. Recall that the symmetric group is generated by simple reflections $s_r$ satisfying the braid relations:
\begin{equation} \label{eq:relsym_group}
\begin{array}{rcrclcll}
(1)&&s_r^2&=&Id&&\\
(2)&&s_rs_{r+1}s_r&=&s_{r+1}s_rs_{r+1}&&\\
(3)&&s_r s_t&=&s_ts_r&& \hbox{if $|r-t|>1$}.\hfill\\
\end{array}  
 \end{equation}
For a permutation $w$, the length $\ell(w)$ is the minimal number of generators $s_r$ necessary to express $w$ as a product of generators.
If $w=s_{i_1}s_{i_2}\cdots s_{\ell(w)}$, then we say that the word $s_{i_1}s_{i_2}\cdots s_{\ell(w)}$ is a reduced word for $w$. There is a small abuse of notation here: a reduced word is an element of the free monoid generated by the $s_r$'s. Here, we are studying  the equivalence classes of words modulo the relations~\eqref{eq:relsym_group}.
It is a well known fact that any two reduced words for a given permutation $w$ are connected together using only (2) and (3)  of the relations~\eqref{eq:relsym_group}. 
Moreover, if a word $s_{i_1}s_{i_2}\cdots s_{k}$ is not reduced, then at least one instance of the relation (1) of~\eqref{eq:relsym_group} will be used to reduce it  (see \cite{humphreys}). The set of equivalence classes of words that do not have any occurrence of $s_rs_{r+1}s_r$ are in bijection with 321-avoiding permutations. These are permutations $w$ with no $i<j<k$ such that $w(i)>w(j)>w(k)$ (see \cite{stan84}). 

Consider now the infinite group $\S_{\mathbb Z}$  of permutations of $\mathbb Z$ with only finitely many non-fixed points. This is the group generated by the simple reflections $s_r$ for $r\in{\mathbb Z}$ subject to the relations in ~\eqref{eq:relsym_group}. For $w\in \S_{\mathbb Z}$ we define the operator 
  $$ \u{w} = \u{i_1} \u{i_2}\cdots \u{i_{\ell(w)}} $$
where $s_{i_1}s_{i_2}\cdots s_{\ell(w)}$ is any reduced word for $w$. Comparing the relations~\eqref{eq:relsym_group} with the relations~\eqref{eq:relop_partition} we see that this is a well defined operator. Moreover, if $w$ is not 321-avoiding, then relation (2) of~\eqref{eq:relop_partition} gives $\u{w}=0$ and if $s_{i_1}s_{i_2}\cdots s_{k}$ is not a reduced word, then $\u{i_1} \u{i_2}\cdots \u{i_{k}} =0$. In order to show that the relations~\eqref{eq:relop_partition} generate all the relations of ${\mathcal M}\langle\u{r}\rangle$
it is enough to prove that
\begin{lemma}\label{prop:opw}\ 

\begin{enumerate}
\item[(a)] For each $w\in \S_{\mathbb Z}$ 321-avoiding, we have $\u{w}\ne 0$,
\item[(b)] For  $w,w'\in \S_{\mathbb Z}$ 321-avoiding, we have that $w\ne w'$ implies $\u{w}\ne \u{w'}$.
\end{enumerate}
\end{lemma}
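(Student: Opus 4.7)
The plan is to attach to each $321$-avoiding $w\in \S_{\mathbb Z}$ a pair of partitions $\mu(w)\subseteq\lambda(w)$ with $|\lambda(w)/\mu(w)|=\ell(w)$, built canonically from $w$, and to show $\u{w}(\mu(w))=\lambda(w)$. Part~(a) will be immediate, and for part~(b) I will argue that the placement of this skew shape in ${\mathbb Z}\times{\mathbb Z}$ is recovered from $w$, so two distinct $321$-avoiding permutations cannot yield the same operator.

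For (a), I would invoke the classical correspondence of Billey--Jockusch--Stanley between $321$-avoiding permutations of ${\mathbb Z}$ and certain positioned skew shapes in ${\mathbb Z}\times{\mathbb Z}$: the reduced words of such a $w$ biject with standard Young tableaux of $\lambda(w)/\mu(w)$, the bijection sending a SYT $T$ to the word whose $k$-th letter is the content $j-i$ of the cell of $T$ labeled $\ell-k+1$. Given any reduced word $(i_1,\ldots,i_\ell)$ of $w$ coming from some $T$, the composition $\u{i_1}\u{i_2}\cdots\u{i_\ell}$ applied to $\mu(w)$ adds the cells of $\lambda(w)/\mu(w)$ to $\mu(w)$ in the order prescribed by $T$, and each such addition is a legal cover in ${\mathcal Y}$ precisely because $T$ is a standard filling of a skew shape. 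Hence $\u{w}(\mu(w))=\lambda(w)\neq 0$.

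For (b), suppose $\u{w}=\u{w'}$ with $w,w'$ both $321$-avoiding. Then $\ell(w)=\ell(w')$, and evaluating at $\mu(w)$ gives $\u{w'}(\mu(w))=\lambda(w)$. Any reduced word $(j_1,\ldots,j_\ell)$ of $w'$ therefore produces, by the same successive-addition argument, a standard filling of the same skew shape $\lambda(w)/\mu(w)$ sitting in the same position inside ${\mathbb Z}\times{\mathbb Z}$. Running the BJS bijection backwards on this SYT shows that $(j_1,\ldots,j_\ell)$ is in fact a reduced word of $w$, so $w'=w$, contradicting the assumption.

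The main obstacle I expect is verifying that the BJS dictionary between $321$-avoiding permutations in the infinite group $\S_{\mathbb Z}$ and positioned skew shapes in ${\mathbb Z}\times{\mathbb Z}$ is well-defined: that the pair $(\mu(w),\lambda(w))$ does not depend on the reduced word chosen (which uses the fact, already recalled in the excerpt, that any two reduced words of a $321$-avoiding permutation are connected using only the commutation moves~(3) of~\eqref{eq:relsym_group}), and that conversely the concrete placement of $\lambda(w)/\mu(w)$ inside ${\mathbb Z}\times{\mathbb Z}$---not merely the abstract skew shape---pins down $w$. Once this dictionary is established, both (a) and (b) are bookkeeping on standard tableaux.
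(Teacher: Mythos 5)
Your proposal is correct and reaches the same destination, but it routes the work differently from the paper. For part (a) the paper does not cite the Billey--Jockusch--Stanley dictionary as a black box; it reproves the relevant direction from scratch, by induction on $\ell(w)$, constructing a concrete pair of partitions $\mu\subseteq\lambda$ with $\u{w}(\mu)=\lambda$ together with a minimal, well-positioned embedding of the skew shape in the first quadrant. That embedding step is exactly the ``main obstacle'' you flag at the end: the shape attached to $w$ is determined by the contents only up to translation along diagonals, and the paper's induction --- including the case where the whole diagram must be shifted up its diagonals to make room for a new cell --- is precisely the verification that honest partitions $\mu\subseteq\lambda$ realizing it exist. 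So what you defer to the literature is the bulk of the paper's argument; the paper itself remarks that these facts ``are known in some different form \dots and are not trivial'' and proves them for completeness. For part (b) the two arguments genuinely differ: you run the tableau/reduced-word bijection backwards on the standard filling produced by a reduced word of $w'$, whereas the paper encodes each partition $\lambda$ as a boundary word $v(\lambda)\in\S_{\mathbb Z}$ and uses Lemma~\ref{lem:op_mult} (a cover $\mu\stackrel{r}{\longrightarrow}\lambda$ forces $v(\lambda)=v(\mu)s_r$) to conclude $v(\lambda)=v(\mu)w$ whenever $\u{w}(\mu)=\lambda$; injectivity of $w\mapsto\u{w}$ then falls out of cancellation in the group, with no need for the converse direction of the correspondence. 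The paper's route to (b) is the more economical; your route to (a) is shorter only because its hard combinatorial content is outsourced to the cited bijection.
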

This will indeed show that the map from the free monoid generated by the $\u{r}$'s modulo the relations~\eqref{eq:relop_partition}  to ${\mathcal M}\langle\u{r}\rangle$ has no kernel and is surjective. 
These results are known in some different form (see \cite{BJN,Stemb}) and are not trivial. We will provide a proof here in this context  for completeness.

Let us start with the lattice ${\mathcal Y}$ and its labelled covers. It is possible to encode this lattice and its covers with a subset of the 321-avoiding permumation in $ \S_{\mathbb Z}$. Given a partition $\lambda$, add the two positive $x$-$y$ axis. We put the numbers $\ldots, -3,-2,-1,0$ for every vertical step from infinity on the $y$-axis following the border of the partition. We put the numbers $1,2,3,\ldots$ one on each horizontal step from left to right. The example below describes this procedure better for $\lambda=(3,1)$,
$$
 \raise -0pt\hbox{ \begin{picture}(100,100)
 \rouge{   \put(0,0) {\line(1,0){60}}
               \put(0,0) {\line(0,1){40}} }
 \put(-1,20) {\line(1,0){2}}    \put(-1,40) {\line(1,0){2}}     \put(-1,60) {\line(1,0){2}}   
 \put(20,-1) {\line(0,1){2}}    \put(40,-1) {\line(0,1){2}}     \put(60,-1) {\line(0,1){2}}    \put(80,-1) {\line(0,1){2}}
 \put(0,40) {\line(0,1){40}}   \put(0,40) {\line(1,0){20}}  \put(20,40) {\line(0,-1){20}} \put(20,20) {\line(1,0){40}}
  \put(60,20) {\line(0,-1){20}} \put(60,00) {\line(1,0){40}}
  \put(-2,85){$\vdots$}    \put(105,0){$\ldots$}
  \put(-12,66){$\scriptstyle {-3}$}  
  \put(8,42){$\scriptstyle {1}$}  
  \put(-12,46){$\scriptstyle {-2}$}    \put(8,26){$\scriptstyle {-1}$}  
  \put(28,22){$\scriptstyle {2}$}    \put(48,22){$\scriptstyle {3}$}    \put(68,2){$\scriptstyle {4}$}  
  \put(54,6){$\scriptstyle {0}$}  
  \put(88,2){$\scriptstyle {5}$}     
      \end{picture}} 
$$
When we read the entries on the $y$-axis, then the outer boundary of $\lambda$ followed by the $x$-axis, we obtain a 321-avoiding permutation $v(\lambda)\in\S_{\mathbb Z}$ (the entries on the axis are fixed points). In the example above we get
$$v(\lambda)=( \cdots, -3,-2,1,-1,2,3,0,4,5,\cdots).$$
If we have a cover $\mu  \stackrel{r}{\longrightarrow}\lambda$, then the entry $v(\mu)(r)\le 0<v(\mu)(r+1)$.
Adding a box on the diagonal of content $r$ has the effect of interchanging these two entries in $v(\mu)$. We have shown the following:
\begin{lemma}\label{lem:op_mult}
 $$\mu  \stackrel{r}{\longrightarrow}\lambda \qquad \implies \qquad  v(\lambda)=v(\mu)s_r \quad \mbox{ and }\quad \ell(v(\lambda))=\ell(v(\mu))+1.$$
\end{lemma}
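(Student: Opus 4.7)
The plan is to pin down precisely where each step of the boundary of a given partition lands in the bi-infinite reading, then to observe that adding one corner cell merely swaps two particular adjacent entries of $v(\mu)$, and finally to deduce the length equality from the familiar ascent criterion for Coxeter groups.

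First, I would decompose the boundary reading row-by-row and column-by-column. Each row $k$ of $\lambda$ contributes a single V-step, namely the right edge of that row, carrying the label $1-k$; each column $m$ contributes a single H-step, namely the top of that column, carrying the label $m$. Under the normalization that the empty partition produces the identity permutation, every cell of $\lambda$ shifts the position of its row's V-step one slot later in the reading and the position of its column's H-step one slot earlier. Accumulating these single-cell shifts yields the closed-form formulas
\[
\mathrm{pos}_{\lambda}(V_k) \;=\; \lambda_k + 1 - k
\qquad\text{and}\qquad
\mathrm{pos}_{\lambda}(H_m) \;=\; m - \lambda'_m,
\]
where $V_k$ denotes the V-step of row $k$ and $H_m$ the H-step of column $m$.

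Next, I would compute the change in these positions when we add a cell at an addable corner $(i,j)$ of content $r$. Since only row $j$ lengthens (from $i-1$ to $i$) and only column $i$ heightens (from $j-1$ to $j$), all other rows and columns contribute the same V- or H-step at the same position in both $v(\mu)$ and $v(\lambda)$. Plugging into the formulas above, the V-step of row $j$ (label $1-j$) moves from position $r$ to position $r+1$, while the H-step of column $i$ (label $i$) moves from position $r+1$ to position $r$. Thus $v(\lambda)$ differs from $v(\mu)$ only in that the values at the adjacent positions $r$ and $r+1$ have been interchanged, which is precisely the identity $v(\lambda)=v(\mu)\,s_r$.

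The length statement then falls out immediately. By construction $v(\mu)(r)$ is the V-label $1-j$, hence $\le 0$, while $v(\mu)(r+1)$ is the H-label $i$, hence $\ge 1$; in particular $v(\mu)(r)<v(\mu)(r+1)$, so $s_r$ is a right ascent of $v(\mu)$. The standard Coxeter-group fact that $\ell(ws_r)=\ell(w)+1$ iff $w(r)<w(r+1)$ then gives $\ell(v(\lambda))=\ell(v(\mu))+1$. The main obstacle I anticipate is purely bookkeeping: one needs to fix a single consistent convention for how the bi-infinite sequence is indexed (so that $\emptyset$ maps to the identity) and to track signs in both position formulas so that the content of the added cell appears as the correct subscript of the simple reflection. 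Once the normalization is pinned down, the local change of boundary near the added cell is visibly a single adjacent transposition, and the length equality is automatic from the ascent criterion.
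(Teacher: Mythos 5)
Your proof is correct and follows essentially the same route as the paper, which simply observes that at an addable corner of content $r$ the boundary reading has $v(\mu)(r)\le 0 < v(\mu)(r+1)$ and that adding the box interchanges these two adjacent entries, whence $v(\lambda)=v(\mu)s_r$ and the length increases by one. Your explicit position formulas $\lambda_k+1-k$ and $m-\lambda'_m$ merely make the paper's one-line verification precise.
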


This lemma allows us to show Lemma~\ref{prop:opw} (b) if we know that $\u{w}\ne 0$. Indeed, if $\u{w}(\mu)=\lambda$, then the above lemma gives us that $v(\lambda)=v(\mu)w$. Hence if $w\ne w'$, then $v(\mu)w\ne v(\mu)w'$ and $\u{w}\ne \u{w'}$. 

Now, in order to prove  Lemma~\ref{prop:opw} (a)  we need to construct a partition $\mu$ such that
$\u{w}(\mu)=\lambda\ne 0$ for each 321-avoiding $w\in \S_{\mathbb Z}$.  When $\mu\subseteq\lambda$, we say that
the diagram $\lambda/\mu$ obtained by removing the cells of $\mu$ from $\lambda$ is a skew diagram.
For  $w\in \S_{\mathbb Z}$ that is 321-avoiding, we construct recursively on the length $\ell(w)$ a skew diagram $\lambda/\mu$ such that $\u{w}(\mu)=\lambda$. Moreover, if we read the content of the cells of $\lambda/\mu$, row by row, from left to right, starting at the bottom, then we get a sequence of integers $(j_1,j_2,\ldots,j_k)$ such that $s_{j_1}s_{j_2}\cdots s_{j_k}$ is a reduced word for $w$.  Finally,  if $(i,j)\in\mu$ and $(i+1,j)\not\in\mu$ and $(i,j+1)\not\in\mu$, then either $(i+1,j)\in\lambda$ or $(i,j+1)\in\lambda$ (see Example \ref{example2.4} below).

If $\ell(w)=0$, then the result is immediate as $\lambda/\mu=\emptyset/\emptyset$ does the trick. We assume that for all 321-avoiding permutations such that $\ell(w)<\ell$ we can construct $\lambda/\mu$ as above.
Let $w=s_{i_1}s_{i_2}\cdots s_{i_{\ell}}$ be a reduced expression for a 321-avoiding permutation of length $\ell(w)=\ell$. 
By induction hypothesis we assume we have constructed $\lambda/\mu$ for $w'=s_{i_1}s_{i_2}\cdots s_{i_{\ell-1}}$.
We can moreover assume that $(i_1,i_2,\ldots,i_{\ell-1})$ is the sequence of contents we read from $\lambda/\mu$.
We consider a cell on the diagonal of content $d=i_\ell$ sliding from infinity downward and stop at $(i,j)=(i,i+d)$ the first contact of either $\mu$, $\lambda/\mu$ or one of the $x$-$y$-axes. We claim that
      $$\mbox{if $(i-1,j-1)\in\lambda/\mu$, then both $(i-1,j)\in\lambda/\mu$ and $(i,j-1)\in\lambda/\mu$.}$$
In the sequence $(i_1,i_2,\ldots,i_{\ell-1})$, let $k$ be such that $(i_k,i_{k+1},\ldots,i_\ell)$ are the contents of the cells
in rows $i+1$ and up in $\lambda/\mu$. Since no cell of $\lambda/\mu$ is in column $j$ and up in row $i$ and up, we have that $i_{k'}<j-i-1=d-1$ for all $k\le k'\le\ell-1$. This means that $s_{i_{k'}}$ and $s_d$ commute for all $k\le k'\le\ell-1$. We have that 
\begin{equation}\label{eq:read}
s_{i_1}s_{i_2}\cdots s_{i_{\ell}}=s_{i_1}s_{i_2}\cdots s_d s_{i_k}\cdots s_{i_{\ell-1}}.
\end{equation}
Now suppose $(i,j-1)\not\in\lambda/\mu$. This means that $s_d$ commutes with all $s_{c}$ where $c$ is the content of cells in row $i$
of $\lambda/\mu$ and all cells of content $e$ in row $i-1$ and column $j'>j+1$. We depict this as follows
$$
\begin{tikzpicture} 
 \node (1) at (0,0) {$ \tikztableausmall{{,c },{X,,, d}}$}; 
 \node (2) at (1.56,0.02) {$ \tikztableausmall{{\boldentry X},{ X, ,e}}$}; 
 \end{tikzpicture} 
$$
where the dark cell corresponds to the added cell in position $(i,j)$ of content $d$. Since the cell $(i,j-1)\not\in\lambda/\mu$ all cells in row $i$ have content $c<d-1$. The cells in row $i-1$ and column $j'>j$ have content $e>d+1$.
If $(i,j-1)\not\in\lambda/\mu$, then we \ get $s_ds_d$ in the reduced expression of $w$, a contradiction. If in addition $(i-1,j)\in\lambda/\mu$, then we get $s_ds_{d+1}s_d$ which contradicts  the fact that $w$ is 321-avoiding. Hence we must have that $(i,j-1)\in\lambda/\mu$. Now if we assume that $(i-1,j)\not\in\lambda/\mu$ and $(i,j-1)\in\lambda/\mu$ the picture is now
$$ \tikztableausmall{{,c,,, \boldentry X},{X,,, d}}\ .$$
The cell in position $(i-1,j-1)$ has content $d$. All cells in row $i$ and column $j'<j-1$ have content $c<d-1$. This time we can move the reflection $s_d$ corresponding to the cell in position $(i-1,j-1)$ to pass the $s_c$ in row $i$ up to $s_{d-1}s_d$. Again we get a contradiction as $s_ds_{d-1}s_d$ cannot occur in the reduced word of a 321-avoiding permutation $w$. This concludes the case when $(i-1,j-1)\in\lambda/\mu$. In this case we simply add the cell $(i,j)$ to $\lambda$ and not to $\mu$. The diagram $(\lambda\cup(i,j))/\mu$ is a skew shape with all the desired properties and the right hand side of~\eqref{eq:read} is the reduced word of $w$ that we read from this diagram.

We now consider the case where $(i-1,j-1)\in\mu$ or falls outside the first quadrant. If both $(i-1,j)\in\lambda/\mu$ and $(i,j-1)\in\lambda/\mu$, then again the diagram $(\lambda\cup(i,j))/\mu$ is a skew shape with all the desired properties and the right hand side of~\eqref{eq:read} is the reduced word of $w$ that we read from this diagram. By induction hypothesis, it is not the case that both $(i-1,j)\not\in\lambda/\mu$ and $(i,j-1)\not\in\lambda/\mu$. If $(i-1,j)\in\lambda/\mu$ and $(i,j-1)\not\in\lambda/\mu$, then we move all
the boxes of $\lambda/\mu$ in row $r\ge i$ up each diagonal by 1 unit. This increases the size of $\lambda$ and $\mu$ proportionally but keeps the relative shape of $\lambda/\mu$ invariant along the diagonal lines. We then add the box $(i,j)$ to lambda and add all the boxes $(i',j)$ for $i'<i$ to both $\lambda$ and $\mu$. Graphically we have
$$
\lower24pt\hbox{$\begin{tikzpicture} 
 \node (1) at (0,0) {$ \tikztableausmall{{, },{X,X,X,X}}$}; 
 \node (2) at (1.56,0) {$ \tikztableausmall{{\boldentry X},{ X,X, }}$}; 
 \end{tikzpicture} $}
 \qquad\longrightarrow\qquad
 \tikztableausmall{{X,,},{X,X,X,X,\boldentry X},{X,X,X,X,X,X, }}
$$
The case where $(i-1,j)\not\in\lambda/\mu$ and $(i,j-1)\in\lambda/\mu$ is exactly transposed, interchanging the roles of row and column. In any case we obtain the desired skew shape $\lambda'/\mu'$ such that $\u{w}(\mu')=\lambda'\ne 0$.

\begin{example}\label{example2.4}
Let us illustrate the induction procedure involved in the proof of Lemma \ref{prop:opw} $(a)$. Start with $w=s_{3}s_{-3}s_4s_2$ and its skew shape as illustrated on the left hand side of the figure below. The induction step tells us that the operator $\u{ws_0}$ is not zero since $\u{ws_0}(\mu)=\lambda$ where $\mu=(6,4,4,3,1)$ and $\lambda = (6,6,5,4,2)$:
 $$  \tikztableausmall{{-3},{X,X,X,X,2},{X,X,X,X,3,4 },{X,X,X,X,X,X}}  \qquad\longrightarrow\qquad
    \tikztableausmall{{X,-3},{X,X,X,0},{X,X,X,X,2},{X,X,X,X,3,4 },{X,X,X,X,X,X}}  
 $$
\end{example}

\subsection{Pieri operators on Young lattice and symmetric functions}\label{subsection:ClassicPieri}

In the previous section, we obtained a very good understanding of the noncommutative monoid ${\mathcal M}\langle\u{r}\rangle$.
We now introduce a commutative algebra ${\bf B}\langle H_{k}\rangle$ that is isomorphic to the (Hopf) algebra of symmetric functions $Sym$.
The algebra ${\bf B}\langle H_{k}\rangle$ is generated by certain homogeneous series $H_k$ in the elements of ${\mathcal M}\langle\u{r}\rangle$.
This is using the Pieri operators theory as developed in~\cite{bmws} related to the multiplication of symmetric functions (see~\cite{macdonald}).  

There are several combinatorial Hopf algebras of interest for our study. As it turns out, $\mathcal Y$ is  intimately related to $Sym$.
The space of symmetric functions is well known to have different bases indexed by partitions. We refer the reader to~\cite{macdonald,sagan} for
more details about our presentation of $Sym$.
We use the standard notation for the common bases of $Sym$:
$h_\lambda$ for complete homogeneous;
$e_\lambda$ for elementary;
$m_\lambda$ for monomial;
and $s_\lambda$ for Schur functions.
For simplicity, we let $h_i$ and $e_i$ denote the
corresponding generators indexed by the partition $(i)$.

There is a correspondence between the representation theory of all symmetric groups and symmetric functions.
The multiplication and comultiplication in $Sym$ corresponds to some induction and restriction of representations.
In this identification, Schur functions encode irreducible representations. In particular we must have that the coefficients
$C_{\lambda,\mu}^\nu$ in 
\begin{equation}\label{eq:LR}
 s_\lambda s_\mu =\sum_v C_{\lambda,\mu}^\nu s_\nu 
 \end{equation}
  are non-negative integers. They count the multiplicity of an irreducible in certain induced representations.
  This shows the nonnegativity of the constants $C_{\lambda,\mu}^\nu $ but does not give us a combinatorial formula for them.
  One is interested in a positive combinatorial rule to describe these numbers. This combinatorial rule is classically known as the Littlewood-Richardson rule.
  A particular case of this rule is Pieri rule that describes the multiplication by $h_k$:
    $$s_\lambda h_k =\sum_{\nu/\lambda \mbox{ a $k$-row strip}} s_\nu $$
where a $k$-row strip is a diagram with $k$ cells in distinct columns. In terms of the lattice $\mathcal Y$ we have the following characterization of $k$-row strip.

\begin{lemma}\label{lem:path}
 $\nu/\lambda$ is a $k$-row strip if and only if there is a strictly increasing path of length $k$ in $\mathcal Y$ from $\lambda$ to $\nu$. Moreover, if such a path exists from $\lambda$ to $\nu$, then it is unique. 
\end{lemma}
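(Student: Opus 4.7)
The plan is to prove both directions and uniqueness in parallel by recognizing that every saturated chain $\lambda = \mu^{(0)} \to \mu^{(1)} \to \cdots \to \mu^{(k)} = \nu$ in $\mathcal{Y}$ adds exactly one cell per step, whose content is the label of the corresponding edge. A strictly increasing path is therefore the same datum as a total ordering of the cells of $\nu/\lambda$ by strictly increasing content, so uniqueness is automatic once I confirm that the cells of a $k$-row strip carry pairwise distinct contents. This last fact I would deduce by combining the horizontal-strip inequality $\lambda_j \geq \nu_{j+1}$ with the monotonicity of $\nu$ to rule out two cells of $\nu/\lambda$ lying on the same diagonal.

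For the implication ``strictly increasing path $\Rightarrow$ row strip,'' I would argue by contradiction: if two cells of $\nu/\lambda$ lay in the same column $i$, at rows $j_1 < j_2$, the partition-closure condition inherent in the cover relation forces $(i,j_1)$ to be added strictly before $(i,j_2)$, yet the contents of these two cells are in the opposite order from what a strictly increasing path requires.

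For the converse I would list the cells of $\nu/\lambda$ in order of strictly increasing content, equivalently in order of increasing column index, since within a row strip the row index is a weakly decreasing function of the column index (another direct consequence of $\lambda_j \geq \nu_{j+1}$ together with the monotonicity of $\nu$). I would then verify that adjoining the cells one at a time in this order produces a partition at every stage, which reduces to checking that when we add $(i,j)$ the neighbours $(i-1,j)$ (if $i>1$) and $(i,j-1)$ (if $j>1$) are already present. The cell $(i-1,j)$ is either in $\lambda$ or, being in $\nu/\lambda$ at a strictly smaller column, has been added at an earlier step. The cell $(i,j-1)$ cannot lie in $\nu/\lambda$, for otherwise $\nu/\lambda$ would contain two cells in column $i$, contradicting the row-strip hypothesis; hence it lies in $\lambda$.

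The main obstacle is this partition-closure check in the converse: one must simultaneously exploit the row-strip hypothesis (to place $(i,j-1)$ in $\lambda$) and the inductive growth of the chain (to place $(i-1,j)$ in $\mu^{(t-1)}$) at every stage. Once this is set up, the remaining content comparisons along rows and columns are direct from the definitions.
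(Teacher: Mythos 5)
Your argument is correct and follows essentially the same route as the paper: the direction from a row strip to a path adds the cells in order of increasing column (equivalently increasing content), and the reverse direction derives a contradiction from two cells of $\nu/\lambda$ in the same column, with uniqueness forced by the distinctness of the contents. The only difference is that you spell out the partition-closure check at each step of the chain, which the paper leaves implicit in the phrase ``we can add the cells one by one from left to right.''
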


\proof  If $\nu/\lambda$ is a $k$-row strip, then we can add the cells of $\nu/\lambda$ to $\lambda$ one by one from left to right. Since the cells are in distinct columns, they are in distinct diagonals as well. Adding them from left to right will give us the desired strictly increasing path from  $\lambda$ to $\nu$. Conversely,  if we have a strictly increasing path from $\lambda$ to $\nu$, then the cells of  $\nu/\lambda$ are in distinct diagonals. Assume two cells of $\nu/\lambda$ are in the same column as pictured bellow
$$\tikztableausmall{{X,X,X,X,A},{X,X,X,X, },{X,X,X,X,B, }} $$
The cell $A$ has content strictly smaller than the content of the cell $B$. In the path from $\lambda$ to $\nu$ the cell $A$ would be added before the cell $B$.
But this is a contradiction since when the cell $A$ is added without cell $B$ this would not be a partition. Hence, $\nu/\lambda$ is a $k$-row strip.
\qed

This allows us to reconstruct the multiplication by $h_k$ using operators on $\mathcal Y$. Let 
  $$H_k= \sum_{i_1<i_2<\ldots<i_k} \u{i_k}\cdots\u{i_2}\u{i_1}.$$
This is an infinite series of operators of degree $k$ in   ${\mathcal M}\langle\u{r}\rangle$. In view of Lemma~\ref{prop:opw}, no term in the series $H_k$ vanishes.
If one fixes $\lambda$, there are only finitely many paths of length $k$ from $\lambda$ in $\mathcal Y$. This means that $H_k\colon {\mathbb Z}{\mathcal Y}\to {\mathbb Z}{\mathcal Y}$ is a well defined operator. 

\begin{proposition}
   $$H_k = \sum_{\ell(\zeta)=k} \u{\zeta},$$
   where $\zeta$ runs over all permutations such that its disjoint cycle decomposition $\zeta=C_1C_2\cdots C_s$ has only cycles of the form $C_i=(a+b,  \ldots,a+1,a)$ for some $a,b\in{\mathbb Z}$
   and $b>0$.
\end{proposition}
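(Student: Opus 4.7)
The strategy is to convert the operator-side sum into a sum over permutations in $\S_{\mathbb Z}$ using Lemma~\ref{lem:op_mult}, then recognize the resulting permutations as exactly those described on the right-hand side. The plan has three main steps.

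First I would use Lemma~\ref{lem:op_mult} iteratively: for $i_1<\cdots<i_k$, each successful application of $\u{i_j}$ right-multiplies the associated permutation $v$ by $s_{i_j}$, so (when nonzero) the operator $\u{i_k}\cdots\u{i_1}$ sends $\mu$ to a partition whose associated permutation is $v(\mu)\cdot w$, where $w:=s_{i_k}s_{i_{k-1}}\cdots s_{i_1}$. Thus every term in the left-hand sum equals $\u{w}$, provided $s_{i_k}\cdots s_{i_1}$ is a reduced word for $w$ and $w$ is $321$-avoiding (so that Lemma~\ref{prop:opw} applies).

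The heart of the argument is identifying $w$. I would group the tuple $(i_1,\ldots,i_k)$ into maximal runs of consecutive integers $[a,a+b-1]$, with gaps of size $\ge 2$ between distinct runs. Since generators from different blocks commute, the word factors (up to reordering) as a product of blocks $s_{a+b-1}s_{a+b-2}\cdots s_a$. A direct computation tracking the image of each element of $\{a,a+1,\ldots,a+b\}$ shows that one such block equals the descending cycle $(a+b,a+b-1,\ldots,a+1,a)$ of length $b$ in $\S_{\mathbb Z}$. Combining blocks, $w=C_1C_2\cdots C_s$ is a product of cycles supported on pairwise disjoint integer intervals, and $\ell(w)=\sum b_i=k$, which confirms that $s_{i_k}\cdots s_{i_1}$ is reduced. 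The inverse map is explicit: given $\zeta=C_1\cdots C_s$ of the described form, the increasing listing of $\bigcup_i\{a_i,a_i+1,\ldots,a_i+b_i-1\}$ recovers the tuple.

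To invoke Lemma~\ref{prop:opw}, I would next check that every such $w$ is $321$-avoiding: on the support of a single cycle $C_i$ the one-line notation reads $(a+b,a,a+1,\ldots,a+b-1)$, a single peak followed by an increasing sequence, which has no $321$-pattern; and because distinct blocks occupy disjoint integer intervals arranged in increasing order, no $321$-pattern can straddle two blocks either. Lemma~\ref{prop:opw}(a) then yields $\u{w}\ne 0$, and Lemma~\ref{prop:opw}(b) gives distinctness, so the bijection becomes an equality of operators term by term. The main obstacle is the cycle identification $s_{a+b-1}\cdots s_a=(a+b,\ldots,a+1,a)$ together with pinning down the composition convention implicit in Lemma~\ref{lem:op_mult}; once that is in hand, commutation of disjoint blocks, $321$-avoidance, and reducedness via a length count follow mechanically.
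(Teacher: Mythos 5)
Your proposal is correct and follows essentially the same route as the paper: the paper's (much terser) proof likewise identifies the maximal consecutive runs in $i_1<\cdots<i_k$ with the descending cycles $(a+b,\ldots,a+1,a)$ and lets disjoint runs commute. Your added checks of $321$-avoidance, reducedness via the length count, and the appeal to Lemma~\ref{prop:opw} simply make explicit what the paper leaves implicit.
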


\begin{proof} It suffices to show that
  $$\u{i_k}\cdots\u{i_2}\u{i_1}  = \u{\zeta}$$
with $i_1<i_2<\ldots<i_k$ if and only if $\zeta$ decomposes into disjoint cycles of the form  $(a+b,  \ldots,a+1,a)$. The disjoint cycles  of $\zeta= s_{i_k}\cdots s_{i_2}s_{i_1} $
for $i_1<i_2<\ldots<i_k$ correspond to the consecutive segments $s_{a+b}\cdots s_{a+1} s_a = (a+b,  \ldots,a+1,a)$.
\end{proof}

Using Lemma~\ref{lem:path}
  $$ H_k(\lambda)=\sum_{\nu/\lambda \mbox{ a $k$-row strip}} \nu  \qquad \qquad \iff \qquad \qquad s_\lambda h_k =\sum_{\nu/\lambda \mbox{ a $k$-row strip}} s_\nu\,.$$
This implies that
  $$ H_b H_a(\lambda) = \sum_{\nu} d_{\lambda,(a,b)}^\nu \nu \qquad \qquad \iff \qquad \qquad s_\lambda h_a h_b =\sum_{\nu} d_{\lambda,(a,b)}^\nu s_\nu\,.$$
In particular, for all $\lambda$ we have  $H_b H_a(\lambda)=H_a H_b(\lambda)$ since $h_ah_b=h_bh_a$. Again the result below is derived from very classical results.

\begin{theorem}\label{thm:youngop}
 The algebra  ${\bf B}\langle H_{k}\rangle$ spanned by $\{H_1, H_2, H_3,\ldots\}$ is isomorphic to $Sym$.
\end{theorem}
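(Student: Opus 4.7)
The plan is to build an explicit algebra isomorphism $Sym \to {\bf B}\langle H_k\rangle$ by sending $h_k \mapsto H_k$. First I would note that since $Sym$ is the polynomial algebra ${\mathbb Z}[h_1,h_2,\ldots]$ freely generated by the $h_k$, any assignment $h_k \mapsto X_k$ into a commutative ring extends uniquely to a ring homomorphism. The commutativity assertion $H_aH_b=H_bH_a$ inside ${\mathcal M}\langle\u{r}\rangle$ (established just before the theorem via the identity $s_\lambda h_a h_b = s_\lambda h_b h_a$ combined with Lemma~\ref{lem:path}) therefore furnishes a well-defined surjective algebra homomorphism
$$\phi \colon Sym \twoheadrightarrow {\bf B}\langle H_k\rangle,\qquad h_k\longmapsto H_k.$$
Surjectivity is immediate, since ${\bf B}\langle H_k\rangle$ is by definition generated by the $H_k$.

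The only real content is injectivity. For this I would set up a second realization of $Sym$ as operators on ${\mathbb Z}{\mathcal Y}$. Identify ${\mathbb Z}{\mathcal Y}$ with $Sym$ via the Schur basis $\lambda\leftrightarrow s_\lambda$, and for $f\in Sym$ let $M_f$ denote the operator ``multiplication by $f$'' transported through this identification. The assignment $f\mapsto M_f$ is an algebra homomorphism by associativity of multiplication in $Sym$. The classical Pieri rule combined with Lemma~\ref{lem:path} gives
$$M_{h_k}(\lambda) \;=\; \sum_{\nu/\lambda\ \text{a $k$-row strip}} \nu \;=\; H_k(\lambda),$$
so $M_{h_k}=H_k$ for every $k$. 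By the universal property invoked above, it follows that $M_f = \phi(f)$ for every $f\in Sym$.

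Injectivity now follows by evaluating at the empty partition. If $\phi(f)=0$, then in particular
$$0\;=\;\phi(f)(\emptyset)\;=\;M_f(\emptyset)\;\longleftrightarrow\;f\cdot s_\emptyset\;=\;f,$$
so $f=0$ in $Sym$. Combining with surjectivity, $\phi$ is an isomorphism, proving ${\bf B}\langle H_k\rangle\cong Sym$.

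The only subtle point I foresee is making sure the operators $H_k$, which are defined as a priori \emph{infinite} sums in ${\mathcal M}\langle\u{r}\rangle$, really multiply in ${\bf B}\langle H_k\rangle$ the way the formal power series calculus suggests. This is handled by the observation made just before the statement: for any fixed $\lambda$ there are only finitely many $k$-row strips growable from $\lambda$, so each $H_k$ is a locally finite, hence well-defined, operator on ${\mathbb Z}{\mathcal Y}$; products $H_{k_1}H_{k_2}\cdots H_{k_s}$ applied to any $\lambda$ also yield finite sums, so there is no convergence issue when comparing $\phi$ with $f\mapsto M_f$. Once that bookkeeping is in place the argument above is essentially a formality.
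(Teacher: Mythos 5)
Your proof is correct, and its skeleton---commutativity of the $H_k$, surjectivity for free, injectivity by evaluation at the empty partition---matches the paper's. The genuine divergence is in how the two steps are carried out. For commutativity, you only establish that $H_aH_b$ and $H_bH_a$ agree \emph{as operators} on ${\mathbb Z}{\mathcal Y}$, which does follow from their agreement on every basis element $\lambda$; the paper insists on the stronger term-by-term equality of the two series in their $\u{w}$-expansions, and for that it invokes a nontrivial bijection on reduced words (Stanley, Bergeron--Sottile, via jeu-de-taquin). Since ${\bf B}\langle H_{k}\rangle$ is an algebra of operators, your weaker statement suffices for the theorem as stated; moreover, Lemma~\ref{prop:opw} makes the distinct nonzero $\u{w}$ linearly independent as operators (for fixed $\mu$ the nonzero images $\u{w}(\mu)$ are pairwise distinct partitions), so the series-level identity is in fact recoverable from the operator identity---your shortcut loses nothing, though you should phrase the conclusion as equality of operators rather than equality ``inside ${\mathcal M}\langle\u{r}\rangle$''. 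For injectivity, your identification of ${\mathbb Z}{\mathcal Y}$ with $Sym$ via $\lambda\leftrightarrow s_\lambda$ and the evaluation $\phi(f)(\emptyset)=f$ is cleaner than the paper's appeal to $H_\mu(\emptyset)=\mu$: taken literally the latter is only the leading term, since the Pieri rule gives $H_\mu(\emptyset)=\sum_\lambda K_{\lambda\mu}\,\lambda$ with $K$ the Kostka matrix, so the paper's independence claim really rests on unitriangularity of $K$, a point your argument sidesteps entirely. Your closing remark on local finiteness of the $H_k$ correctly disposes of the only convergence issue.
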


\proof
We have seen above that $H_b H_a(\lambda)=H_a H_b(\lambda)$, but to see that the product of series $H_b H_a=H_a H_b$ requires a little bit more argument.
As we multiply $H_aH_b$ and $H_bH_a$, some terms will go to zero and others will survive. The terms that survive in  $H_aH_b$  are  of the form 
 $$ \u{w} = \u{i_1}\u{i_2}\cdots\u{i_a}\u{j_1}\u{j_2}\cdots\u{j_b}$$
 where $w$ is 321-avoiding, $i_1<i_2<\cdots<i_a$ and $j_1<j_2<\cdots<j_b$. Showing that $H_aH_b=H_bH_a$ requires the construction of a bijection between the possible reduced 
 expressions of $w=s_{i_1}\cdots s_{i_a}s_{j_1}\cdots s_{j_b}$ and the ones of the form $w=s_{j'_1}\cdots s_{j'_b}s_{i'_1}\cdots s_{i'_b}$ where $i'_1<i'_2<\cdots<i'_a$ and $j'_1<j'_2<\cdots<j'_b$.  This is done in~\cite{stan84} and also in~\cite{bsottileskew} using jeu-de-taquin. We then have that $\big\{H_\mu=H_{\mu_1}H_{\mu_2}\cdots H_{\mu_1}: \mu \mbox{ partition}\big\}$ spans ${\bf B}\langle H_{k}\rangle$.
To see that the $H_\mu$ are linearly independent, it suffices to remark that
  $$H_\mu(\emptyset)=\mu$$
  hence they have distinct values on $\emptyset$.
\qed

\begin{remark} Theorem~\ref{thm:youngop} follows easily from the more general Theorem~1.1 of~\cite{FG98}. The approach of Fomin and Greene has the advantage 
that one does not need to fully have all the relations of the $\u{r}$. It is enough to show that they satisfy the relations:
\begin{equation} \label{eq:relFG}
\begin{array}{rcrclcll}
(1)&&\u{r}\u{t}&=&\u{s}\u{r}&& \hbox{if $|r-t|>1$},\hfill\\
(2)&&(\u{r}+\u{r+1})\u{r+1}\u{r}&=&\u{r+1}\u{r}(\u{r}+\u{r+1})&& \hfill\\
\end{array}  
 \end{equation}
 It is clear that our operators $\u{r}$ satisfy the relations~\eqref{eq:relFG}.
 In later sections we will give examples 
where Fomin and Greene theory is not applicable.
\end{remark}

\subsection{NSym and QSym}

For the theory of Pieri operators as developed in~\cite{bmws} we need to introduce two graded dual Hopf algebras.
First, the algebra of non-commutative symmetric functions $Nsym$ is a non-commutative analogue of $Sym$ that arises by
considering an algebra with one non-commutative generator at each positive
degree. 
We define $Nsym$ as the algebra with generators $\{\HH_1, \HH_2, \dots \}$ and 
no relations. Each generator $\HH_i$ is defined to be of degree $i$, 
giving $Nsym$ the structure of a graded algebra. We let $Nsym_n$ denote the 
graded component of $Nsym$ of degree $n$. A basis for $Nsym_n$ is given by the set of
\textit{complete homogeneous functions} 
$\{\HH_\alpha := \HH_{\alpha_1} \HH_{\alpha_2} \cdots \HH_{\alpha_m}\}_{\alpha \vDash n}$ 
indexed by compositions $\alpha$ of $n$.  

We have the projection morphism $\chi\colon Nsym \to Sym$ defined by sending the basis element  
$\HH_\alpha$ to the complete homogeneous symmetric function 
 $$\chi(\HH_\alpha) := h_{\alpha_1} h_{\alpha_2} \cdots h_{\alpha_{\ell(\alpha)}}$$
 and extended linearly to all of $Nsym$. 
A second basis of $NSym$ is given by the $R_\alpha$, usually called the \textit{ribbon basis}. The $R_\alpha$ are defined by 
\begin{equation}\label{def:Ribbon} R_\alpha = \sum_{\beta \geq \alpha} (-1)^{\ell(\alpha)-\ell(\beta)} \HH_\beta
\hskip .2in \textrm{or equivalently} \hskip .2in \HH_\alpha = \sum_{\beta \geq \alpha} R_\beta. \end{equation}

The product expansion follows easily from the non-commutative product on the generators 
\[\HH_\alpha \HH_\beta = 
\HH_{\alpha_1, \ldots \alpha_{\ell(\alpha)},\beta_1, \ldots \beta_{\ell(\beta)}}~.\]
$Nsym$ has a coalgebra structure, which is defined on the generators by
\[ \Delta( \HH_j ) = \sum_{i=0}^j \HH_i \otimes \HH_{j-i}~. \]
This determines the action of the coproduct on the basis $\HH_\alpha$ since the
coproduct is an algebra morphism with respect to the product. 

Second, the Hopf algebra of quasi-symmetric functions, $Qsym$ is dual to $Nsym$ and contains $Sym$ as a subalgebra. 
The  graded  component  $Qsym_n$  is  indexed  by  compositions  of  $n$. 
This algebra is most readily realized within the 
ring of power series of bounded degree 
$\mathbb{Q}[\![x_1, x_2, \dots]\!]$. The monomial 
quasi-symmetric function indexed by a composition $\alpha$ is defined as
\begin{equation}
    \label{monomial-qsym}
    M_\alpha = \sum_{i_1 < i_2 < \cdots < i_m} x_{i_1}^{\alpha_1} x_{i_2}^{\alpha_2} \cdots x_{i_m}^{\alpha_m}.
\end{equation}
The algebra of quasi-symmetric functions, $Qsym$, can then be defined as the algebra with the 
monomial quasi-symmetric functions as a basis, whose multiplication is inherited as a subalgebra of $\mathbb{Q}[\![x_1, x_2, \dots]\!]$.
We define the coproduct on this basis as:
\[ \Delta(M_\alpha) = \sum_{S \subset \{1,2, \ldots, \ell(\alpha)\}} M_{\alpha_{S}} \otimes M_{\alpha_{S^c}},\]
where if $S=\{ i_1 < i_2 < \cdots < i_{|S|}\}$, then 
$\alpha_S = [\alpha_{i_1}, \alpha_{i_2}, \ldots, \alpha_{i_{|S|}}]$.

We view $Sym$ as a subalgebra of $Qsym$. In fact, the usual monomial symmetric functions $m_\lambda \in Sym$
expand positively in the quasi-symmetric monomial functions :
\[ m_\lambda = \sum_{sort(\alpha) = \lambda} M_\alpha,\]
where $sort(\alpha)$ is the partition obtained by organizing the parts of $\alpha$ from the largest to the smallest.

The \textit{fundamental quasi-symmetric functions}, denoted by $F_\alpha$ form another basis of $Qsym_n$ and are defined by their expansion
in the monomial quasi-symmetric basis: 
\[F_\alpha = \sum_{\beta \leq \alpha} M_\beta.\]

The algebras $Qsym$ and $Nsym$ form graded dual Hopf algebras. The monomial basis of $Qsym$ is dual in this context to the complete homogeneous basis of $Nsym$, and the fundamental basis of $Qsym$ is dual to the ribbon basis of $Nsym$.
$Nsym$ and $Qsym$ have a pairing $\langle \cdot, \cdot \rangle: Nsym \times Qsym \to \mathbb{Q}$, defined under this duality as either $\langle \HH_\alpha, M_\beta \rangle = \delta_{\alpha, \beta}$, or $\langle R_\alpha, F_\beta \rangle = \delta_{\alpha, \beta}$.

\subsection{Skew function $K_{[\lambda,\nu]}$} \label{sec:K}
Associated to any $\lambda\subseteq\nu$ in $\mathcal Y$, we construct a quasisymmetric function $K_{[\lambda,\nu]}$ following the notion of Pieri operators as developed in~\cite{bmws}. Let $\langle \lambda,\mu \rangle=\delta_{\lambda,\mu}$ define a scalar product on ${\mathbb Z}{\mathcal Y}$.
Using the operators $H_k$ on ${\mathbb Z}{\mathcal Y}$ we can define
  $$K_{[\lambda,\nu]} = \sum_\alpha \langle H_\alpha(\lambda),\nu\rangle M_\alpha. $$
In view of the commutation relation $H_aH_b=H_bH_a$, the function $K_{[\lambda,\nu]} $ is not only  quasisymmetric but symmetric as well. Indeed
since $H_\alpha=H_{sort(\alpha)}$ and since $m_\lambda = \sum_{sort(\alpha) = \lambda} M_\alpha$,  we have that
  $$K_{[\lambda,\nu]} = \sum_\mu \langle H_\mu(\lambda),\nu\rangle m_\mu $$
is symmetric. We are interested in knowing the coefficients of $K_{[\lambda,\nu]}$ when expanded in different bases. 
We remark that we have an action of $NSym$ on ${\mathbb Z}{\mathcal Y}$ given by $\HH_{\alpha}.\lambda = H_\alpha(\lambda)$.
In this case the action factors through the projection $\chi\colon NSym\to Sym$. As observed earlier, the basis $\HH_\alpha$ of $NSym$ is dual to the basis $M_\alpha$ of $QSym$.
A straightforward computation shows that
  $$K_{[\lambda,\nu]} = \sum_\alpha \langle \HH_\alpha.\lambda,\nu\rangle M_\alpha  = \sum_\alpha \langle {\bf x}_\alpha.\lambda,\nu\rangle Y_\alpha.$$
for any dual basis ${\bf x}_\alpha$ and $Y_\alpha$ of $NSym$ and $QSym$ respectively. We thus have that

\begin{theorem} \label{thm:Kmulambda}
$$K_{[\lambda,\nu]} = \sum_\alpha \langle R_\alpha.\lambda,\nu\rangle F_\alpha =  \sum_\mu C_{\lambda,\mu}^\nu s_\mu$$
where $C_{\lambda,\mu}^\nu$ is given in~\eqref{eq:LR}. Moreover for $\alpha=(\alpha_1,\ldots,\alpha_k)$ a composition of $n$, we have that
$\langle R_\alpha.\lambda,\nu\rangle$ counts the number of paths in $\mathcal Y$ from $\lambda$ to $\nu$ with labels $i_1,i_2,\ldots,i_n$ such that
$i_r>i_{r+1}$ if and only if $r\in D(\alpha)=\{\alpha_1,\alpha_1+\alpha_2,\ldots,n-\alpha_k\}$.
\end{theorem}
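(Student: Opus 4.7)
My plan is to establish the three parts of Theorem~\ref{thm:Kmulambda} in order: the first equality $K_{[\lambda,\nu]} = \sum_\alpha \langle R_\alpha.\lambda,\nu\rangle F_\alpha$, the Schur expansion into Littlewood--Richardson coefficients, and the descent-set interpretation of $\langle R_\alpha.\lambda,\nu\rangle$. The first equality is essentially automatic: it is the instance $({\bf x}_\alpha,Y_\alpha)=(R_\alpha,F_\alpha)$ of the dual-basis identity $K_{[\lambda,\nu]}=\sum_\alpha \langle {\bf x}_\alpha.\lambda,\nu\rangle Y_\alpha$ displayed just before the theorem, which itself follows by a short change-of-basis using \eqref{def:Ribbon} and $F_\alpha=\sum_{\beta\leq\alpha}M_\beta$.

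For the Schur expansion, I would show that the $Nsym$-action on $\mathbb{Z}\mathcal{Y}$ factors through the projection $\chi\colon Nsym\to Sym$ and that, under the identification $\lambda\leftrightarrow s_\lambda$, it coincides with right-multiplication in $Sym$. The Pieri rule $s_\lambda h_k=\sum_{\nu/\lambda\ \text{a $k$-row strip}}s_\nu$ combined with the identity $H_k(\lambda)=\sum_{\nu/\lambda\ \text{a $k$-row strip}}\nu$ from Lemma~\ref{lem:path} establishes this match on the generators $h_k$, and since the $h_k$ generate $Sym$ it extends to every $f\in Sym$, giving $\langle f.\lambda,\nu\rangle=\langle s_\lambda f,s_\nu\rangle$ with respect to the Hall inner product. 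Taking $f=s_\mu$ produces $\langle s_\mu.\lambda,\nu\rangle=C^\nu_{\lambda,\mu}$, and since the Schur basis is self-dual under the Hall pairing, restricting the general dual-basis formula to $Sym\subset Qsym$ gives $K_{[\lambda,\nu]}=\sum_\mu C^\nu_{\lambda,\mu}s_\mu$.

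For the descent interpretation, I expand using \eqref{def:Ribbon}:
$$\langle R_\alpha.\lambda,\nu\rangle = \sum_{\beta\geq\alpha}(-1)^{\ell(\alpha)-\ell(\beta)}\langle H_\beta(\lambda),\nu\rangle.$$
The key step is to verify that $\langle H_\beta(\lambda),\nu\rangle$ equals the number of paths $\lambda\to\nu$ in $\mathcal{Y}$ whose descent set is contained in $D(\beta)$. Since the $H_{\beta_i}$ pairwise commute, applying $H_\beta$ corresponds to a decomposition of $\nu/\lambda$ into a sequence of row strips of sizes $\beta_1,\ldots,\beta_{\ell(\beta)}$; each such row strip is, by Lemma~\ref{lem:path}, a strictly increasing run of diagonal labels, so the descents of the resulting path are confined to positions of $D(\beta)$. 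In the paper's refinement order one has $\beta\geq\alpha\iff D(\beta)\subseteq D(\alpha)$ together with $\ell(\alpha)-\ell(\beta)=|D(\alpha)|-|D(\beta)|$, so the alternating sum above becomes standard Möbius inversion on the Boolean lattice of subsets of $\{1,\ldots,n-1\}$, and it recovers the count of paths whose descent set is exactly $D(\alpha)$.

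The main obstacle I anticipate lies in the middle step: promoting the generator-level match $H_k\leftrightarrow h_k$ to a genuine $Sym$-module isomorphism $\mathbb{Z}\mathcal{Y}\cong Sym$ via $\lambda\mapsto s_\lambda$, so that $C^\nu_{\lambda,\mu}$ may be read off as a matrix coefficient. This relies on the commutativity $H_aH_b=H_bH_a$ from Theorem~\ref{thm:youngop} to pass from generators to arbitrary $f\in Sym$, together with the linear independence of Schur functions to separate coefficients. A secondary care is keeping the refinement conventions on compositions consistent so that the Möbius inversion in the third step runs in the correct direction.
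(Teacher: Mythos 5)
Your proposal is correct and follows essentially the same route as the paper's proof: the first equality by duality of the ribbon and fundamental bases, the Schur expansion by matching $H_k$ with the Pieri rule and then invoking the $h$--$m$ duality and self-duality of the Schur basis in $Sym$, and the descent-set count by inclusion--exclusion from the $\HH_\beta$ path counts. You simply fill in more detail (the module-isomorphism framing and the explicit M\"obius inversion) than the paper's terser write-up.
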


\proof The first equality follows from duality between the $R_\alpha$ and the $F_\alpha$. For the second equality, from the definition of $H_k$ we remark that for
$$K_{[\lambda,\nu]} = \sum_\mu \langle H_\mu(\lambda),\nu\rangle m_\mu $$
the coefficient $ \langle H_\mu(\lambda),\nu\rangle = d_{\lambda,\mu}^\nu$ is the coefficient of $s_\nu$ in the product $s_\lambda h_\mu$.
In $Sym$, the basis $h_\mu$ and $m_\mu$ are dual and the basis $s_\mu$ is self dual. Hence the coefficient of $s_\mu$ in $K_{[\lambda,\nu]}$
is the same as the coefficient of $s_\nu$ in $s_\lambda s_\mu$. 

The fact that $\langle R_\alpha.\lambda,\nu\rangle$ counts the paths as described follows from a simple inclusion-exclusion argument and the fact that by definition
$\langle \HH_\alpha.\lambda,\nu\rangle$ counts the paths in $\mathcal Y$ from $\lambda$ to $\nu$ with labels $i_1,i_2,\ldots,i_n$ such that
$i_r>i_{r+1}$ only if $r\in D(\alpha)=\{\alpha_1,\alpha_1+\alpha_2,\ldots,n-\alpha_k\}$.
\qed

\begin{remark} The function $K_{[\lambda,\nu]}$ in Theorem~\ref{thm:Kmulambda} is the well known skew-Schur function $s_{\nu/\lambda}$. It is denoted $F_{\nu/\lambda}$
by Fomin and Greene in~\cite{FG98}. Theorem~1.2 of~\cite{FG98} shows that the coefficients $C_{\lambda,\mu}^\nu$ are positive and count
 paths in $\mathcal Y$ satisfying a precise rule. This is a very powerful method that works for any monoid of operators $\u{r}$ satisfying the relations~\eqref{eq:relFG}.
 Several classical examples are solved by this theory which gives a method to understand the coefficients we are interested in. This includes the weak order of the symmetric group 
 and the Stanley symmetric function $F_{w/u}$ originally defined in~\cite{stan84}.
There are many new situations where  Fomin and Greene theory cannot be applied and we will give some examples of this in the next sections.
\end{remark}


\section{Schubert vs Schur }\label{sc:New}

We present an example of a monoid that does not satisfy  Fomin and Greene's conditions, yet it is interesting and still yields some symmetry and positivity.
In this example, which is taken from the theory of Schubert polynomials (see \cite{bsottileschub,LS82a,Macdonald91}), positivity results are highly non-trivial. We consider operators on the infinite symmetric group defined from Monk's rule.
From these operators one defines Pieri operators that mimic the multiplication of Schubert polynomials by symmetric functions. 
Symmetry follows from the commutativity of multiplication and positivity follows from geometry.
A combinatorial proof of positivity is much harder to obtain and was only recently achieved in~\cite{assafbsottile} using the techniques of~\cite{A_JAMS}.

Let $u\in {\mathcal S_{\infty}}:=\bigcup_{n\ge0} {\mathcal S}_n$ be an infinite permutation where all but a finite number of positive integers are fixed. Schubert polynomials $\mathfrak S_u$ are indexed by such permutations ~\cite{LS82a,Macdonald91}.
These polynomials form a homogenuous basis of the polynomial ring ${\mathbb Z}[x_1,x_2,\ldots]$ in countably many variables. The coefficients $c_{u,v}^w$ in
\begin{equation}\label{eq:genschub}
  {\mathfrak S}_u {\mathfrak S}_v = \sum_{w} c_{u,v}^w {\mathfrak S}_w ,
\end{equation}
are known to be positive from geometry. 

\subsection{Operators on the $r$-Bruhat order} 

We now define operators on the $r$-Bruhat order on $ {\mathcal S_{\infty}}$.
Let
$\ell(w)$ be the length of a permutation $w\in {\mathcal S}_\infty$.
We define the $r$-Bruhat order $<_r$ by its covers.
Given permutations $u,w\in{\mathcal S}_\infty$, we say that
$u\lessdot_r w$ if $\ell(u)+1=\ell(w)$ and
$u^{-1}w=(i,j)$, where $(i,j)$ is a reflection with $i\leq r<j$.

For $0<a<b$, let $\u{\a\b}$ denote the operator on ${\mathbb Z} \S_\infty$ defined by 
\begin{equation}\label{eq:operat}
\begin{array}{rcl}
\u{\a\b} \colon\  {\mathbb Z}\S_\infty&\longrightarrow& \quad{\mathbb Z}\S_\infty,\\
u\quad&\longmapsto&\ \ \rule{0pt}{28pt} \left\{\begin{array}{ll}
(\a\,\,\,\b)u   
&\mbox{ if } u\lessdot_r (\a,\b)u,
\medskip       
\\      0& \mbox{ otherwise.}\end{array}\right.
\end{array} 
\end{equation}
We have shown in~\cite{bsottilemonoid} that these operators satisfy  the following relations:
\begin{equation} \label{eq:relschub}
\begin{array}{clrclll}
(1)&&\u{\b\c}\u{\c\d}\u{\a\c}&\equiv&\u{\b\d}\u{\a\b}\u{\b\c},\hfill&&
        \hbox{if $\a<\b<\c<\d$},\hfill\\
(2)\hfill&& \hfill \u{\a\c}\u{\c\d}\u{\b\c}&\equiv&
\u{\b\c}\u{\a\b}\u{\b\d},\hfill&&
        \hbox{if $\a<\b<\c<\d$},\hfill\\
(3)\hfill&&  \hfill \u{\a\b}\u{\c\d}&\equiv&\u{\c\d}\u{\a\b}, \hfill&&
        \hbox{if $\b<\c$ or $\a<\c<\d<\b$},\hfill\\
(4)\hfill&&  \hfill   \u{\a\c}\u{\b\d}&\equiv& \u{\b\d}\u{\a\c}\ 
\equiv \  {\bf 0},\hfill&&
        \hbox{if $\a\le \b<\c\le\d$},\hfill\\
(5)\hfill&&  \hfill  \u{\b\c}\u{\a\b}\u{\b\c}
&\equiv&\u{\a\b}\u{\b\c}\u{\a\b}\ \equiv\ {\bf 0},\hfill&&
        \hbox{if $\a<\b<\c$}.
\end{array}  
 \end{equation}
The ${\bf 0}$ in relations (4) and (5) mean that no chain in any $r$-Bruhat order can contain such a sequence of transpositions.
On the other hand, relations (1), (2) and (3) are complete and transitively connect any two chains in a given interval $[u,w]_r$.
It is also interesting to notice that the relations are independent of $r$. This is a fact noticed in~\cite{bsottileschub}: a nonempty interval $[u,w]_r$ in the $r$-Bruhat order is isomorphic to a nonempty interval $[x,y]_{r'}$ in an $r'$-Bruhat order as long as $wu^{-1}=yx^{-1}$. It is important to remark that if one fixes $r$, there are in fact more relations than~\eqref{eq:relschub}.
We will clarify this after Proposition~\ref{prop:relSchub}. 
For the moment we assume that the operator $\u{ab}$ acts on the disjoint union of all $r$-Bruhat orders for $r>0$.
Let ${\mathcal M}\langle\u{ab}\rangle$ be the monoid generated by the $\bf 0$ operator and all operators $\u{ab}$ for $a<b$. A consequence of~\cite{bsottilemonoid}  is the following proposition.

\begin{proposition}\label{prop:relSchub}
 ${\mathcal M}\langle\u{ab}\rangle$ is the monoid freely generated by the $\u{ab}$ for $0<a<b\in{\mathbb Z}$ and $\bf 0$
 modulo the relations~\eqref{eq:relschub}.
\end{proposition}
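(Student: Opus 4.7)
The plan is to adapt the approach used for Proposition~\ref{prop:relY}. The easy direction is to verify directly from the definition~\eqref{eq:operat} that the operators $\u{\a\b}$ satisfy the relations~\eqref{eq:relschub}; this gives a canonical surjection $\phi\colon \mathcal{F} \twoheadrightarrow {\mathcal M}\langle\u{ab}\rangle$, where $\mathcal{F}$ is the free monoid on the generators $\u{\a\b}$ and $\bf 0$ modulo~\eqref{eq:relschub}. The content of the proposition is that $\phi$ is injective, i.e.\ that \eqref{eq:relschub} is a complete set of relations.

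To prove injectivity I would first give a combinatorial description of the nonzero classes in $\mathcal{F}$. Relations (4) and (5) force entire families of words to vanish; following~\cite{bsottileschub, bsottilemonoid}, call a word \emph{admissible} if it is the sequence of covering transpositions $u_0^{-1}u_1, u_1^{-1}u_2,\ldots,u_{k-1}^{-1}u_k$ read off some saturated chain $u_0 \lessdot_r u_1 \lessdot_r \cdots \lessdot_r u_k$ in some $r$-Bruhat order. The key structural fact from~\cite{bsottilemonoid} is that an equivalence class in $\mathcal{F}$ is nonzero precisely when it contains an admissible word, and that relations (1)--(3) transitively connect any two admissible words for the same interval $[u_0,u_k]_r$. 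Combined with the observation from~\cite{bsottileschub} that the isomorphism type of $[u_0,u_k]_r$ depends only on $\zeta := u_k u_0^{-1}$, this yields a clean model for the nonzero classes of $\mathcal{F}$.

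With this model in hand, the injectivity of $\phi$ reduces to two steps parallel to Lemma~\ref{prop:opw}. First, for every admissible word $\u{w}$ there must exist a choice of $r$ and a permutation $u$ with $\u{w}(u)\ne 0$, the nonvanishing statement. Second, two inequivalent admissible words must act differently: once $v = \u{w}(u)\ne 0$ is produced, the output $v$ together with the sequence of labels read along the chain recovers the interval $[u,v]_r$, and hence the equivalence class of $\u{w}$ in $\mathcal{F}$, so inequivalent words cannot yield the same operator. The second step is essentially bookkeeping given the structural result above.

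The main obstacle is the nonvanishing step: given an admissible sequence of transpositions, one must construct a concrete pair $(r, u)$ realizing the chain. I would proceed by induction on the length of the word. At each step, admissibility guarantees that the next transposition does not create a pattern forbidden by relations (4) or (5), and one uses the freedom, noted in~\cite{bsottileschub}, to enlarge $r$ and translate the current permutation so that the required cover relation is available. This is the precise analogue of the inductive construction of the skew shape $\lambda/\mu$ in the proof of Lemma~\ref{prop:opw}(a). Once these two steps are in place, $\phi$ has trivial kernel, which is the claim.
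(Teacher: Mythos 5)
Your overall architecture matches what the paper does: the paper offers no self-contained proof of Proposition~\ref{prop:relSchub}, stating it as ``a consequence of~\cite{bsottilemonoid}'' and then, in the paragraph following the statement, supplying exactly the two ingredients you identify --- a nonvanishing witness for each class and distinctness of the resulting operators --- as the analogue of Lemma~\ref{prop:opw}. Where you genuinely diverge is in how the witness is produced. The paper indexes the nonzero classes by permutations $\zeta\in\S_\infty$ and constructs $(r,u,w)$ \emph{globally} from $\zeta$: it sets $r=|up(\zeta)|$ with $up(\zeta)=\{a:\zeta^{-1}(a)<a\}$, builds $w$ by sorting $up(\zeta)$ and its complement, puts $u=\zeta^{-1}w$, and then recursively extracts one explicit chain in $[u,w]_r$; relations (1)--(3) then connect this chain to all others. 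You instead propose an induction on the length of the word, patching the base point at each step by ``enlarging $r$ and translating.''

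That inductive step is where I see a genuine gap. Suppose the length-$k$ prefix is realized as a chain starting at some $u$ in the $r$-Bruhat order, with top element $v$. Whether the next operator $\u{ab}$ applies to $v$ depends on the positions $v^{-1}(a)$ and $v^{-1}(b)$ relative to $r$ and on a length condition; if both positions already sit on the wrong side of $r$, neither enlarging $r$ nor translating repairs this --- you would have to choose a different base permutation $u$ altogether, which is precisely why the paper builds $u$ from the \emph{entire} product $\zeta$ at once rather than incrementally. There is also a circularity in your setup: you define a word to be \emph{admissible} when it is read off an actual saturated chain, which makes your ``nonvanishing step'' (produce $(r,u)$ realizing an admissible word) true by definition; the real content, which you have pushed entirely into the citation of~\cite{bsottilemonoid}, is that every class not forced to $\mathbf 0$ by relations (4) and (5) contains an admissible word. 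If you grant that citation, your ``main obstacle'' has already evaporated and the proof is essentially complete; if you do not, your induction as described does not close it. Replacing the induction by the paper's explicit $\zeta\mapsto(r,u,w)$ construction via $up(\zeta)$ fixes the argument.
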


\begin{remark} When we specify a chain $\u{a_nb_n}\cdots\u{a_2b_2}\u{a_1b_1}$ in the interval  $[u,w]_r$, it is understood that this is the actual sequence of operators $(\u{a_nb_n},\ldots,\u{a_2b_2},\u{a_1b_1})$ we are referring to. This is a slight abuse of notation but it simplifies notation and  the context will make it clear.\end{remark}

In fact we can say much more about the monoid ${\mathcal M}\langle\u{ab}\rangle$. Given any $\zeta\in\S_\infty$ we produce a chain in a nonempty interval $[u,w]_r$ for some $r$
as follows. Let $up(\zeta)=\{a:\zeta^{-1}(a)<a\}$. This is a finite set and we can set $r=|up(\zeta)|$. To construct $w$, we sort the elements in $up(\zeta)=\{i_1<i_2<\cdots<i_r\}$ and its complement $up^c(\zeta)={\mathbb Z}_{>0}\setminus up(\zeta)=\{j_1<j_2<\ldots\}$. Next, we put $w=[i_1,i_2,\ldots,i_r,j_1,j_2,\ldots]\in\S_\infty$ and then we let $u=\zeta^{-1}w$. Notice that $u,w$ and $r$ constructed this way depend on $\zeta$. From~\cite{bsottileschub, bsottilemonoid}, we have that $[u,w]_r$ is non-empty and now we want to construct a chain in $[u,w]_r$. This is done recursively as follows: let
\begin{align*}
  a_1 &= u(i_1) \text{ where } i_1=\max\{i\le r:u(i)<w(i)\} \text{ \;\;\; and  }\\
  b_1 &= u(j_1) \text{ where } j_1=\min\{j> r:u(j)>u(i_1)\ge w(j)\}
\end{align*}
then $\u{a_nb_n}\cdots\u{a_2b_2} \u{a_1b_1}$ is a chain in $[u,w]_r$ for any chain $\u{a_nb_n}\cdots \u{a_2b_2}$ in $[(a_1, b_1)u,w]_r$. Here we have that all the other possible chains
in the interval $[u,w]_r$ are obtained from the chain $\u{a_nb_n}\cdots\u{a_2b_2} \u{a_1b_1}$ by sequences of transformations given in the equation~\eqref{eq:relschub}. This means that
the operator $\u{\zeta} = \u{a_nb_n}\cdots\u{a_2b_2} \u{a_1b_1}$
 is well defined, non zero for any $r'\ge r$ and if $\zeta\ne\zeta'$ then $\u{\zeta}\ne\u{\zeta'}$.
For a fix $r$,
  $$\u{\zeta}(u) =  \left\{\begin{array}{ll}
\zeta u   
&\mbox{ if } u<_r  \zeta u,
\medskip       
\\      0& \mbox{ otherwise.}\end{array}\right.
$$

\begin{example} \label{ex:schubert}
Consider $\zeta=[3, 6, 2, 5, 4, 1,...]$ where all other values are fixed. We have that $up(\zeta)=\{3,5,6\}$ and $up^c(\zeta)=\{1,2,4,...\}$. In this case, $r=3$, $w=[3,5,6,1,2,4,...]$ and $u=[1,4,2,6,3,5,...]$. The recursive procedure above produces the chain $\u{23}\u{12}\u{45}\u{26}$ in $[u,v]_3$. We get all other chains by using the relations~(\ref{eq:relschub}):
\begin{equation} \label{eq:axampleschub}
\begin{array}{cccc}
\u{23}\u{12}\u{45}\u{26},&
\u{23}\u{12}\u{26}\u{45},&
\u{23}\u{45}\u{12}\u{26},&
\u{45}\u{23}\u{12}\u{26},\cr
\u{45}\u{13}\u{36}\u{23},&
\u{13}\u{45}\u{36}\u{23},&
\u{13}\u{36}\u{45}\u{23},&
\u{13}\u{36}\u{23}\u{45}.
\end{array}
 \end{equation}
The interval obtained in this case is
$$
 \raise -50pt\hbox{ \begin{picture}(200,120)
      \put(100,0){$\scriptstyle 142635$}
      \put(50,30){$\scriptstyle 152634$}
      \put(100,30){$\scriptstyle 143625$}
      \put(150,30){$\scriptstyle 146235$}
      \put(25,60){$\scriptstyle 153624$}
      \put(75,60){$\scriptstyle 146325$}
      \put(125,60){$\scriptstyle 246135$}
      \put(175,60){$\scriptstyle 156234$}
      \put(50,90){$\scriptstyle 156324$}
      \put(100,90){$\scriptstyle 346125$}
      \put(150,90){$\scriptstyle 256134$}
      \put(100,120){$\scriptstyle 356124$}
      \put(120,8){\line(2,1){35}} \put(110,8){\line(-2,1){35}}  \put(115,8){\line(0,1){20}}
      \put(70,18){$\scriptstyle \u{45}$}  \put(117,18){$\scriptstyle \u{23}$}  \put(150,18){$\scriptstyle \u{26}$}
      \put(170,38){\line(1,1){20}} \put(160,38){\line(-1,1){20}}  
      \put(142,40){$\scriptstyle \u{12}$}  \put(190,48){$\scriptstyle \u{45}$}  
      \put(110,38){\line(-1,1){20}} \put(105,38){\line(-3,1){60}}  
      \put(100,48){$\scriptstyle \u{36}$}  \put(75,48){$\scriptstyle \u{45}$}  
      \put(60,38){\line(-1,1){20}} \put(70,38){\line(6,1){110}}  
      \put(30,48){$\scriptstyle \u{23}$}  \put(120,52){$\scriptstyle \u{26}$}  
      \put(140,68){\line(1,1){20}} \put(135,68){\line(-1,1){20}}  
      \put(130,76){$\scriptstyle \u{23}$}  \put(155,76){$\scriptstyle \u{45}$}  
      \put(90,68){\line(1,1){20}} \put(85,68){\line(-1,1){20}}  
      \put(78,76){$\scriptstyle \u{45}$}  \put(100,73){$\scriptstyle \u{13}$}  
      \put(40,68){\line(1,1){20}} 
        \put(50,73){$\scriptstyle \u{36}$}  
      \put(185,68){\line(-1,1){20}}  
        \put(180,76){$\scriptstyle \u{12}$} 
      \put(110,118){\line(-2,-1){35}} \put(120,118){\line(2,-1){35}}  \put(115,118){\line(0,-1){20}}
      \put(70,108){$\scriptstyle \u{13}$}  \put(117,108){$\scriptstyle \u{45}$}  \put(150,108){$\scriptstyle \u{23}$}
     \end{picture}} 
$$
 Since $u <_r \zeta u$ in this case, we have $\u{\zeta}(u) = \zeta u\ne 0$ for this $r$. Now for any $r'\ge r$, we can build $w'=[3,5,6,7,8,\ldots, 7+r'-r,1,2,4,...]$, $u'=[1,4,2,7,8,\ldots, 7+r'-r,,6,3,5,...]$ by adding fixed points of $\zeta=wu^{-1}$ before the position $r'$. In this way we construct a permutation $u'$ such that $u'<_{r'}\zeta u'$ and $\u{\zeta}(u')=\zeta u'\ne 0$ for any $r'\ge r$.
\end{example}

The above discussion shows the following corollary:

\begin{corollary}\label{cor:fixr}
 The monoid  ${\mathcal M}\langle\u{ab}\rangle$ is precisely
  $$ {\mathcal M}\langle\u{ab}\rangle=\big\{  \u{\zeta} : \zeta\in  {\mathcal S_{\infty}} \big\}\cup\big\{ {\bf 0}\big\}.$$
Moreover, if we let ${\mathcal M}_r\langle\u{ab}\rangle$ be the monoid generated by the operator $\u{ab}$ acting on $r$-Bruhat order for a fixed $r$,
we have
   $$ {\mathcal M}_r\langle\u{ab}\rangle=\big\{  \u{\zeta} : \zeta\in  {\mathcal S_{\infty}, \  |up(\zeta)|\le r} \big\}\cup\big\{ {\bf 0}\big\}.$$
\end{corollary}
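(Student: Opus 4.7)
The plan is to leverage the facts established in the paragraphs preceding the corollary: for every $\zeta\in\S_\infty$ the operator $\u{\zeta}$ has been built as an explicit product of $\u{ab}$'s, it is nonzero on at least one $r$-Bruhat order, distinct $\zeta$'s produce distinct operators, and by the discussion around~\eqref{eq:relschub} any two chains in a common $r$-Bruhat interval are identified in ${\mathcal M}\langle\u{ab}\rangle$ via the relations~\eqref{eq:relschub}. Granting these, the corollary reduces to two tasks: identify an arbitrary nonzero word in the generators with some $\u{\zeta}$, and then decide exactly for which $r$ the operator $\u{\zeta}$ is nonzero on the $r$-Bruhat order.

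For the first identity, $\supseteq$ is immediate from the construction of $\u{\zeta}$. For $\subseteq$, take any nonzero monomial $\u{a_nb_n}\cdots\u{a_1b_1}$ and choose $r$ and $u$ at which it acts nontrivially; by the definition of the generators, $u\lessdot_r (a_1,b_1)u\lessdot_r\cdots\lessdot_r \zeta u$ (with $\zeta:=(a_n,b_n)\cdots(a_1,b_1)$) is a chain in the $r$-Bruhat order. Thus the monomial is one chain in the nonempty interval $[u,\zeta u]_r$, and the canonical chain defining $\u{\zeta}$ is another. Since the two chains are connected by the relations~\eqref{eq:relschub}, the two monomials represent the same element of ${\mathcal M}\langle\u{ab}\rangle$, and the word equals $\u{\zeta}$.

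For the second identity, I must characterize when $\u{\zeta}$ is nonzero on a fixed $r$-Bruhat order. The ``if'' direction is already in hand: when $r_0:=|up(\zeta)|\le r$, the explicit construction preceding the corollary produces $u$ with $u<_{r_0}\zeta u$, and Example~\ref{ex:schubert} illustrates how to pad $u$ with fixed points of $\zeta$ to realize $u'<_{r'}\zeta u'$ for any $r'\ge r_0$. For the converse, suppose $u<_r w:=\zeta u$ in the $r$-Bruhat order. Each cover $u'\lessdot_r(a,b)u'$ swaps a smaller value $a$ at some position $i\le r$ with a larger value $b$ at some position $j>r$, so by induction along the chain $u(k)\le w(k)$ for $k\le r$ and $u(k)\ge w(k)$ for $k>r$. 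Now for any $a\in up(\zeta)$, writing $a=w(k)$ one has $\zeta^{-1}(a)=u(k)$, and the hypothesis $\zeta^{-1}(a)<a$ becomes $u(k)<w(k)$, which by the monotonicity forces $k\le r$. Hence $up(\zeta)\subseteq\{w(k):k\le r\}$ and $|up(\zeta)|\le r$ as desired.

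The main obstacle is conceptual rather than computational: one must carefully keep track of two different conventions, since the operator $\u{\a\b}$ acts on \emph{values} (namely $\u{\a\b}u=(\a,\b)u$) while the $r$-Bruhat cover condition is phrased in terms of \emph{positions}. Once that is reconciled, the argument is essentially bookkeeping around the monotonicity of values along $r$-Bruhat chains, combined with the injectivity and non-vanishing of $\u{\zeta}$ already furnished by the discussion preceding the corollary.
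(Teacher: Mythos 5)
Your proposal is correct and follows essentially the same route as the paper, which simply invokes the preceding discussion: every nonzero word is a chain in some interval $[u,\zeta u]_r$ and hence equals $\u{\zeta}$ by the completeness of the relations~\eqref{eq:relschub}, and $\u{\zeta}$ is realized on the $r$-Bruhat order whenever $r\ge |up(\zeta)|$ by the explicit construction and padding. The only genuine addition is your monotonicity argument (values at positions $\le r$ increase, those at positions $>r$ decrease along a chain) proving the converse bound $|up(\zeta)|\le r$, a detail the paper leaves implicit; it is correct.
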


Here the multiplication in ${\mathcal M}\langle\u{ab}\rangle$ is given by $ \u{\zeta} \u{\eta}= \u{\zeta\eta}$ if $\eta u <_r \zeta\eta u$ for some $u$ and $r$, and is $\bf 0$ otherwise.

\subsection{Pieri operators on $r$-Bruhat order}
We now introduce some Pieri operators related to the operators $\u{ab}$. These Pieri operators are defined in such a way that they mimic the multiplication of a Schubert polynomial by the homogeneous symmetric polynomial $h_k(x_1,\ldots,x_r)$.

A permutation $v\in{\mathcal S_{\infty}}$ such that $v(1)<v(2)<\cdots <v(r)$ and $v(r+1)<v(r+2)<\cdots$ is called $r$\emph{-grassmannian}. 
Any partition $\lambda=\lambda_1\ge\lambda_2\ge\cdots\ge\lambda_r\ge 0$ with at most $r$ non-zero parts defines a unique $r$-grassmannian permutation
  $$v(\lambda, r)=[\lambda_r+1, \lambda_{r-1}+2,\ldots,\lambda_1+r,v(r+1),\ldots],$$
  where $v(r+1)<v(r+2)<\cdots$ are the positive integers not in $\{\lambda_r+1, \lambda_{r-1}+2,\ldots,\lambda_1+r\}$. 
As seen in~\cite{LS82a,Macdonald91}, for any such partition $\lambda$ we have that the Schur polynomial $S_\lambda(x_1,x_2,\ldots,x_r)$ is equal to
the Schubert polynomial 
  $${\mathfrak S}_{v(\lambda,r)} = S_\lambda(x_1,x_2,\ldots,x_r).$$
In particular, the homogeneous polynomial $h_k(x_1,\ldots,x_r)$ is the Schubert polynomial ${\mathfrak S}_{v((k),r)}$.
The multiplication of an arbitrary Schubert polynomial by $h_k(x_1,\ldots,x_r)$ is known as the Pieri formula for Schubert polynomials.
It was originally stated as a theorem by Lascoux and Sch\"utzenberger~\cite{LS82a}
with a very brief outline of a proof.  Sottile later proved this
formula geometrically and clarified the history for us~\cite{Sottile}. 
Using the operators $\u{ab}$ on the $r$-Bruhat order, this
can be stated as follows.
\begin{equation}\label{eq:PieriSchub}
  {\mathfrak S}_u h_k(x_1,\ldots,x_r) =   {\mathfrak S}_u {\mathfrak S}_{v((k),r)}= \sum_{w}  {\mathfrak S}_w ,
\end{equation}
where the sum is over all $w>_r u$ such that $ \u{a_kb_k}\cdots\u{a_2b_2} \u{a_1b_1}(u)=w$ for some $b_1<b_2<\cdots <b_k$. It is known (see~\cite{bsottileschub,bsottilemonoid}) that in such interval $[u,w]_r$,
there must be a chain from $u$ to $w$ that is increasing in the sense that $ \u{a_kb_k}\cdots\u{a_2b_2} \u{a_1b_1}(u)=w$ with $b_1<b_2<\cdots <b_k$. Such a chain, when it exists, is unique among all saturated chains in  $[u,w]_r$.

We now introduce series $H_k$ similar to Section~\ref{subsection:ClassicPieri} that will commute with each other and encode the Pieri formula for Schubert polynomials.
Let
 \begin{equation}\label{eq:PieriopSchub}
 H_k= \sum_{b_1<b_2<\ldots<b_k \atop a_i < b_i} \u{a_kb_k}\cdots\u{a_2b_2}\u{a_1b_1}.
 \end{equation}
 Many of the terms in this sum are zero, the non-zero terms have a very special form. In~\cite{LS82a}, we see that it is important to look at the disjoint decomposition of $\zeta$ into disjoint cycles.
In the next proposition we describe the $\u{\zeta}$ appearing in $H_k$ and the structure of the disjoint cycles. For $\zeta\in {\mathcal S_{\infty}}$, let $\zeta=C_1C_2\cdots C_s$ be the decomposition of $\zeta$ in disjoint non-trivial cycles. There are only finitely many non-fixed points, so only finitely many non-trivial cycles. Given a cycle $C=(c_1,c_2,\ldots,c_m)$, we say that $C$ is {\sl increasing} if $c_m<c_{m-1}<\cdots<c_{1}$. Given two disjoint increasing cycles $C=(c_1,c_2,\ldots,c_m)$ and $C'=(c'_1,c'_2,\ldots,c'_n)$ we say that they are {\sl totally disjoint} if
any of the following happens
\begin{enumerate}
 \item $[c_m,c_{1}]\cap [c'_n,c'_{1}] = \emptyset$, or
 \item $[c_m,c_{1}]\cap \{c'_1,c'_2,\ldots,c'_n\} =\emptyset$, or 
 \item $[c'_n,c'_{1}]\cap \{c_1,c_2,\ldots,c_m\} =\emptyset$.
\end{enumerate}
In case (1), the two cycles have support in disjoint intervals. In cases (2) and (3), If the intervals intersect, their intersection must fall all between two sucessive elements in the support of the other cycles. For $C=(c_1,c_2,\ldots,c_m)$ let $|\!|C|\!|=m-1$.
For $\zeta=C_1C_2\cdots C_s$  a product of totally disjoint increasing cycles such that $k=\sum_{i=1}^s |\!|C_i|\!|$, we say that $\zeta$ is {\sl $k$-increasing}.

\begin{proposition}\label{prop:cyclpieri}
   $$H_k= \sum_{\zeta} \u{\zeta},$$
   where $\zeta$ runs over $k$-increasing permutations.
   \end{proposition}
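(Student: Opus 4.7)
The plan is to establish $H_k=\sum_\zeta\u{\zeta}$ by analyzing which terms in the defining sum~\eqref{eq:PieriopSchub} survive and grouping them. Any non-zero summand $\u{a_kb_k}\cdots\u{a_1b_1}$ equals $\u{\zeta}$ for $\zeta=(a_k,b_k)\cdots(a_1,b_1)$, and by Corollary~\ref{cor:fixr} distinct permutations of $\S_\infty$ yield distinct operators. It therefore suffices to exhibit a bijection between the tuples $(a_1,b_1),\ldots,(a_k,b_k)$ with $b_1<\cdots<b_k$ and $a_i<b_i$ producing a non-zero operator, and the $k$-increasing permutations $\zeta$.

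For the map from $k$-increasing permutations to tuples, each increasing cycle $C_j=(c_{j,1},\ldots,c_{j,m_j})$ with $c_{j,1}>\cdots>c_{j,m_j}$ factors canonically as
\[
C_j=(c_{j,1},c_{j,2})(c_{j,2},c_{j,3})\cdots(c_{j,m_j-1},c_{j,m_j}),
\]
and reading each $(c_{j,i},c_{j,i+1})$ as $\u{c_{j,i+1}c_{j,i}}$ places the larger element in the $b$ slot. These $b$-values are distinct within each cycle, and disjointness of the cycles makes them distinct across cycles; sorting the resulting $k$ transpositions by their $b$-values produces the unique tuple with $b_1<\cdots<b_k$ attached to $\zeta$. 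Non-vanishing follows from the recipe given before Example~\ref{ex:schubert}: take $r=|up(\zeta)|$ and the associated $u,w\in\S_\infty$, so the tuple describes a saturated chain in $[u,w]_r$ and hence $\u{\zeta}(u)=\zeta u\ne 0$. The three configurations in the total-disjointness condition are precisely those which can be interleaved via~\eqref{eq:relschub}(1)--(3) into a chain with strictly increasing $b$'s.

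For the converse, assume $\u{a_kb_k}\cdots\u{a_1b_1}\ne 0$ and set $\zeta=(a_k,b_k)\cdots(a_1,b_1)$. Group the transpositions by the connected components of the graph on $\bigcup\{a_i,b_i\}$ whose edges are the transpositions; each component supports a cycle of $\zeta$. Relation~\eqref{eq:relschub}(4), $\u{ac}\u{bd}=\u{bd}\u{ac}={\bf 0}$ when $a\le b<c\le d$, forbids two transpositions from ``crossing'' in position, leaving only the three admissible nestings or separations defining total disjointness. Relation~\eqref{eq:relschub}(5), $\u{ab}\u{bc}\u{ab}=\u{bc}\u{ab}\u{bc}={\bf 0}$, combined with $b_1<\cdots<b_k$, forces the larger entries inside each cycle to descend monotonically from the top, so each cycle is an increasing cycle in the paper's sense. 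A straightforward induction on $k$, removing the transposition with the largest $b_k$ and applying the hypothesis to the remaining $k-1$, organises the case analysis.

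The main obstacle is this converse direction. In the Young-lattice case of Section~\ref{subsection:ClassicPieri}, the transpositions are adjacent reflections and the forbidden configurations reduce immediately to repeated or $321$-shaped $s_r$-patterns, excluded by $i_1<\cdots<i_k$. For the Schubert monoid the transpositions are non-adjacent and their interactions are much richer, so one needs to apply relations~\eqref{eq:relschub}(4)--(5) systematically, possibly after commutations via~\eqref{eq:relschub}(1)--(3), to reduce any non-increasing or non-totally-disjoint configuration to a provably vanishing product, in a manner closely paralleling the chain analysis of~\cite{bsottileschub,bsottilemonoid}.
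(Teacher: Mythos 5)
Your strategy is the same as the paper's in outline --- induct on $k$, peel off the transposition with the largest $b$-value, and use the vanishing relations in~\eqref{eq:relschub} to constrain what survives --- but the proposal stops exactly where the real work begins. You yourself flag the converse direction as ``the main obstacle'' and say that one ``needs to apply relations~\eqref{eq:relschub}(4)--(5) systematically''; that systematic application \emph{is} the proof, and it is absent. Concretely, the paper's induction step assumes $\u{\zeta'}=\u{C_1}\cdots\u{C_s}$ is a non-zero product of totally disjoint increasing cycles with $b_{k-1}$ belonging to $C_1$, and then splits on the position of $a_k$ relative to $b_{k-1}$: if $a_k>b_{k-1}$ the pair $(b_k,a_k)$ starts a new totally disjoint cycle; if $a_k=b_{k-1}$ it extends $C_1$ to a longer increasing cycle; and if $a_k<b_{k-1}$, relation~\eqref{eq:relschub}(4) forces $a_k<a_{k-1}=c_2$ and commutation, and iterating this down $C_1=(c_1,\ldots,c_m)$ shows $\u{a_kb_k}$ passes entirely through $\u{C_1}$ with $a_k<c_m<c_1<b_k$, so that $C_1$ and $(b_k,a_k)$ are totally disjoint and induction applies to $\u{a_kb_k}\u{C_2}\cdots\u{C_s}$. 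None of this case analysis appears in your write-up, and your substitute for it --- that relation (5) ``forces the larger entries inside each cycle to descend monotonically'' --- is not the operative mechanism: relation (5) has a repeated larger entry and is already excluded by $b_1<\cdots<b_k$; the cycle structure actually comes from relation (4) together with the ordering of the $b_i$.

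The forward direction is also looser than it reads. Corollary~\ref{cor:fixr} gives $\u{\zeta}\ne 0$ as an operator, but that does not by itself show that your sorted-by-$b$ factorization is a non-vanishing chain, i.e.\ that $\u{\zeta}$ actually occurs as a term of the sum~\eqref{eq:PieriopSchub}: the recipe before Example~\ref{ex:schubert} produces a particular chain which need not be the increasing one. What is needed is that the operators $\u{C_i}$ attached to totally disjoint increasing cycles commute with one another, so that the cycle factorizations can be interleaved into increasing $b$-order without killing the product; the paper extracts this from a careful analysis of~\eqref{eq:relschub}. Your sentence asserting that the three total-disjointness configurations ``are precisely those which can be interleaved via~\eqref{eq:relschub}(1)--(3)'' states this claim rather than proving it.
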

\begin{proof}
  We proceed by induction on $k$. The result is clear for $k=1$. Assume the result is true for any non-zero product $\u{\zeta'}=\u{a_{k-1}b_{k-1}}\cdots\u{a_2b_2}\u{a_1b_1}$ such that
  $b_1<b_2<\ldots<b_{k-1}$. We assume that $\u{\zeta'}=\u{C_1}\u{C_2}\cdots\u{C_s}$ where $\zeta=C_1C_2\cdots C_s$ are totally disjoint increasing cycles.
  For $C=(c_1,c_2,\ldots,c_m)$, an increasing cycle, we have $\u{C}=\u{c_2c_1}\u{c_3c_2}\cdots\u{c_mc_{m-1}}$. A careful analysis of the relation~\eqref{eq:relschub} shows
  that for totally disjoint increasing cycles $C_1C_2\cdots C_s$, the operators $\u{C_i}$ and $\u{C_j}$ commute for $i\ne j$. We will assume that $a_{k-1}$ and $b_{k-1}$ belong to the cycle $C_1$.
  
  We investigate what happens when we perform a non-zero product $\u{a_kb_k}\u{\zeta'}$ where $b_k>b_{k-1}$. If $a_k>b_{k-1}$, then $(b_k,a_k)$ is a new increasing cycle totally disjoint from any cycle of $\zeta'$. If $a_k=b_{k-1}$, then $a_k,b_k$ increases the cycle $C_1$ of $\zeta'$ and is still totally disjoint from the other cycles of $\zeta'$. 
   
If $a_k<b_{k-1}$, then from~\eqref{eq:relschub}-(4) we must have $a_{k}<a_{k-1}$ and $\u{a_{k}b_{k}}\u{a_{k-1}b_{k-1}}\ne {\bf 0}$ commute. Let $C_1=(c_1,c_2,\ldots,c_m)$
   and recall that we have $b_{k-1}=c_1$ and $a_{k-1}=c_2$. We have $\u{C_1}=\u{c_2c_1}\u{c_3c_2}\cdots\u{c_mc_{m-1}}$ and $b_k>b_{k-1}=c_1>c_i$ for all $i$.
   Since $a_k<a_{k-1}=c_2$, then $\u{a_kb_k}\u{c_3c_2}\ne {\bf 0}$ implies $a_k<c_3$ and $\u{a_kb_k}\u{c_3c_2}$ commutes. Continuing this process, we find that $\u{a_kb_k}\u{C_1}=\u{C_1}\u{a_kb_k}\ne{\bf 0}$ and $a_k<c_m<c_1<b_k$. This means $C_1$ and $(b_k,a_k)$ are totally disjoint increasing cycles. 
We have
  $$\u{a_kb_k}\u{\zeta'}=\u{C_1}\u{a_kb_k}\u{C_2}\cdots\u{C_s}$$
From the induction hypothesis, the result holds for $\u{a_kb_k}\u{C_2}\cdots\u{C_s}$ and decomposes into totally disjoint increasing cycles. Moreover $C_1$ will be totally disjoint from the cycles
of $(b_k a_k)C_2\cdots C_s$.
\end{proof}

As in Corollary~\ref{cor:fixr}, the expression in Proposition~\ref{prop:cyclpieri} is valid as long as we consider all possible $r$-Bruhat orders for $r>1$.
If we fix $r$, then most of the $\u{\zeta}$ in $H_k$ will act as zero on the $r$-Bruhat order.
For a fixed $r$, we see that $H_k\colon {\mathbb Z} {\mathcal S}_\infty \to  {\mathbb Z} {\mathcal S}_\infty$ is a well defined operator on the $r$-Bruhat order.
From Corollary~\ref{cor:fixr}, for a fixed $r$, 
   $$H_k= \sum_{\zeta  \mbox{ \tiny  is $k$-increasing} \atop |up(\zeta)|\le r} \u{\zeta}.$$
By definition of $H_k$ and equation~\eqref{eq:PieriSchub}, we have 
  $$ H_k(w)=\!\!\!\! \sum_{wu^{-1} \mbox{ \tiny  $k$-increasing}} \!\!\!\!  u  \qquad \qquad \iff \qquad \qquad {\mathfrak S}_w h_k(x_1,\ldots, x_r) 
  =\!\!\!\!  \sum_{wu^{-1} \mbox{ \tiny $k$-increasing}} \!\!\!\!  {\mathfrak S}_u\,.$$
This implies that
\begin{equation}\label{eq:symH}
 \begin{array}{lr}
  H_b H_a(w) = \sum_{\nu} d_{w,(a,b)}^u u \hfill &\cr
 \qquad \qquad   \iff  &{\mathfrak S}_w h_a(x_1,\ldots, x_r)  h_b(x_1,\ldots, x_r)  =\sum_{u} d_{w,(a,b)}^u {\mathfrak S}_u\,.\cr
 \end{array}
 \end{equation}
In particular, for all $w$ we have  $H_b H_a(w)=H_a H_b(w)$ since $h_ah_b =h_b  h_a $. The result below is not as
well known as Theorem~\ref{thm:youngop}. 

\begin{theorem}\label{thm:rBruhatop}
 The algebra  ${\bf B}\langle H_{k}\rangle$ spanned by $\{H_1, H_2, H_3,\ldots\}$ as operators on the $r$-Bruhat order for $r>0$ is isomorphic to $Sym$.
\end{theorem}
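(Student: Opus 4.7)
The plan is to follow the three-step strategy used for Theorem~\ref{thm:youngop}: establish operator commutativity, show that the $H_\mu$ span $\mathbf{B}\langle H_k\rangle$, and then establish their linear independence. The main conceptual difference is that, unlike the Young lattice case, we cannot evaluate at a rank-zero element to obtain a single partition; instead, we must extract information from the Pieri/Kostka structure.

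First I would observe that the operator identity $H_aH_b=H_bH_a$ is essentially free, once equation~\eqref{eq:symH} is in hand. For any $w$, both $H_aH_b(w)$ and $H_bH_a(w)$ record the coefficients of the Schubert expansion of $\mathfrak{S}_w\cdot h_a(x_1,\ldots,x_r)h_b(x_1,\ldots,x_r)$, and commutativity in the polynomial ring forces the operators to agree on every $w$. Consequently, for any composition $\alpha$, the product $H_{\alpha_1}\cdots H_{\alpha_{\ell}}$ depends only on the partition $\mu=\mathrm{sort}(\alpha)$, and the family $\{H_\mu : \mu \text{ a partition}\}$ spans $\mathbf{B}\langle H_k\rangle$ as a $\mathbb{Z}$-module.

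The heart of the proof is the linear independence of the $H_\mu$. I would evaluate at the identity permutation $e$, which lies in every $r$-Bruhat order and satisfies $\mathfrak{S}_e=1$. Iterating~\eqref{eq:symH} gives
$$H_\mu(e)=\sum_u d^{\,u}_{e,\mu}\,u,$$
where $d^{\,u}_{e,\mu}$ is the coefficient of $\mathfrak{S}_u$ in $h_\mu(x_1,\ldots,x_r)$. Choosing $r\ge |\mu|$, the polynomial $h_\mu(x_1,\ldots,x_r)$ expands in Schur polynomials via Kostka numbers, and $s_\lambda(x_1,\ldots,x_r)=\mathfrak{S}_{v(\lambda,r)}$, yielding
$$H_\mu(e)=\sum_{\lambda\,\trianglerighteq\,\mu} K_{\lambda,\mu}\,v(\lambda,r).$$
The Grassmannian permutations $v(\lambda,r)$ indexed by distinct partitions are distinct basis elements of $\mathbb{Z}\mathcal{S}_\infty$, and the Kostka matrix is unitriangular in dominance order, hence invertible. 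Therefore, any hypothetical relation $\sum_\mu c_\mu H_\mu=0$, evaluated at $e$, forces $\sum_\mu K_{\lambda,\mu}c_\mu=0$ for every $\lambda$, and unitriangularity gives $c_\mu=0$ for all $\mu$.

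Combining the spanning and independence statements, the assignment $h_k\mapsto H_k$ extends to a $\mathbb{Z}$-algebra isomorphism $\mathrm{Sym}\to\mathbf{B}\langle H_k\rangle$ sending $h_\mu$ to $H_\mu$. The only substantive difficulty is locating, for each $\mu$, a vector on which $H_\mu$ behaves transparently; everything else rests on the already non-trivial input~\eqref{eq:PieriSchub}, which was the classical Pieri formula for Schubert polynomials and is the real source of commutativity here. In other words, the geometric/combinatorial depth is entirely concentrated in~\eqref{eq:symH}, and once that is granted the passage to the isomorphism is a straightforward Kostka-triangularity argument.
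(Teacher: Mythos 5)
Your proposal is correct and follows essentially the same route as the paper: commutativity of the $H_k$ is deduced pointwise from~\eqref{eq:symH} and the commutativity of $h_ah_b$, the $H_\mu$ for partitions $\mu$ span, and linear independence is obtained by evaluating at the identity permutation and reading off the expansion of $h_\mu(x_1,\ldots,x_r)$ into Schur polynomials (i.e.\ Grassmannian permutations) for $r$ large enough. Your explicit appeal to Kostka unitriangularity is just a spelled-out version of the paper's remark that $\Phi(Id)$ corresponds to $\sum_\lambda c_\lambda h_\lambda$, which vanishes only if all $c_\lambda=0$.
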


\proof
As we multiply $H_aH_b$ and $H_bH_a$, some terms will go to zero and others will survive. The terms that survive in  $H_aH_b$  are  of the form 
 $$ \u{w} = \u{\zeta}\u{\eta}$$
 where $\zeta$ is $a$-increasing and $\eta$ is $b$-increasing. Let $d_{(a,b)}^w$ be the coefficient of $\u{w}$ in $H_aH_b$. From Corollary~\ref{cor:fixr}, for any $w\in{\mathcal S}_\infty$
 we can find $u$ and an $r>0$ such that $\u{w}(u)=v\ne 0$. So $d_{(a,b)}^w$ is the coefficient of $v$ in $H_aH_b(u)$. From~\eqref{eq:symH}, for all $w$, we have 
  $$ d_{(a,b)}^w = \mbox{Coeff of $v$ in $H_aH_b(u)$} = \mbox{Coeff of $v$ in $H_bH_a(u)$} = d_{(b,a)}^w. $$
 Hence $H_aH_b=H_bH_a$.
 
The algebra ${\bf B}\langle H_{k}\rangle$ is clearly spanned by $H_\lambda=H_{\lambda_1}\cdots H_{\lambda_\ell}$ where $\lambda$ runs over all partitions.
To show the isomorphism with $Sym$, we only need to show that the $H_\lambda$ are linearly independent. Let $r\ge \ell(\lambda)$.
Using \eqref{eq:symH} we have that
$H_\lambda(Id)=\sum_\mu d_{\lambda}^{\mu} v_\mu$  where $v_\mu$ is the unique grassmannian permutation defined by $v(v_\mu,r)=\mu$

and the $d_{\lambda}^{\mu}$ satisfy
$$h_\lambda(x_1,\ldots,x_r)=\sum_\mu d_{\lambda}^{\mu} s_\mu(x_1,\ldots,x_r).$$
If we have a finite linear combination $\Phi = \sum_\lambda c_\lambda H_\lambda$, then for $r\ge \max\{ \ell(\lambda):c_\lambda\ne 0\}$ we have that $\Phi(Id)$ corresponds to the symmetric function $\sum_\lambda c_\lambda h_\lambda$. This is zero if and only if all $c_\lambda=0$.
\qed

As in Section~\ref{sec:K}, let $\langle v,w\rangle=\delta_{v,w}$ define a scalar product on ${\mathbb Z}{\mathcal S}_\infty$. 
For a fixed $r>0$ and $u<_r w$, we  define the quasisymmetric function
\begin{equation}\label{eq:Kuw}  
    K_{[u,w]_r} =\sum_\alpha \langle H_\alpha(u),w\rangle M_\alpha.
\end{equation}
As before, since $H_aH_b=H_bH_a$, the function $K_{[u,w]_r} $ is in fact a symmetric function. As shown in \cite{bsottileskew,bmws}, we have the following theorem:
\begin{theorem}\label{THM:Kuw}
    $$K_{[u,w]_r} = \sum_\alpha \langle R_\alpha(u),w \rangle F_\alpha = \sum_\mu c_{u,v(\mu,r)}^w s_\mu\,,$$
    where $ c_{u,v(\mu,r)}^w$ are defined in~\eqref{eq:genschub}. Moreover for $\alpha$, a composition of $n$, we have that $\langle R_\alpha(u),w \rangle$ counts the number of paths in the r-Bruhat order ${\mathcal S}_\infty$ from $u$ to $w$ of the form $\u{a_nb_n}\cdots\u{a_2b_2}\u{a_1b_1}$ where $b_i>b_{i+1}$ if and only if $i\in D(\alpha)$.
\end{theorem}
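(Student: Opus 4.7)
The plan is to mimic the proof of Theorem~\ref{thm:youngop} almost step for step, with the $r$-Bruhat order replacing the Young lattice and the Schubert Pieri rule \eqref{eq:PieriSchub}--\eqref{eq:symH} replacing the classical one. Starting from the definition $K_{[u,w]_r}=\sum_\alpha \langle \HH_\alpha\cdot u,w\rangle M_\alpha$, where $\HH_\alpha\cdot u := H_\alpha(u)$ gives an $Nsym$-action on $\mathbb{Z}\S_\infty$, the first equality is then a purely formal consequence of the fact that $(\HH_\alpha,M_\alpha)$ and $(R_\alpha,F_\alpha)$ are pairs of dual bases: the same linear change-of-basis argument used just before Theorem~\ref{thm:Kmulambda} delivers
$$K_{[u,w]_r}=\sum_\alpha \langle R_\alpha\cdot u,w\rangle F_\alpha.$$
The combinatorial interpretation of $\langle R_\alpha(u),w\rangle$ then falls out of the inversion \eqref{def:Ribbon}: by \eqref{eq:PieriopSchub}, $\langle \HH_\beta(u),w\rangle$ enumerates chains $\u{a_nb_n}\cdots\u{a_1b_1}$ from $u$ to $w$ whose top-index sequence $(b_1,\dots,b_n)$ is strictly increasing inside each block of $\beta$, i.e.\ has descents only at positions in $D(\beta)$; M\"obius inversion over $\beta\ge\alpha$ then isolates the chains with descent set exactly $D(\alpha)$.

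For the second equality, commutativity $H_aH_b=H_bH_a$ from Theorem~\ref{thm:rBruhatop} means $H_\alpha=H_{\mathrm{sort}(\alpha)}$, so using $m_\mu=\sum_{\mathrm{sort}(\alpha)=\mu}M_\alpha$ we may collapse the expansion to
$$K_{[u,w]_r}=\sum_\mu \langle H_\mu(u),w\rangle\, m_\mu.$$
By the Pieri identity \eqref{eq:symH}, each coefficient $\langle H_\mu(u),w\rangle=d^{w}_{u,\mu}$ is the coefficient of $\mathfrak S_w$ in $\mathfrak S_u\, h_\mu(x_1,\dots,x_r)$. To pass from the $m_\mu$- to the $s_\mu$-expansion, introduce the linear functional $\phi\colon Sym\to\mathbb Z$ defined by $\phi(f)=[\mathfrak S_w]\bigl(\mathfrak S_u\cdot f(x_1,\dots,x_r)\bigr)$; this makes sense because Schubert polynomials form a $\mathbb Z$-basis of $\mathbb Z[x_1,x_2,\dots]$. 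Then $\phi(h_\mu)=d^{w}_{u,\mu}$, and since $\{h_\mu\}$ and $\{m_\mu\}$ are Hall-dual, $\phi$ agrees with the Hall pairing $\langle K_{[u,w]_r},\,\cdot\,\rangle$ on the basis $\{h_\mu\}$, hence on all of $Sym$. Self-duality of $\{s_\mu\}$ then yields
$$[s_\mu]\,K_{[u,w]_r}=\langle K_{[u,w]_r},s_\mu\rangle=\phi(s_\mu)=[\mathfrak S_w]\bigl(\mathfrak S_u\,s_\mu(x_1,\dots,x_r)\bigr),$$
and the Lascoux--Sch\"utzenberger identification $s_\mu(x_1,\dots,x_r)=\mathfrak S_{v(\mu,r)}$ recalled before \eqref{eq:PieriSchub} converts the right-hand side into $[\mathfrak S_w](\mathfrak S_u\mathfrak S_{v(\mu,r)})=c_{u,v(\mu,r)}^w$ via \eqref{eq:genschub}.

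Because most of the hard work is already packaged in Theorems~\ref{thm:youngop} and \ref{thm:rBruhatop}, no single step is intrinsically difficult. The one conceptual point to watch is that the Hall-duality argument lives entirely inside $Sym$, while the Schubert coefficients live in the polynomial ring after the specialisation $f\mapsto f(x_1,\dots,x_r)$; the two worlds are bridged only by the grassmannian identification $s_\mu(x_1,\dots,x_r)=\mathfrak S_{v(\mu,r)}$. This is also the structural reason why the output is indexed by $c_{u,v(\mu,r)}^w$ (Schubert constants at grassmannian permutations) rather than by the full three-parameter family $c_{u,v}^w$.
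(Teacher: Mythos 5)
Your proposal is correct and follows essentially the route the paper intends: the paper itself defers the proof of this theorem to \cite{bsottileskew,bmws}, but its in-text proof of the analogous Theorem~\ref{thm:Kmulambda} is exactly this argument (duality of $(\HH_\alpha,M_\alpha)$ and $(R_\alpha,F_\alpha)$ for the first equality and inclusion--exclusion for the path count, then Hall duality of $h_\mu/m_\mu$ and self-duality of $s_\mu$ for the second), and you adapt it faithfully. Your only addition is to make explicit the functional $\phi$ and the bridge $s_\mu(x_1,\ldots,x_r)=\mathfrak S_{v(\mu,r)}$ needed because the coefficients now live in the Schubert basis of the polynomial ring rather than in $Sym$, which is exactly the point that has to be checked.
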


\begin{example}\label{ex:K}
Using the chains in~(\ref{eq:axampleschub}) and Theorem~\ref{THM:Kuw} we can compute the quasisymmetric function associated to this interval and we get
$$
 \begin{array} {rcl}
   K_{[142635,356124]_3} &=& F_{13} + F_{121} + F_{22} + F_{112} + F_{121} + F_{31} + F_{211} + F_{22}\cr
                                          &=& S_{31} + S_{22} + S_{211}.
 \end{array}$$
 \end{example}

\begin{remark}
  The monoid generated by the operators $\u{ab}$ does not satisfy relations that resemble~\eqref{eq:relFG}, hence we cannot use the work of Fomin and Greene to
  conclude that $K_{[u,w]_r}$ is symmetric nor deduce a combinatorial rule for constructing the coefficient $ c_{u,v(\mu,r)}^w$ in $K_{[u,w]_r}$.
  In fact all known attempts to give such a rule  so far have failed. 
  In the next section we outline how it is shown combinatorially in~\cite{ABS} that the coefficients are positive (without giving an explicit rule in all cases) using techniques developed by~\cite{A_JAMS}.
\end{remark}

\subsection{Combinatorial proof of positivity of $c_{u,v(\mu,r)}^w$}\label{sec:pos}
Let ${Comp}_n$ denote the set of compositions of $n$.
Given a finite family of objects $\mathcal C$ and a function $\alpha\colon {\mathcal C}\to Comp_n$ we can define a quasisymmetric function as follows
  $$ K_{\mathcal C} = \sum_{x\in {\mathcal C}} F_{\alpha(x)} \,.$$
The function $K_{[u,w]_r}$ of Theorem~\ref{THM:Kuw} is clearly of this form. In that case ${\mathcal C}$ is the set of saturated chains $\u{a_nb_n}\cdots\u{a_2b_2}\u{a_1b_1}$ in the interval $[u,w]_r$
and $\alpha=\alpha(\u{a_nb_n}\cdots\u{a_2b_2}\u{a_1b_1})$ is the unique composition where $b_i>b_{i+1}$ if and only if $i\in D(\alpha)$.

Assaf~\cite{A_JAMS} develops new combinatorial techniques to show that quasisymmetric functions of the form $K_{\mathcal C}$ are symmetric with a positive expansion in terms of Schur functions.
To this end one must construct partially commuting involutions $\phi_i\colon {\mathcal C}\to{\mathcal C}$ for $1<i<n$ satisfying a set of axioms.
When $\mathcal C$ consists of words (or saturated chains), the involutions $\phi_i$ can be viewed as an analogue of the dual Knuth relations.
In~\cite{ABS} we have defined such involution $\phi_i$ on the set of chains of $[u,w]_r$.
Given a chain $x=\u{a_nb_n}\cdots\u{a_2b_2}\u{a_1b_1}$, the involution $\phi_i$ will only affect the three entries $\u{a_{i+1}b_{i+1}}\u{a_ib_i}\u{a_{i-1}b_{i-1}}$.
We set $\phi_i(x)=x$ if and only if $\big| D(\alpha(x)) \cap \{i-1,i\}\big| \ne 1$. When $\big| D(\alpha(x)) \cap \{i-1,i\}\big| = 1$, the entries $\u{a_{i+1}b_{i+1}}\u{a_ib_i}\u{a_{i-1}b_{i-1}}$
of $x$ can be one of twelve cases. To define $\phi_i$, we match the twelves cases as follows:
\begin{enumerate}
\item[{\bf (A)}]  $\u{\gamma c}\u{\alpha a}\u{\beta b}\leftrightarrow\u{\alpha a}\u{\gamma c}\u{\beta b}$,
\item[\phantom{\bf (A')}] $\u{\beta b}\u{\alpha a}\u{\gamma c} \leftrightarrow\u{\beta b}\u{\gamma c}\u{\alpha a}$,\qquad
         if $\{a,\alpha\}\cap\{c,\gamma\}=\emptyset$ and $a<b<c$,
\item[{\bf (B)}] $\u{b c}\u{a b}\u{b d} \leftrightarrow\u{a c}\u{c d}\u{b c}$,
\item[\phantom{\bf (B')}] 
$\u{b d}\u{a b}\u{b c} \leftrightarrow\u{b c}\u{c d}\u{a c}$,\hskip2.75em
         if $a<b<c<d$,
\item[{\bf (C)}] $\u{\beta b}\u{\alpha a}\u{a c} \leftrightarrow \u{\alpha a}\u{a c}\u{\beta b}$,
\item[\phantom{\bf (C')}] $\u{ac}\u{\alpha a}\u{\beta b} \leftrightarrow \u{\beta b}\u{ac}\u{\alpha a}$,\hskip2.4em
         if $\{\alpha,a,c\}\cap\{b,\beta\}=\emptyset$ and $a<b<c$.
\end{enumerate}
This matching is completely determined by the relations in~\eqref{eq:relschub}. We see them as the analogue of the dual Knuth relations for this problem. 
Instead of using the relation~\eqref{eq:relschub} one can investigate the free monoid spanned by the $\u{ab}$ modulo the dual Knuth relations above.
Under certain axioms described in~\cite{A_JAMS,ABS}, the component of the equivalent classes of these relations will be combinatorially symmetric and Schur positive.
To our knowledge this is the best we can do so far, and is the best generalization of the work of Fomin and Greene.

\section{$k$-Schur functions}\label{sc:future}
In this section we present a monoid of operators for which much less is known but that is expected to behave as in Section~\ref{sc:New}.
This monoid is related to the so-called $k$-Schur functions~\cite{LLM03,LLMS}.
This time we will define operators on the Bruhat order of the $k$-affine symmetric group. 
The operators we define will be related to the multiplication of dual $k$-Schur functions.
There are still many open questions in this case, but we will present our program and we believe that it can be solved in the same spirit as in Section~\ref{sc:New}.
There is another order one may consider on the $k$-affine symmetric group, namely the {\sl weak} order.
The operators corresponding to the weak order are related to the multiplication of $k$-Schur functions, but we will discuss only briefly  the difficulties which arise in this situation.

The $k$-Schur functions were originally defined combinatorially in terms of
$k$-atoms, and conjecturally provide a positive decomposition of the Macdonald
polynomials~\cite{LLM03}. These functions have several definitions and it is conjectural that they are equivalent (see~\cite{LLMS}). In this paper we will adopt the definition given by the $k$-Pieri rule
and $k$-tableaus (see~\cite{lapointemorseDEF,LLMS}) since this gives us a relation with the homology and cohomology of the affine grassmannians and we therefore get positivity in their structure constants.

Different objects index $k$-Schur functions: $0$-grassmannian in $k$-affine permutations, $k+1$-cores, $k$-bounded partitions. Originally (as in~\cite{LLM03}), $k$-Schur functions were indexed by $k$-bounded partitions $\lambda=(\lambda_1,\lambda_2,\ldots,\lambda_\ell)$ where $\lambda_1\le k$. These partitions are in bijection with $k+1$-cores (see ~\cite{LM1}). By definition, $k+1$-cores are integer partitions $\mu=(\mu_1,\mu_2,\ldots,\mu_m)$ with no hook of lenght $k+1$. To close the loop,  in~\cite{bjornerbrenti} it is shown that $k+1$-cores are in bijection with $0-$grassmannian permutations in the $k$-affine symmetric group (see also~\cite{BBTZ, LLMS}). 

\subsection{Affine symmetric group.}
The $k$-affine symmetric group $W=\tilde{A}_k$ is generated by reflections $s_i$ for $i \in \{0,1,\ldots,k\}$, subject to the relations: 
 $$		s_i^2 = 1 ;\qquad
		s_is_{i+1}s_i = s_{i+1}s_is_{i+1};\qquad
		s_is_j = s_js_i  \ \textrm{ if } i-j \neq \pm 1,
 $$
 where $i-j$ and $i+1$ are understood to be taken modulo $k+1$.
 Let $w\in W$ and  denote its length by $\ell(w)$, given by the minimal number of generators needed to write a reduced expression for $w$. We let $W_0$ denote the parabolic subgroup obtained from $W$ by removing the generator $s_0$. This is naturally isomorphic to the symmetric group $\S_{k+1}$.
For more details on the affine symmetric group see \cite{bjornerbrenti}.

Let $u\in W$ be an affine permutation. This permutation can be represented using window notation. That is, $u$ can be seen as a bijection from $\mathbb Z$ to $\mathbb Z$, so that if $u_i$ is the image of the integer $i$ under $u$, then it can be seen as a sequence:
$$
u=\cdots | u_{-k}\; \cdots\; u_{-1}\; u_{0}\underbrace{| u_1\; u_2\; \cdots\; u_{k+1} |}_{\text{main window}}u_{k+2}\; u_{k+3}\; \cdots\; u_{2k+2}|\cdots
$$
Moreover, $u$ satisfies the property that  $u_{i+k+1}=u_i+k+1$ for all $i$, and the sum of the entries in the main window $u_1+u_2+ \cdots+ u_{k+1}={{k+2}\choose{2}}$. Notice that in view of the first property, $u$ is completely determined by the entries in the main window. 
In this notation, the generator $u=s_i$ is the permutation such that $u_{i+m(k+1)}=i+1+m(k+1)$ and $u_{i+1+m(k+1)}=i+m(k+1)$ for all $m$, and $u_j=j$ for all other values. The multiplication $uw$ of permutations $u,w$ in $W$ is the usual composition given by $(uw)_i = u_{w_i}$. In view of this, the parabolic subgroup $W_0$ corresponds to the $u\in W$ such that the numbers $\{1,2,\ldots,k+1\}$ appear in the main window. 

Now, let $W^0$ denote the set of minimal length coset representatives of $W/W_0$. In this paper we take right coset representatives, although left coset representatives could be also taken. The set of permutations in $W^0$ are the \emph{affine grassmannian permutations} of $W$, or $0$-grassmannians for short. 

\begin{definition}\label{grassmannian}
The {\sl affine $0$-grassmannians} $W^0$ are  the permutations $u\in W$ such that the numbers $1,2,\ldots,k+1$ appear from left to right in the sequence $u$.\end{definition}

\begin{example}\label{5core}
Let $k=4$ and 
$$
u=\cdot\cdot\cdot | \bar{3}\;\bar{2}\;1\;\bar{5}\;\bar{1}\underbrace{| 2\;3\;6\;\bar{0}\;4|}_{\text{main window}}7\;8\;11\;5\;9|\cdot\cdot\cdot 
$$
where $\bar{i}$ stands for $-i$. By convention we say that $0$ is negative.
This permutation $u$ is $0$-grassmannian and it corresponds to the $5$-core $\mu=(4,1,1)$. The correspondence
is easy to see from the window notation. We just need to read the sequence of entries of $u$, drawing a vertical step down for each negative entry,
and an horizontal step right for each positive entry. The result is the diagram of $\mu$:
$$
 \raise -0pt\hbox{ \begin{picture}(100,100)
 \rouge{   \put(0,0) {\line(1,0){80}}
               \put(0,0) {\line(0,1){60}} }
 \put(-1,20) {\line(1,0){2}}    \put(-1,40) {\line(1,0){2}}     \put(-1,60) {\line(1,0){2}}   
 \put(20,-1) {\line(0,1){2}}    \put(40,-1) {\line(0,1){2}}     \put(60,-1) {\line(0,1){2}}    \put(80,-1) {\line(0,1){2}}
 \put(0,60) {\line(0,1){20}}   \put(0,60) {\line(1,0){20}}  \put(20,60) {\line(0,-1){40}} \put(20,20) {\line(1,0){60}}
  \put(80,20) {\line(0,-1){20}} \put(80,00) {\line(1,0){20}}
  \put(-2,85){$\vdots$}    \put(105,0){$\ldots$}
  \put(-6,66){$\scriptstyle \bar{2}$}  
  \put(8,62){$\scriptstyle {1}$}  
  \put(14,46){$\scriptstyle \bar{5}$}    \put(14,26){$\scriptstyle \bar{1}$}  
  \put(28,22){$\scriptstyle {2}$}    \put(48,22){$\scriptstyle {3}$}    \put(68,22){$\scriptstyle {6}$}  
  \put(74,6){$\scriptstyle \bar{0}$}  
  \put(88,2){$\scriptstyle {4}$}     
      \end{picture}} 
$$
\end{example}

\subsection{$k$-Schur functions and weak order. }
As previously  mentioned, $0$-grassmannian permutations index $k$-Schur functions, which we  denote by $S_u^{(k)}$ for some $u\in W^0$. 

Given $u\in W$, we say that $u \lessdot_w us_i$ is a cover for the weak order if $\ell(us_i)=\ell(u)+1$. The weak order on $W$ is the transitive closure of these covers. 
We can define operators
\begin{equation}\label{eq:weakop}
\begin{array}{rcl}
\s{i} \colon\  {\mathbb Z}W^0&\longrightarrow& \quad{\mathbb Z}W^0,\\
u\quad&\longmapsto&\ \ \rule{0pt}{28pt} \left\{\begin{array}{ll}
us_i   
&\mbox{ if } u\lessdot_w us_i 
\\      0& \mbox{ otherwise}\end{array}\right.
\end{array} 
\end{equation}
on the weak order of $W$ restricted to $W^0$. The definition and multiplication of $k$-Schur functions is based on the operators $\s{i}$ so it is worthwhile to study
the monoid they generate. As we will see in Example~\ref{ex:kschur} there are difficulties with the behavior of this case which make it very difficult at this point to
understand its combinatorics. For this reason, we will quickly turn our attention to the dual $k$-Schur after Example~\ref{ex:kschur}.

The Pieri rule for $k$-Schur functions is described by certain chains in the weak order of $W$ restricted to $W^0$. This result is given in ~\cite{lapointemorseDEF,Lam,LLMS}. 
A saturated chain $w=\s{i_m}\cdots\s{i_2}\s{i_1}(u)$ in an interval $[u,w]_w$ of the weak order restricted to $W^0$ gives us a sequence of labels $(i_1,i_2,\ldots,i_m)$. We say that the sequence $(i_1,i_2,\ldots,i_m)$ is cyclically increasing if $i_1,i_2,\ldots,i_m$ lies clockwise
on a clock with hours $0,1,\ldots,k$ and if the  $\min\big\{ j : 0\le j\le k; \ j\notin \{i_1,i_2,\ldots,i_m\}\big\}$ lies between $i_m$ and $i_1$.
In particular we must have $1\le m\le k$.
Now, to express the Pieri rule, we first remark that for $1\le m\le k$, the homogeneous symmetric function $h_m $ corresponds to the $k$-Schur function $S_{v(m)}^{(k)}$ where $v(m)$ is a $0$-grassmannian whose main window is given by $|2\;\cdots\;m\;\bar{0}\;m+1\;\cdots\;k\;k+2|$.
Then, the multiplication of a $k$-Schur function $S_u^{(k)}$ by a homogeneous symmetric function $h_m$   
is given by
\begin{equation}\label{eq:PierikSchur}
   S_u^{(k)} h_m:=\ \sum_{(i_1,i_2,\ldots,i_m) \text{ cyclically increasing } \atop \s{i_m}\cdots\s{i_2}\s{i_1}(u)\ne 0}S_{ \s{i_m}\cdots\s{i_2}\s{i_1}(u)}^{(k)}.
\end{equation}
Iterating equation (\ref{eq:PierikSchur}) one can easily see that
\begin{equation}\label{eq:hkschur}
 h_\lambda = \ \sum_{u}  {\rm K}_{\lambda,u}S_{u}^{(k)}
 \end{equation}
is a triangular relation~\cite{lapointemorseDEF}. One way to define $k$-Schur functions is to start with equation~(\ref{eq:PierikSchur}) as a rule, and define them as follows.
\begin{definition}\label{def:kschu}
 The {\sl $k$-Schur functions} are the unique symmetric funtions $S_{u}^{(k)}$ obtained by inverting the matrix $[{\rm K}_{\lambda,u}]$ obtained from (\ref{eq:hkschur}) above.
\end{definition}

 It is clear that we can define a Pieri operator 
   $$H_m=\sum_{(i_1,i_2,\ldots,i_m) \text{ cyclically increasing }}  \s{i_m}\cdots\s{i_2}\s{i_1}\,,$$
   for $1\le m \le k$. Again we can show that $H_aH_b=H_bH_a$ and define $K_{[u,w]_w}$ using the original definition. The example below shows the main  problems we have with this function.

\begin{example} \label{ex:kschur}
Let $k=2$ and $u=|\bar{0}\; 2\; 4|$. We consider the interval $[u,w]_w$ in the weak order  restricted to $W^0$, where $w=|\bar{3}\;4\;5|$. This interval is a single chain $w=\s{0}\s{2}\s{1}(u)$.
In this case, we remark that $$\langle H_1H_1H_1(u),w\rangle=\langle H_1H_2(u),w\rangle=\langle H_2H_1(u),w\rangle=1$$
are the only nonzero entries in $K_{[u,w]_w}$ and we get
$$
 \begin{array} {rcl}
   K_{[u,w]_w} &=& M_{111}+M_{21}+M_{12}\cr
                       &=& F_{12} + F_{21} - F_{111} \cr
                       &=& S_{21} - S_{111}.
 \end{array}$$
\end{example}

\noindent This small example shows some of the behavior of the (quasi)symmetric function $K_{[u,w]_w}$ for the weak order of $W$. In general, it is neither $F$-positive nor Schur positive. Although, these functions contain some information about the structure constants, it is not enough to fully understand them combinatorially. In particular, these functions lack some of the properties needed to use the theory developed in ~\cite{A_JAMS}. The functions $K_{[u,w]_w}$ were first defined in ~\cite{bmws,postnikov} but the combinatorial expansion in terms of Schur functions is still open. 

\subsection{Dual $k$-Schur functions. }

Recall that $Sym = \mathbb Z[h_1,h_2,\dots]$ is the Hopf algebra of symmetric functions. The space of $k$-Schur functions $Sym_{(k)}$ can be seen as a subalgebra of $Sym$ spanned by $ {\mathbb Z}[h_1,h_2,\ldots,h_k]$. In fact, it is a Hopf subalgebra whose comultiplication defined in the homogeneous basis is
given by
 $$\Delta(h_m) =\sum_{i=0}^m h_i\otimes h_{m-i} $$
 and extended algebraically.
 The degree map is given by $\deg(h_m)=m$. 
 The space $Sym$ is a self dual Hopf algebra where the Schur functions $S_\lambda$ form a self dual basis under the pairing $\langle h_\lambda,m_\mu \rangle=\delta_{\lambda,\mu}$. 
 
\noindent The  map dual to the inclusion $Sym_{(k)}\hookrightarrow Sym$, is a projection $Sym\to\!\!\!\!\!\to Sym^{(k)}$, where $Sym^{(k)}=Sym_{(k)}^*$ is the graded dual of $Sym_{(k)}$. It can be checked that the kernel of this projection is the linear span of $\{m_\lambda:\lambda_1>k\}$, hence
  $$Sym^{(k)}\  \cong\  Sym \big/ \langle m_\lambda:\lambda_1>k \rangle\,. $$
The graded dual basis to $S_u^{(k)}$ will be denoted here by ${\frak S}_u^{(k)}=S_u^{(k)*}$ which are also known as the affine Stanley symmetric functions. The multiplication of the dual $k$-Schur ${\frak S}_u^{(k)}$
is described in terms of operator on the affine Bruhat order, as we will see in the next section. 

\subsection{Affine Bruhat order.} 
 For $b-a\leq k$, let $t_{a,b}$ be the transposition in $W$ such that for all $m\in\mathbb Z$, it transposes $a+m(k+1)$ and $b+m(k+1)$.
The \emph{affine Bruhat order}  is given by its covering relation. Namely, for $u\in W$, we  have $u\lessdot ut_{a,b}$ is a cover in the affine Bruhat order if $\ell(ut_{a,b})=\ell(u)+1$.

\begin{proposition}[see \cite{bjornerbrenti}]\label{prop:bruhatcover}
 For $u\in W$ and $b-a\le k$, we have that $u\lessdot ut_{a,b}$ is a cover in the \emph{Bruhat order} if and only if $u(a)<u(b)$ and for all $a<i<b$ we have $u(i)<u(a)$ or $u(i)>u(b)$. 
\end{proposition}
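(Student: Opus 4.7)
The plan is to verify this cover characterization through the inversion formula for length in $\tilde A_k$, and to show directly that the length increases by exactly one precisely under the stated conditions.

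First, I would recall the inversion formula for the affine length function:
$$\ell(u) \;=\; \bigl|\bigl\{(i,j) : 1 \le i \le k+1,\; i<j,\; u(i) > u(j)\bigr\}\bigr|,$$
which is finite for any $u \in W$ because of the periodicity $u(i+k+1)=u(i)+k+1$. Next, by translating $a$ and $b$ by a common multiple of $k+1$ (which does not change $t_{a,b}$ or the cover condition), I would assume $1 \le a \le k+1$, so that $a < b \le 2k+1$ and the interval $[a,b]$ sits in one affine window of length at most $k+1$. The permutation $ut_{a,b}$ coincides with $u$ except that the values at positions $a+m(k+1)$ and $b+m(k+1)$ are swapped for every $m \in \mathbb Z$.

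The core of the argument is to compute $\ell(ut_{a,b})-\ell(u)$ as the signed difference of inversion sets. Only pairs involving a position of the form $a+m(k+1)$ or $b+m(k+1)$ are affected, and I would organize these into three groups. The pair $(a,b)$ itself contributes $+1$ precisely when $u(a)<u(b)$. For each $a<i<b$, the two pairs $(a,i)$ and $(i,b)$ together contribute $0$ unless $u(i)$ lies strictly between $u(a)$ and $u(b)$, in which case they jointly contribute $2\cdot\mathrm{sgn}(u(b)-u(a))$. Finally, the pairs involving translates $a+m(k+1)$ or $b+m(k+1)$ with $m\neq 0$ match up by the periodicity $u(i+k+1)=u(i)+k+1$ and cancel. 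Combining these yields, when $u(a)<u(b)$,
$$\ell(ut_{a,b})-\ell(u) \;=\; 1 + 2\,\bigl|\bigl\{a<i<b : u(a)<u(i)<u(b)\bigr\}\bigr|,$$
while $u(a)>u(b)$ produces a strictly negative difference. So $u\lessdot ut_{a,b}$ iff $u(a)<u(b)$ and no intermediate $u(i)$ lies between $u(a)$ and $u(b)$, which is exactly the statement.

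The main obstacle is the bookkeeping in the third group above: showing that inversion changes coming from pairs $(a+m(k+1), j)$ and $(b+m(k+1), j)$ with $m\neq 0$ really do cancel against each other under the affine periodicity, rather than being lost when one truncates to representatives $i\in\{1,\ldots,k+1\}$. The cleanest way to handle this is to reindex the inversion set by unordered pairs modulo the diagonal $\mathbb Z$-action $i\mapsto i+k+1$; with that formulation the cancellation is automatic and one is reduced to the analysis of finitely many pairs inside $[a,b]$, where the argument proceeds as in the finite symmetric group.
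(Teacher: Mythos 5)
The paper offers no proof of this proposition --- it is quoted from Bj\"orner--Brenti --- and your argument is exactly the standard inversion-counting proof given there, so it is correct and follows the same route as the cited source. The one point needing care is the one you flag yourself: the clean ``$0$ or $\pm 2$'' contribution of an intermediate position $i$ is literally true only after reindexing inversions by orbits of pairs under the diagonal shift $i\mapsto i+k+1$, and the hypothesis $b-a\le k$ is precisely what guarantees each such orbit meets $(a,b)$ in at most one representative, so the computation does reduce to the finite symmetric group case as you claim.
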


\noindent Notice that if $a'=a+m(k+1)$ and $b'=b+m(k+1)$ then $t_{a',b'}=t_{a,b}$, therefore, many different choices of $a$ and $b$ give the same covering as long as they satisfy the conditions of the proposition. The affine $0$-Bruhat order arises as a suborder of the Bruhat order. We define it by its covers. For $u\in W$, we get  a covering $u\lessdot_0 ut_{a,b}$ if there exists a transposition $t_{a,b}$ satisfying proposition \ref{prop:bruhatcover} and also $u(a)\leq 0<u(b)$. As previously noted, a transposition $t_{a',b'}$ satisfying the same conditions as $t_{a,b}$ gives the same affine Bruhat covering relation as long as $a'\equiv a$, $b'\equiv b$ modulo $k+1$. 
In view of this, we introduce operators on the affine $0$-Bruhat order restricted to $W^0$. To keep track of  the distinct $a,b$ such that $u\lessdot_0 ut_{a,b}$ is an affine $0$-Bruhat covering  for a given $u$.
For any $b-a\le k+1$, let 
\begin{equation}\label{eq:0bruhatop}
\begin{array}{rcl}
\t{\a\b} \colon\  {\mathbb Z}W^0&\longrightarrow& \quad{\mathbb Z}W^0,\\
u\quad&\longmapsto&\ \ \rule{0pt}{28pt} \left\{\begin{array}{ll}
ut_{a,b}   
&\mbox{ if } u\lessdot ut_{a,b} \mbox{ and } u(a)\le 0 < u(b)
\medskip       
\\      0& \mbox{ otherwise.}\end{array}\right.
\end{array} 
\end{equation}
We  write these operators as acting on the right: $u\t{\a\b}$. Remark now that if $u\t{\a\b}\ne 0$, then $u\t{\a\b}=u\t{\a',\b'}\ne 0$ for only finitely many values of $m$ with $a'=a+m(k+1)$ and $b'=b+m(k+1)$. To see this, it is enough to notice that there exists $m$ such that $u(a+m(k+1))\geq 0$  and $m'$ such that  $u(b+m'(k+1))< 0$. 

\begin{example} \label{ex:0Bruhat} Below we have the interval $[|\bar{6}\,8\,3\,\bar{1}\,4\,13|,|8\,\bar{6}\,\bar{2}\,9\,13\,\bar{1}|]$ in the affine $0$-Bruhat graph:
$$
\raise 15pt\hbox{ \begin{picture}(300,190)
      \put(40,-2){$\scriptstyle \cdots \bar{8}\,1|\overline{12}\,2\,\bar{3}\,\bar{7}\,\bar{2}\,7
                              \underbrace{\scriptstyle |\bar{6}\,8\,3\,\bar{1}\,4\,13|}_{\text{main}}\bar{0}\,14\cdots$}
      \put(108,8){\line(-2,1){52}}  \put(50,16){$\scriptscriptstyle \t{\bar5\bar{4}};\t{12};\t{78}$}  
      \put(115,8){\line(0,1){27}}  \put(102,20){$\scriptscriptstyle \t{\bar{7}\bar{6}}\,\,\,\t{\bar{1}\bar{0}}$}  
      \put(120,8){\line(2,1){50}}     \put(140,27){$\scriptscriptstyle \t{\bar{1}3}$}
      \rouge{ \put(130,8){\line(3,1){80}} \put(190,16){$\scriptscriptstyle \t{45}$}}
      \put(30,40){$\scriptstyle |8\,\bar{6}\,3\,\bar{1}\,4\, 13|$}
      \put(90,40){$\scriptstyle |\bar{6}\,8\,3\,\bar{1}\,13\,4|$}
      \put(150,40){$\scriptstyle |\bar{6}\,8\,\bar{2}\,\bar{1}\,9\,13|$}
      \rouge{ \put(210,40){$\scriptstyle |\bar{6}\,8\,3\,4\,\bar{1}\,13|$}}
      \rouge{ \put(148,48){\line(-2,1){78}}  }
      \rouge{\put(52,48){\line(4,1){163}}      }
      \rouge{ \put(204,48){\line(-2,1){78}}  }
      \rouge{ \put(100,48){\line(4,1){158}}     }   
      \put(19,48){\line(-1,1){40}} 
      \put(22,48){\line(3,1){118}}      
      \put(67,48){\line(-2,1){81}}   
      \put(75,48){\line(-1,1){40}}  
      \put(134,48){\line(-1,1){40}} 
      \put(142,48){\line(1,3){13}}  
      \put(-40,90){$\scriptstyle |8\,\bar{6}\,3\,\bar{1}\, 13\,4|$}
      \put(20,90){$\scriptstyle |\bar{6}\,8\,\bar{2}\,\bar{1}\,13\,9|$}
      \put(80,90){$\scriptstyle |\bar{6}\,8\,\bar{2}\,9\,\bar{1}\,13|$}
      \put(140,90){$\scriptstyle |8\,\bar{6}\,\bar{2}\,\bar{1}\,9\, 13|$}
       \rouge{    \put(200,90){$\scriptstyle |8\,\bar{6}\,3\,4\,\bar{1}\, 13|$}}
       \rouge{    \put(250,90){$\scriptstyle |\bar{6}\, 8\,3\,4\, 13\, \bar{1}|$}}
        \rouge{ \put(198,98){\line(-2,5){16}}   }
        \rouge{ \put(-30,98){\line(5,1){200}}     } 
        \rouge{ \put(168,98){\line(-1,1){40}}     } 
        \rouge{ \put(220,98){\line(-4,1){160}}    } 
        \rouge{ \put(100,98){\line(-3,1){120}}      }
       \put(-65,98){\line(2,3){27}}   
       \put(-13,98){\line(-1,2){20}}  
      \put(-8,98){\line(3,4){30}} 
      \put(42,98){\line(-1,5){8}}  
     \put(48,98){\line(1,1){40}} 
      \put(106,98){\line(-1,5){8}}  
      %
      %
      \put(-50,140){$\scriptstyle |8\,\bar{6}\,\bar{2}\,\bar{1}\, 13\,9|$}
      \put(10,140){$\scriptstyle  |\bar{6}\,8\,\bar{2}\,9\,13\,\bar{1}|$}
      \put(70,140){$\scriptstyle |8\,\bar{6}\,\bar{2}\,9\,\bar{1}\,13|$}
     \rouge{ \put(130,140){$\scriptstyle |8\,\bar{6}\,3\,4\, 13\,\bar{1}|$}}
      \put(-31,148){\line(2,1){60}} 
       \put(33,148){\line(0,1){30}} 
       \put(86,148){\line(-3,2){45}} 
   \rouge{   \put(138,148){\line(-3,1){90}}   }
      \put(20,180){$\scriptstyle |8\,\bar{6}\,\bar{2}\,9\,13\,\bar{1} |$}
      \rouge{  \put(154,48){\line(1,3){13}}  }
      \rouge{  \put(153,48){\line(3,2){60}}  }   
      \rouge{  \put(203,98){\line(-3,2){60}} }
      \end{picture}} 
$$
In this example we see that there are three operators from $u=|\bar{6}\,8\,3\,\bar{1}\,4\,13|$ to $w=|8\,\bar{6}\,3\,\bar{1}\, 13\,4|$.
We have $u\t{\bar{5}\bar{4}}=u\t{12}=u\t{78}=w$ labeled by $\bar 4,2,8$, respectively. All other operators  evaluate to 0. For example $u\t{\overline{11}\,\overline{10}}=0$. 
\end{example}

When restricted to $0$-grassmannian permutations, the affine $0$-Bruhat order behaves well, as shown in the next lemma whose proof (for left coset)  can be consulted in~\cite[Prop. 2.6]{LLMS}. Therefore, our operators $\t{ab}$ are well defined.

\begin{lemma}\label{lem:0grass}
If $u\t{ab}=w$ and $u\in W^0$, then we have that $w\in W^0$. 
\end{lemma}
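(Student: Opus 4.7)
The plan is to transpose the problem by passing to the inverse permutation. Set $v=u^{-1}$; the hypothesis $u\in W^0$ translates into $v(1)<v(2)<\cdots<v(k+1)$, and $w^{-1}=t_{a,b}v$, so the goal is to show that $(t_{a,b}v)(1)<(t_{a,b}v)(2)<\cdots<(t_{a,b}v)(k+1)$.

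Because $v$ is an affine permutation, the integers $v(1),\ldots,v(k+1)$ have pairwise distinct residues mod $k+1$, so there are unique indices $\alpha,\beta\in\{1,\ldots,k+1\}$ with $v(\alpha)\equiv a$ and $v(\beta)\equiv b\pmod{k+1}$; write $v(\alpha)=a+M_\alpha(k+1)$ and $v(\beta)=b+M_\beta(k+1)$. The hypothesis $u(a)\le 0<u(b)$ immediately gives $M_\alpha\ge 1$ and $M_\beta\le 0$, and (since $b-a\le k$) $v(\alpha)\ge a+k+1>b\ge v(\beta)$, so $\alpha>\beta$. The transposition $t_{a,b}$ sends $v(\alpha)\mapsto v(\alpha)+(b-a)$ and $v(\beta)\mapsto v(\beta)-(b-a)$ while fixing every other $v(i)$, so the required monotonicity reduces, after handling the easy cases (e.g.\ $\alpha-1=\beta$, which follows from $v(\alpha)>v(\beta)$), to the two tight inequalities $v(\beta-1)<v(\beta)-(b-a)$ for $\beta\ge 2$ and $v(\alpha+1)>v(\alpha)+(b-a)$ for $\alpha\le k$.

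The heart of the argument is to obtain these two inequalities from the Bruhat cover condition of Proposition~\ref{prop:bruhatcover}: for all $a<j<b$, $u(j)<u(a)$ or $u(j)>u(b)$. Because $t_{a,b}$ depends only on $a,b$ modulo $k+1$, this condition holds verbatim with $(a,b)$ replaced by any shifted pair $(a+m(k+1),b+m(k+1))$. Applying it with $m=M_\beta$ yields $b+M_\beta(k+1)=v(\beta)$, $u(v(\beta))=\beta$, and $u(a+M_\beta(k+1))\le 0$; the value $\beta-1\ge 1$ lies strictly between these, so no position in $\big(a+M_\beta(k+1),\,v(\beta)\big)$ can take the value $\beta-1$. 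Combined with $v(\beta-1)<v(\beta)$ and the fact that $v(\beta-1)\ne a+M_\beta(k+1)$ (the $u$-value there is nonpositive), this forces $v(\beta-1)<a+M_\beta(k+1)=v(\beta)-(b-a)$. The second inequality is symmetric: the shift $m=M_\alpha$ together with the bound $u(b+M_\alpha(k+1))\ge k+2>\alpha+1$ rules out the value $\alpha+1$ from the interval $\big(v(\alpha),\,b+M_\alpha(k+1)\big)$, and since $v(\alpha+1)>v(\alpha)$ and $u(b+M_\alpha(k+1))\ne\alpha+1$, we conclude $v(\alpha+1)>b+M_\alpha(k+1)=v(\alpha)+(b-a)$.

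The main obstacle, though mild, is the bookkeeping of residues and of the shifts $M_\alpha,M_\beta$; once these are set up, the shifted Bruhat cover condition does all the work. The remaining edge cases $\beta=1$ and $\alpha=k+1$ eliminate one of the two tight inequalities entirely, and the degenerate case $b-a=k+1$ permitted by the operator definition does not arise here since $t_{a,b}$ fails to be an involution in that situation.
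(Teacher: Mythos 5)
Your argument is correct, and it is genuinely different from what the paper does: the paper offers no proof at all for this lemma, deferring instead to the left-coset version in \cite[Prop.~2.6]{LLMS}, whereas you give a self-contained verification. Your route --- pass to $v=u^{-1}$, observe that $u\in W^0$ means $v(1)<\cdots<v(k+1)$, locate the unique windows indices $\alpha>\beta$ whose $v$-values are congruent to $a$ and $b$, and reduce to the two tight inequalities $v(\beta-1)<v(\beta)-(b-a)$ and $v(\alpha+1)>v(\alpha)+(b-a)$ --- is sound, and the way you extract those inequalities from the shifted cover condition of Proposition~\ref{prop:bruhatcover} (taking $m=M_\beta$ to exclude the value $\beta-1$ from the interval $\bigl(a+M_\beta(k+1),v(\beta)\bigr)$, and $m=M_\alpha$ with the bound $u(b+M_\alpha(k+1))\ge k+2$ to exclude $\alpha+1$ from $\bigl(v(\alpha),b+M_\alpha(k+1)\bigr)$) checks out in every detail, including the signs $M_\alpha\ge1$, $M_\beta\le0$ coming from $u(a)\le0<u(b)$ and the dispatch of the easy cases $\alpha=\beta+1$, $\beta=1$, $\alpha=k+1$. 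What your approach buys is a proof readable entirely within the conventions of this paper (window notation, right cosets, the operators $\t{ab}$ acting on the right), with no translation from the left-coset statement in \cite{LLMS}; the cost is the residue and shift bookkeeping you acknowledge, which the citation route avoids. Your closing remark about $b-a=k+1$ correctly flags a small inconsistency between the paper's definition of $t_{a,b}$ (which requires $b-a\le k$) and the stated range $b-a\le k+1$ for the operators $\t{ab}$; as you say, that degenerate case cannot produce a Bruhat cover and so does not affect the lemma.
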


At this point, there are a few questions we would like to answer regarding the monoid ${\mathcal M}\langle \t{ab} \rangle$ generated by the operators $\t{ab}$.
The main questions are: 
\begin{enumerate}
\item[(I)] Can we describe all the relations satisfied by the operators $\t{ab}$ (as in Proposition~\ref{prop:relSchub})?
\item[(II)] Is there a combinatorial object that characterizes all the elements of ${\mathcal M}\langle \t{ab} \rangle$ (as in Corollary~\ref{cor:fixr})? 
\item[(III)] Can we define Pieri operators $H_k$ related to the multiplication ${\mathfrak S}_u h_m$?
\item[(IV)] Can we find a good expression for $H_k$ as in Proposition~\ref{prop:cyclpieri}?
\item[(V)] Is the algebra spanned by the $H_k$ isomorphic $Sym_{(k)}$ (as in Theorem~\ref{thm:youngop})?
\item[(VI)] What is the analogue of Theorem~\ref{THM:Kuw}?
\item[(VII)] Can we show combinatorially the positivity of the structure constants in the product  ${\mathfrak S}_u h_m$  as done in Section~\ref{sec:pos}?
\end{enumerate}
We have some partial answers to question (I) that we will discuss next. Questions (II) and (IV) seem very difficult at this point and are still open.
Questions (III), (V) and (VI) are done in the literature (see~\cite{Lam,benedettib}), although (V) is not stated as it is here. We are in the process of solving question (VII); this involves 
analyzing 3, 4, 5, and 6-tuples of the operators $\t{ab}$ . The number of possibilities are much greater than the situation in Section~\ref{sec:pos}
 and will be available in subsequent work. 

\subsection{Relations of the operators $\t{ab}$.}\label{sec:rel0bruhat}    
The purpose of this section is to understand some of the relations satisfied by the $\t{ab}$ operators restricted to $W^0$. 
Our main goal at this point is not to understand all the defining relations, but to find enough that will allow us to answer question (VII).
Answering question (II) is a very worthwhile project for future work.
Most of the relations we present here were given and proven in~\cite{benedettib}.
The relations  depend  on the following data: for $\t{ab}$ we need to consider $a,b,\overline{a},\overline{b}$ where $\overline{a}$ and $\overline{b}$ are the residue modulo $k+1$ of $a$ and $b$ respectively. Remark that  $\overline{a}\ne \overline{b}$ since $b-a<k+1$. Let $u\in W^0$. Lemma~\ref{lem:0grass} implies  that, if non-zero, $u\t{ab}$ and $u\t{ab}\t{cd}$ are both in $W^0$.  The different relations satisfied by the operators $\t{ab}$ and $\t{cd}$ depend on the relation among $\overline a, \overline b, \overline c, \overline d$. For this reason it is useful to visualize these operators as follows.  

$$
\qquad \raise 10pt\hbox{ \begin{picture}(360,110)
     \put(30,0){$c$} \put(60,0){$d$}  \put(230,0){$a$} \put(280,0){$b$}     
     \put(91,10){$\underbrace{\phantom{|\bar{6}\,8\,3\,\qquad \,\,\,\,\bar{1}\,4\,13|}}_{\text{main}}$}
     \put(-40,12){$u$} \put(0,15){\vector(1,0){360}} \put(90,13){\line(0,1){4}} \put(180,13){\line(0,1){4}}
     \put(-40,52){$u\t{ab}$} \put(0,55){\vector(1,0){360}} \put(90,53){\line(0,1){4}} \put(180,53){\line(0,1){4}}
     \put(-40,92){$u\t{ab}\t{cd}$} \put(0,95){\vector(1,0){360}} \put(90,93){\line(0,1){4}} \put(180,93){\line(0,1){4}}
                      \put(50,35){\line(1,0){50}} \put(140,35){\line(1,0){50}} \put(320,35){\line(1,0){30}} \put(0,35){\line(1,0){10}} 
                      \put(120,75){\line(1,0){30}} \put(210,75){\line(1,0){30}}   \put(300,75){\line(1,0){30}} 
     \linethickness{1mm}
                      \put(230,35){\line(1,0){50}}
                      \multiput(50,35)(5,0){11}{\line(1,0){2}} \multiput(140,35)(5,0){11}{\line(1,0){2}} \multiput(320,35)(5,0){9}{\line(1,0){2}} 
                      \multiput(-10,35)(5,0){5}{\line(1,0){2}}
                      \put(30,75){\line(1,0){30}}
                      \multiput(120,75)(5,0){7}{\line(1,0){2}}  \multiput(210,75)(5,0){7}{\line(1,0){2}}  \multiput(300,75)(5,0){7}{\line(1,0){2}} 
      \end{picture}} 
$$

Above the permutation $u$, the operator $\t{ab}$ is represented by drawing a bold line connecting positions $a,b$ and repeating this pattern to the left and to the right in all positions congruent to $a,b$ modulo $k+1$. Next we apply $\t{cd}$  to the resulting permutation, drawing a bold line connecting positions $c,d$ and repeating that pattern modulo $k+1$. The importance of visualizing not only the bold line but also the dotted ones, relies on the fact that even if in the diagram, the line representing $\t{ab}$ does not intersect the line representing $\t{cd}$. Their ``virtual" copies (or dotted copies) might intersect and this will determine the commutation relation satisfied by these operators. Therefore, it will be important to also consider the pattern produced by these two operators in the main window.

With these definitions in mind we present some of the relations satisfied by the $\mathbf{t}$ operators restricted to $W^0$ (there are less relations if we consider all of $W$).

\smallskip
\noindent {\bf (A)} $\t{ab}\t{cd}\equiv \t{cd}\t{ab}$ \qquad if $\overline{a},\overline{b},\overline{c},\overline{d}$ are distinct. 

\medskip
\noindent {\bf (B1)} $\t{ab}\t{cd}\equiv \t{cd}\t{ab}\equiv 0$ \qquad if ($a<c<b<d$) or ($b=c$ and $d-a>k+1$).

\medskip
\noindent {\bf (B2)} $\t{ab}\t{cd}\equiv 0$ \qquad if ($\overline{a}=\overline{c}$ and $b\le d$) or ($\overline{b}=\overline{d}$ and $c\le a$).

\medskip

\noindent There are more possible zeros than what we present in (B). 
If the numbers $a,b,c,d$ are not distinct, then we must have $b=c$ or $d=a$. If $b=c$, then $d-a\le k+1$ in view of (B). Similarly if $d=a$ then $b-c\le k+1$. 

\medskip
\noindent {\bf (C)} $\t{ab}\t{bd}=\t{ab}\t{b-k-1,a}$ \qquad if $d-a=k+1$,

\medskip
\noindent  if $d-a<k+1$ then there is no relation between $\t{ab}\t{bd}$ and  $\t{bd}\t{ab}$.
Now we look at the cases $\t{ab}\t{cd}$ where $a,b,c,d$ are distinct but some equalities exists between $\overline{a},\overline{b}$ and $\overline{c},\overline{d}$. By symmetry of the relation we will assume that $b<d$, which (excluding (B)) implies that  $a<b<c<d$. 

\medskip
\noindent {\bf (D)} $\t{ab}\t{cd}=\t{d-k-1,c}\t{b-k-1,a}$ \qquad if $\overline{b}=\overline{c}$,\ \  $\overline{d}=\overline{a}$ and $(b-a)+(d-c)=k+1$.

\medskip
All the relations above are {\sl local}. This means that if $\t{ab}\t{cd}=\t{c'd'}\t{a'b'}$, then $|a'-a|$, $|b'-b|$, $|c'-c|$ and $|d'-d|$ are strictly less than $k+1$. For example, in (D) we have $|b-k-1-a|$, $|a-b|$, 
$|d-k-1-c|$ and $|c-d|$ which are strictly less than $k+1$. 

\begin{remark}
The relations we care about in this paper and its sequel are all local. There are some relations that are not local:
$$ \t{ab}\t{cd}=\t{a-k-1,b-k-1}\t{cd}=\t{a+k+1,b+k+1}\t{cd},$$
if $c<a<b<d$. The full description of  the relations of the operators $\t{}$ is rather complicated. It would take too much space here and are not all understood.
\end{remark}

We now consider some more relations of length three:

\medskip
\noindent {\bf (E1)} $\t{bc}\t{cd}\t{ac}\equiv \t{bd}\t{ab}\t{bc}$  \qquad if $a<b<c<d$,

\smallskip
\noindent {\bf (E2)} $\t{ac}\t{cd}\t{bc}\equiv \t{bc}\t{ab}\t{bd}$  \qquad if $a<b<c<d$. 

\medskip\noindent
also we have

\smallskip
\noindent {\bf (F)} $\t{bc}\t{ab}\t{bc}\equiv \t{ab}\t{bc}\t{ab}  \equiv\ {\bf 0}$  \qquad if $a<b<c$ and $c-a<k+1$.  

\begin{remark} \label{rem:addedrel}
\medskip
If we fix a permutation $u$ we can derive more relations of length 2. Let $r=|b-a|+|d-c|$:

\medskip
\noindent {\bf (X1)} $u\t{ab}\t{cd}=u\t{d,c+r}\t{b-r,a}$ \qquad if $r<k+1$, \ $\overline{d}=\overline{a}$, \ $u(c)\le 0$ and $u(d)\le 0$,

\smallskip
\noindent {\bf (X2)} $u\t{ab}\t{cd}=u\t{cd}\t{b-r,b}$ \qquad if $r<k+1$, \ $\overline{d}=\overline{a}$ and $u(d)> 0$,

\smallskip
\noindent {\bf (X3)} $u\t{ab}\t{cd}=u\t{d-r,d}\t{ab}$ \qquad if $r<k+1$, \ $\overline{b}=\overline{c}$ and $u(a+r)\le 0$,

\smallskip
\noindent {\bf (X4)} $u\t{ab}\t{cd}=u\t{d-r,c}\t{b,a+r}$ \qquad if $r<k+1$, \ $\overline{b}=\overline{c}$, \  $u(b)>0$  and $u(a+r)> 0$, 

\smallskip
\noindent {\bf (X5)} $u\t{ab}\t{cd}=u\t{cd}\t{a,b+c-d}$ \qquad if $\overline{b}=\overline{d}$, \  $b-a>d-c$ and $u(d-b+a)> 0$,

\smallskip
\noindent {\bf (X6)} $u\t{ab}\t{cd}=u\t{c,d-b+a}\t{a,b}$ \qquad if $\overline{b}=\overline{d}$, \  $b-a<d-c$ and $u(a)\le 0$. 

\medskip\noindent
In the (X) relations, the conditions we impose on $u$ are minimal to assure that both sides of the equality are non-zero. These conditions are not given by the definition of the operators $\t{ab}$.
For example in (X1), the left hand side is non-zero regardless of the value of $u(d)$ but to guarantee that the right hand side is non-zero, we must have $u(d)\le 0$.
This shows that as operators $\t{ab}\t{cd}\ne \t{d,c+r}\t{b-r,a}$. 
\end{remark}

\subsection{Multiplication of dual $k$-Schur.}
For dual $k$-Schur functions ${\frak S}^{(k)}_u$, the analogue of the Pieri formula~(\ref{eq:PierikSchur}) is given by
 \begin{equation}\label{eq:PieridualkSchur}
   {\frak S}_u^{(k)} h_m:=\ \sum_{u\t{a_1b_1}\cdots\t{a_mb_m}\ne 0 \atop b_1<b_2<\ldots <b_m}{\frak S}_{u\t{a_1b_1}\cdots\t{a_mb_m}}^{(k)},
\end{equation}
where the sum is over all increasing paths $b_1<b_2<\cdots <b_m$ starting at $u$~\cite{LLMS}.
Since the Pieri formula is encoded by increasing composition of operators in the affine $0$-Bruhat order restricted to $W^0$, we can define Pieri operators similar to equation~\eqref{eq:PieriopSchub}
using increasing composition of operators $\t{ab}$. 
We can then define a Pieri operator
 \begin{equation}\label{eq:PieriopkSchub}
 H_m= \sum_{b_1<b_2<\ldots<b_k \atop a_i < b_i} \t{a_1b_1}\t{a_2b_2}\cdots\t{a_mb_m}.
 \end{equation}
Many terms in this sum may be zero. At this point we do not have a good description of the terms that survive or how to express the non-zero terms as in Proposition~\ref{prop:cyclpieri}.
The definition of the operator $H_m$ in this case allows us to see that

By definition of $H_k$ and equation~\eqref{eq:PieridualkSchur}, we have 
\begin{equation}\label{eq:symkH}
 \begin{array}{lr}
  wH_b H_a = \sum_{\nu} d_{w,(a,b)}^u u \hfill &\cr
 \qquad \qquad   \iff  &{\mathfrak S}_w h_a  h_b  =\sum_{u} d_{w,(a,b)}^u {\mathfrak S}_u\,.\cr
 \end{array}
 \end{equation}
In particular, for all $w$ we have  $H_b H_a(w)=H_a H_b(w)$ since $h_ah_b =h_b  h_a $.
\begin{theorem}\label{thm:kBruhatop}
 The algebra  ${\bf B}\langle H_{k}\rangle$ spanned by $\{H_1, H_2, \ldots,H_k\}$ as operators on the $k$-affine Bruhat order restricted to $W^0$ is isomorphic to $Sym_{(k)}$.
\end{theorem}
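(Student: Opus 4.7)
The plan is to mirror the proof of Theorem~\ref{thm:rBruhatop}. Commutativity $H_aH_b=H_bH_a$ as operators on $\mathbb{Z}W^0$ is immediate from equation~\eqref{eq:symkH}: for any $w\in W^0$, the coefficient of $u$ in $H_aH_b(w)$ equals the coefficient of $\mathfrak{S}_u^{(k)}$ in the product $\mathfrak{S}_w^{(k)}h_ah_b$, which is symmetric in $a$ and $b$. Hence the two operators agree pointwise on the basis $W^0$, so they agree as operators on $\mathbb{Z}W^0$.

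Once commutativity is established, $\mathbf{B}\langle H_k\rangle$ is spanned by the monomials $H_\lambda:=H_{\lambda_1}\cdots H_{\lambda_\ell}$ as $\lambda$ ranges over partitions with $\lambda_1\le k$. Since $Sym_{(k)}=\mathbb{Z}[h_1,\ldots,h_k]$ is the free commutative polynomial ring on these generators, the assignment $h_m\mapsto H_m$ extends to a well-defined surjective algebra homomorphism
$$\Phi\colon Sym_{(k)}\twoheadrightarrow \mathbf{B}\langle H_k\rangle.$$
To conclude, I would show $\Phi$ is injective by proving that the $H_\lambda$ are linearly independent operators. Evaluate at $\mathrm{Id}\in W^0$: iterating the Pieri rule~\eqref{eq:PieridualkSchur} from $\mathfrak{S}_\mathrm{Id}^{(k)}=1$ yields
$$H_\lambda(\mathrm{Id})=\sum_u d_\lambda^u\cdot u,$$
where the coefficients $d_\lambda^u$ are precisely those in the expansion $h_\lambda=\sum_u d_\lambda^u \mathfrak{S}_u^{(k)}$ of the image of $h_\lambda$ in $Sym^{(k)}$. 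Any relation $\sum_\lambda c_\lambda H_\lambda=0$ applied to $\mathrm{Id}$ then translates into $\sum_\lambda c_\lambda h_\lambda=0$ in $Sym^{(k)}$.

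The main obstacle is to argue that this last equation forces all $c_\lambda=0$, that is, that the images of $\{h_\lambda:\lambda_1\le k\}$ under the natural map $Sym_{(k)}\hookrightarrow Sym\twoheadrightarrow Sym^{(k)}$ remain linearly independent. By Definition~\ref{def:kschu}, $\{h_\lambda:\lambda_1\le k\}$ is already a basis of $Sym_{(k)}$ via the unitriangular matrix $[K_{\lambda,u}]$, and $Sym_{(k)}$ and $Sym^{(k)}$ share the same graded dimensions (the number of $k$-bounded partitions in each degree), so it suffices to establish injectivity of the natural map. This I would get from triangularity of the Pieri rule, namely $h_\lambda$ maps to $\mathfrak{S}_{v(\lambda)}^{(k)}+(\textrm{strictly smaller terms})$, where $v(\lambda)\in W^0$ is the $0$-grassmannian canonically indexed by $\lambda$. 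With this unitriangular change of basis the $H_\lambda(\mathrm{Id})$ are linearly independent, hence so are the $H_\lambda$, and $\Phi$ is the desired isomorphism.
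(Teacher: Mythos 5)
Your proposal is correct and follows essentially the same route as the paper: commutativity of the $H_m$ is deduced from $h_ah_b=h_bh_a$ via the Pieri rule~\eqref{eq:PieridualkSchur}, and linear independence of the $H_\lambda$ is obtained by evaluating at $\mathrm{Id}$ and invoking the triangular expansion of $h_\lambda$. The only cosmetic differences are that you establish $H_aH_b=H_bH_a$ pointwise on the basis $W^0$ (which suffices for an algebra of operators, whereas the paper compares coefficients of the surviving words $\omega$), and that you phrase the evaluation coefficients as the expansion of $h_\lambda$ in the dual $k$-Schur functions ${\frak S}_u^{(k)}$, which is the accurate version of what the paper writes as an expansion in Schur polynomials.
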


\proof
As we multiply $H_mH_n$ and $H_nH_m$, some terms  go to zero and others  survive. The terms that survive in  $H_mH_m$  are  of the form 
 $$ \omega=\t{a_1b_1}\t{a_2b_2}\cdots\t{a_mb_m}\t{c_1d_1}\t{c_2c_2}\cdots\t{a_nb_n}.$$
 where $b_1<b_2<\ldots<b_k$ and $d_1<d_2<\ldots<d_n$. Let $d_{(a,b)}^\omega$ be the coefficient of $\omega$ in $H_mH_n$. 
Since $\omega\ne \bf 0$, there is a $u\in W^0$ such that $u\omega=v\ne 0$.
As before, for all $\omega$, we have 
  $$ d_{(a,b)}^\omega = \mbox{Coeff of $v$ in $H_aH_b(u)$} = \mbox{Coeff of $v$ in $H_bH_a(u)$} = d_{(b,a)}^\omega. $$
 Hence $H_aH_b=H_bH_a$.
 
The algebra ${\bf B}\langle H_{k}\rangle$ is clearly spanned by $H_\lambda=H_{\lambda_1}\cdots H_{\lambda_\ell}$ where $\lambda$ runs over all partitions.
Again, we only need to show that the $H_\lambda$'s are linearly independent.
Using the definition of the $H_m$, we have that
${\rm Id}H_\lambda=\sum_\mu d_{\lambda}^{\mu} v_\mu$  where $v_\mu$ is the unique $0$-grassmannian permutation with shape $\mu$ and the $d_{\lambda}^{\mu}$ satisfy
$$h_\lambda(x_1,\ldots,x_r)=\sum_\mu d_{\lambda}^{\mu} s_\mu(x_1,\ldots,x_r).$$
As we have seen in the proof of Theorem~\ref{thm:rBruhatop} this implies the linear independence of the $H_\lambda$.
\qed

As in Section~\ref{sec:K}, let $\langle v,w\rangle=\delta_{v,w}$ define a scalar product on ${\mathbb Z}W^0$. 
For a $u<w$ in the $0$-Bruhat order, we  define the quasisymmetric function
\begin{equation}\label{eq:kSchurKuw}  
    K_{[u,w]} =\sum_\alpha \langle uH_\alpha,w\rangle M_\alpha.
\end{equation}
Again, since $H_aH_b=H_bH_a$, the function $K_{[u,w]} $ is in fact a symmetric function. As shown in \cite{benedettib,bmws} 
\begin{theorem}\label{THM:sSchurKuw}
    $$K_{[u,w]} = \sum_\alpha \langle uR_\alpha,w \rangle F_\alpha = \sum_\mu c_{u,\mu}^w s_\mu\,,$$
    where $ c_{u,\mu}^w$ are defined by
    $$ {\mathfrak S}_u^{(k)} s_\mu = \sum_w   c_{u,\mu}^w  {\mathfrak S}_w^{(k)}\,.$$
Moreover for $\alpha$ a composition of $n$, we have that $\langle uR_\alpha,w \rangle$ count the number of compositions $\omega=\t{a_1b_2}\t{a_2b_2}\cdots\t{a_mb_m}$ such that $u\omega=w$ and 
 $b_i>b_{i+1}$ if and only if $i\in D(\alpha)$.
\end{theorem}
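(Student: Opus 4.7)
The plan is to mirror the proof of Theorem~\ref{THM:Kuw}, now in the affine setting. For the first equality, the prescription $\HH_\alpha\cdot u := uH_\alpha$ makes $\mathbb{Z}W^0$ into an $NSym$-module (which factors through $Sym_{(k)}$ by Theorem~\ref{thm:kBruhatop}); the standard identity
$$K_{[u,w]} = \sum_\alpha \langle u\cdot \mathbf{x}_\alpha, w\rangle Y_\alpha,$$
valid for any dual pair $(\mathbf{x}_\alpha, Y_\alpha)$ of $NSym$--$QSym$ bases, applied to $(R_\alpha, F_\alpha)$ then gives $K_{[u,w]} = \sum_\alpha \langle uR_\alpha, w\rangle F_\alpha$.

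For the Schur expansion, Theorem~\ref{thm:kBruhatop} provides $H_aH_b = H_bH_a$, so $K_{[u,w]}$ is symmetric and equals $\sum_\mu \langle uH_\mu, w\rangle m_\mu$. By~\eqref{eq:symkH}, the coefficient $\langle uH_\mu, w\rangle = d_{u,\mu}^w$ is the coefficient of $\mathfrak{S}_w^{(k)}$ in $\mathfrak{S}_u^{(k)}h_\mu$. The Hopf quotient $Sym^{(k)} \cong Sym/\langle m_\lambda : \lambda_1 > k\rangle$ inherits a $Sym$-action, so $\mathfrak{S}_u^{(k)} s_\mu$ is well defined. Since under the Hall pairing on $Sym$ the bases $\{h_\mu\}$--$\{m_\mu\}$ are dual and $\{s_\mu\}$ is self-dual, expanding $s_\mu$ in the $h$-basis and applying the linear functional $f \mapsto \mathrm{coeff}_{\mathfrak{S}_w^{(k)}}(\mathfrak{S}_u^{(k)}\cdot f)$ shows that the coefficient of $s_\mu$ in $K_{[u,w]} = \sum_\nu d_{u,\nu}^w m_\nu$ is exactly $c_{u,\mu}^w$.

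For the counting statement, a routine inclusion-exclusion does the job. Directly from the definition~\eqref{eq:PieriopkSchub} of $H_m$ and the product decomposition $H_\alpha = H_{\alpha_1}\cdots H_{\alpha_\ell}$, the coefficient $\langle u\HH_\alpha, w\rangle$ counts compositions $\omega = \t{a_1b_1}\t{a_2b_2}\cdots\t{a_nb_n}$ with $u\omega = w$ whose $b$-sequence has descents \emph{contained in} $D(\alpha)$. The alternating expansion $R_\alpha = \sum_{\beta \geq \alpha}(-1)^{\ell(\alpha)-\ell(\beta)}\HH_\beta$ from~\eqref{def:Ribbon} then refines ``contained in'' to ``equal to,'' yielding exactly the stated condition $b_i > b_{i+1}$ iff $i \in D(\alpha)$.

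I expect the genuine obstacle to lie not in these formal manipulations but in the underlying Theorem~\ref{thm:kBruhatop} on which they rest: without a known complete set of defining relations for $\mathcal{M}\langle\t{ab}\rangle$ (open questions (I)--(II) of the paper), the commutativity $H_aH_b = H_bH_a$ of the Pieri series cannot be established by relation manipulation as was done in Theorem~\ref{thm:youngop}, and one must instead argue coefficient-by-coefficient using a witness $u\in W^0$ with $u\omega \neq 0$, as in Theorem~\ref{thm:rBruhatop}. Once that commutativity is granted, the remainder of the proof proceeds formally as above.
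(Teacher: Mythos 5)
Your proposal is correct and takes essentially the intended approach: the paper itself gives no proof here (it defers to \cite{benedettib,bmws}), but your argument is a faithful transplant of the paper's proof of the classical analogue, Theorem~\ref{thm:Kmulambda} --- duality of the $R_\alpha$/$F_\alpha$ bases for the first equality, the $h_\mu$--$m_\mu$ duality together with self-duality of the Schur basis for the second, and inclusion--exclusion on the ribbon expansion \eqref{def:Ribbon} for the chain-counting statement. You also correctly identify that the only substantive input is the commutativity $H_aH_b=H_bH_a$ of Theorem~\ref{thm:kBruhatop}, which the paper indeed establishes by the coefficient-wise witness argument (as in Theorem~\ref{thm:rBruhatop}) rather than by manipulating a complete set of relations.
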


\begin{example} \label{ex:0Bruhat2}
Considering the interval $[u,w]=[|\bar{6}\,8\,3\,\bar{1}\,4\,13|,|8\,\bar{6}\,\bar{2}\,9\,13\,\bar{1}|]$ 
from Example~\ref{ex:0Bruhat}. The total number of composition of operators is  $240$. In this case
  $$K_{[u,w]} =9 F_{1111}+ 30F_{112}+ 51F_{121} + 30F_{13} + 30F_{211}+ 51F_{22} + 30F_{31} + 9F_4 \,,$$
is symmetric and the expansion in term of Schur functions is positive
  $$K_{[u,w]} =9S_4+30S_{31}+21S_{22}+30S_{211}+9S_{1111}\,.$$
\end{example}

\subsection{Comments on the combinatorial proof of the positivity of  $c_{u,\mu}^w$}  

If one considers an interval $[u,w]$ of rank 3 
and computes $K_{[u,w]}$, then by Theorem~\ref{eq:kSchurKuw} the coefficient of $F_{21}$ and $F_{12}$ must be the same in $K_{[u,w]}$. This means that every time we have a descent
followed by an ascent in a chain, we must have another chain with an ascent followed by a descent. This should be reflected in relations like (X) and could depend on $u$.
To acheive a result similar to~\cite{ABS} for $K_{[u,w]}$, one needs to  first to build a full set of relations of length 3 that pairs every ascent-descent type to a descent-ascent. This cannot be done independently from $u$.
The purpose of this will be to define Dual-Knuth operations on the maximal chains in intervals $[u,w]$ in order to construct dual graphs as in~\cite{A_JAMS}.

We give here a partial list of the relations of length 3 that would be the analogue  for dual $k$-Schur of (A)--(B)--(C) in Section~\ref{sec:pos}.
The complete full list of 3-relations needed is too long for this survey. In future work, we will need to show that the corresponding $\phi_i$ defined by those relations satisfy the axioms of \cite{A_JAMS}.
This is a long analysis that will appear in subsequent work. This will show that the monoid defined by the $\t{ab}$ behaves like the monoid of Section~\ref{sc:New}, even if it does not satisfy
the Fomin and Greene's hypothesis. This shows that these monoids are worthwhile to investigate.

We have already listed some of the relations satisfied by triplets of operators $\t{ab}$. Relations \textbf{(A),(E1),(E2),(F)} resemble the relations listed in (\ref {eq:relschub}). However, as noted before in the case of the operators $\t{ab}$, more relations can be derived making them more complex that the $\u{ab}$ operators.

\begin{equation*}
\begin{array}{clrclll}
(1a)&&\t{ab}\t{cd}\t{ec}&\equiv&\t{ec}\t{a,b-|c-e|}\t{ed},\hfill&&
        \hbox{if $\a<b<e<c<\d$ and $\bar a=\bar d<\bar e<\bar b=\bar c$}\hfill\\
(1b)&&\t{ab}\t{cd}\t{ec}&\equiv&\t{\bar d c}\t{\bar b a}\t{ec},\hfill&&
        \hbox{if $\a<b<e<c<\d$ and $\bar a=\bar d=\bar e,\, \bar b=\bar c$}\hfill\\        
(1c)&&\t{ab}\t{cd}\t{ef}&\equiv&\t{ef}\t{ab}\t{cd},\hfill&&
        \hbox{if $\a<b<c<e<f<\d$ and $\bar a=\bar d,\, \bar b=\bar c$}\hfill\\         
 (1d)&&\t{ab}\t{bc}\t{db}&\equiv&\t{db}\t{ad}\t{dc},\hfill&&
        \hbox{if $\a<d<b<c$ and $\bar a=\bar c$}\hfill\\     
(1e)&&\t{ab}\t{bc}\t{db}&\equiv&\t{ab}\t{b-m,c-m}\t{db},\hfill&&
        \hbox{if $\a=d<b<c$ and $\bar a=\bar c,\, m=k+1$}\hfill\\     
              
 \end{array}
 \end{equation*}       
 
In analogy with relations \textbf{(X1)-(X6)}, let us list more relations that depend on the permutation $u$ we apply them to. Let $r=|d-c|+|b-a|<k+1$       
  \begin{equation*}
  \begin{array}{clrclll}      
(2a)&u\t{ab}\t{cd}\t{ef}\equiv&u\t{d,c+s}\t{b-s,a}\t{ef},\hfill&
        \hbox{if $\a<b<e<f\leq c<\d$, $\bar a=\bar d,\, u(c)\leq 0,\,u(d)\leq 0$}\hfill\\
(2b)&u\t{ab}\t{cd}\t{ef}\equiv&u\t{cd}\t{b-r,b}\t{ef},\hfill&
        \hbox{if $\a<b<e<f\leq c<\d$, $\bar a=\bar d,\, u(d)>0$}\hfill\\

(3a)&u\t{ab}\t{cd}\t{ef}\equiv&u\t{d-r,c}\t{b,a+r}\t{ef},\hfill&
        \hbox{if $\a<b<e<f\leq c<\d$,$\bar b=\bar c$, $\bar e\geq \bar d,\,u(a+r)>u(b)>0$}\hfill\\
(3b)&u\t{ab}\t{cd}\t{ef}\equiv&u\t{d-r,d}\t{ab}\t{ef},\hfill&
        \hbox{if $\a<b<e<f\leq c<\d$, $\bar b=\bar c$,\, $\bar e\geq \bar d,\,u(a+r)\leq0$}\hfill\\
(4a)&u\t{ab}\t{cd}\t{eb}\equiv&u\t{eb}\t{ae}\t{c-|b-e|,d},\hfill&
        \hbox{if $\a<e<b<c<\d$, $\bar b=\bar c,\bar e>\bar a,\, u(a+r)>u(b)>0$}\hfill\\     
\end{array}
\end{equation*}
\vskip -15pt 
$$ \,\,(4b) \quad u\t{eb}\t{ae}\t{c-|b-e|,d}\equiv u\t{eb}\t{d-r,d}\t{ab},\,
        \hbox{if $\a<e<b<c<\d$, $\bar b=\bar c,\bar e>\bar a,\, u(a+r)\leq0$}$$          

If the reader represents these relations as a system of bars, they can be interpreted as exchanging an ascent-descent by a descent-ascent. As an example, putting $b'=b-|c-e|$ in relation $(1a)$ we can represent it graphically as 

$$
\setlength{\unitlength}{.5cm}
\begin{picture}(12,5)
\thicklines
\put(1,2){\line(-2,0){2}}
\put(5,3){\line(-2,0){2}}
\put(3.1,4){\line(-2,0){1.5}}
\put(-1.1,1.5){$a$}\put(0.6,1.4){$b$}
\put(3,2.5){$c$}\put(4.5,2.4){$d$}
\put(1.6,3.5){$e$}\put(2.9,3.5){$c$}
\put(5.6,2.8){$\equiv$}
\put(8.5,3){\line(-2,0){2}}
\put(11.7,2){\line(-2,0){2}}
\put(12.4,4){\line(-2,0){2.8}}
\put(6.4,2.4){$a$}\put(8.1,2.3){$b'$}
\put(9.7,1.5){$e$}\put(11.2,1.5){$c$}
\put(9.7,3.5){$e$}\put(11.9,3.4){$d$}
\end{picture}$$

Next we list more ascent-descent relations equivalent to descent-ascent. This is not an exhaustive list but it gives a good sense of the behaviour of these operators.

  \begin{equation*}
  \begin{array}{clrclll}      
(6a)&&u\t{ea}\t{ab}\t{cd}&\equiv&u\t{eb}\t{c,d-|a-e|}\t{ea},\hfill&&
        \hbox{if $c<d<e<a<b$, $\bar c<\bar e,\,\bar a=\bar d,\, u(b-r)\leq 0$}\hfill\\
(6b)&&u\t{ea}\t{ab}\t{cd}&\equiv&u\t{eb}\t{c,c+r}\t{ab},\hfill&&
        \hbox{if $c<d<e<a<b$, $\bar c\leq e,\,\bar a=\bar d,\, u(b-r)>0$}\hfill\\        
(6c)&&u\t{ea}\t{ab}\t{cd}&\equiv&u\t{eb}\t{d,c+r}\t{b-r,a},\hfill&&
        \hbox{if $c<d<e<a<b$, $\bar c>\bar e,\,\bar a=\bar d,\, u(b-r)\leq 0$}\hfill\\
(6d)&&u\t{ea}\t{ab}\t{cd}&\equiv&u\t{eb}\t{d-r,c}\t{b,a+r},\hfill&&
        \hbox{if $c<d<e<a<b$, $\bar c\neq \bar e\leq \bar d,\,\bar b=\bar c,\, u(c)>0$}\hfill\\        
(6e)&&u\t{ea}\t{ab}\t{cd}&\equiv&u\t{eb}\t{cd}\t{a,a+r},\hfill&&
        \hbox{if $c<d<e<a<b$, $\bar c\neq \bar e\leq \bar d,\,\bar b=\bar c,\, u(c)\leq 0$}\hfill\\        
\end{array}
\end{equation*}

We encourage the reader to draw the corresponding diagrams of the given relations together with their \emph{virtual} copies in order to realize what these relations look like and understand better the interaction of these triplets. A full understanding of the relations satisfied by tuples of the operators $\t{ab}$ will lead us to describe connected components of these relations. This is work in progress that we aim to use, for instance, to solve question (VII) as stated before.

\begin{remark}
In a recent paper, Assaf and Billey  \cite{AB} have constructed involutions $\phi_i$ on the so called star-tableaux. Such involutions preserve the spin statistic. 
Star-tableaux are equivalent to non-zero sequences of operators $\t{ab}$
acting on the identity 0-grassmannian permutation $Id$. These transformations $\phi_i$ are strongly related to the relations we study satisfied by triplets $\t{ab}$. Showing that these triplets also satisfy the spin statistic will in fact give us a much stronger positive result.
We expect to include this as well in future work.
\end{remark}


\begin{thebibliography}{9}

  

\bibitem[ABS]{ABS}
M. Aguiar, N. Bergeron and F. Sottile,
\emph{Combinatorial Hopf Algebra and generalized Dehn-Sommerville relations\/},
Compositio Mathematica {\bf 142-1} (2006)  pp 1--30. 

\bibitem[Assaf]{A_JAMS}
S. Assaf, {\it combinatorial proof of LLT and Macdonald positivity}.
submitted (arXiv:1005.3759).

\bibitem[AB]{AB}
S. Assaf and Sara Billey
\emph{Affine dual equivalence and k-Schur functions}. (arXiv:1201.2128).

\bibitem[ABF]{assafbsottile}
S. Assaf, N. Bergeron and F. Sottile,
\emph{On a Positive Combinatorial Construction of  Schubert Coefficients: Schubert vs Grassmanian} [in preparation] (2012).

\bibitem[Beligan]{beligan}
M. Beligan {\it Insertion for Tableaux of Transpositions, A Generalization of SchenstedÕs Algorithm}, Ph. D. Thesis, York University (2007) 109p.

\bibitem[BB12]{benedettib}
C. Benedetti and N. Bergeron,
\emph{Schubert Polynomials and $k$-Schur functions} submitted (2012) (arXive 1209.4956 ).

\bibitem[BBTZ]{BBTZ}
C. Berg, N. Bergeron, H. Thomas and M. Zabrocki, 
\emph{Expansion of k-Schur functions for maximal k-rectangles within the affine nilCoxeter algebra}, arXiv:1107.3610.

\bibitem[BLL]{BLL}
N. Bergeron, T. Lam, and H. Li,
\emph{Combinatorial Hopf algebras and towers of algebras -- dimension, quantization and functorality}, 
Algebr. Represent. Theory {\bf 15-4} (2012)  675--696.

\bibitem[BMSW]{bmws}
N. Bergeron, S. Mykytiuk, F. Sottile, and S. van Willigenburg,
\emph{Pieri Operations on Posets}, 
J. of Comb. Theory, Series A {\bf 91} (2000) 84--110.

\bibitem[BS98]{bsottileschub}
N. Bergeron and {F. Sottile},
\emph{Schubert polynomials, the Bruhat order, and the geometry of flag manifolds},
 Duke Math. J.  {\bf 95-2} (1998)  373--423.
%

\bibitem[BS99Mono]{bsottilemonoid}
N. Bergeron and F. Sottile,
\emph{A monoid for the Grassmannian-Bruhat order},
Europ. J. Combinatorics {\bf 20} (1999) 197--211.

\bibitem[BS02]{bsottileskew}
N. Bergeron and F. Sottile,
\emph{Skew Schubert Functions and the Pieri Formula for the Flag Manifolds},
Trans. Amer. Math. Soc. {\bf 354} (2002) 651--673.

\bibitem[BJN]{BJN}
R. Biagioli, F. Jouhet and P. Nadeau, \emph{Fully commutative elements and lattice walks},
FPSAC 2013, DMTCS proc.

\bibitem[BB05]{bjornerbrenti}
A. Bjorner and F. Brenti,
\emph{Combinatorics of Coxeter groups}, Graduate Texts in Mathematics {\bf 231}, Springer, New
York, 2005. 
%
%

\bibitem[F95]{F95}
S. Fomin, 
{\it Schur operators and Knuth correspondences},
J. Combin. Theory Ser. A {\bf 72-2} (1995) 277--292. 

\bibitem[FG98]{FG98}
S. Fomin and C. Greene {\it Noncommutative Schur functions and their applications}. Selected papers in honor of Adriano Garsia (Taormina, 1994). Discrete Math.  {\bf 193-1-3}  (1998) 179--200.

\bibitem[Hum]{humphreys}
J. E. Humphreys {\it Reflection groups and Coxeter groups}. Cambridge University Press.  (1992).


\bibitem[KLS]{positroid} 
A. Knutson, T. Lam and David Speyer,
\emph{Positroid Varieties: Juggling and Geometry},
arxiv: 1111.3660 (2011).
%

\bibitem[Lam]{Lam}
T. Lam, 
\emph{Schubert polynomials for the affine Grassmannian},
J. Amer. Math. Soc. {\bf 21-1} (2008) 259--281.

\bibitem[LLMS]{LLMS}
T. Lam, L. Lapointe, J. Morse and M. Shimozono, 
\emph{Affine insertion and Pieri rules for the affine Grassmannian},
Mem. Amer. Math. Soc. {\bf 208-977} (2010).

\bibitem[LLM]{LLM03}
L. Lapointe, A. Lascoux, and J. Morse, 
\emph{Tableau atoms and a new macdonald positivity conjecture}, Duke Math. J. {\bf 116}
(2003) 103--146.

\bibitem[LM05]{LM1} L.~Lapointe and J.~Morse, 
\emph{Tableaux on $k+1$-cores, reduced words for affine
permutations, and $k$-Schur expansions}, J. Combin. Theory Ser. A
\textbf{112-1} (2005)  44--81.


\bibitem[LM07]{lapointemorseDEF}
L. Lapointe and J. Morse, \emph{ A k-tableaux characterization of k-Schur functions}, Adv. Math.
{\bf 213-1} (2007) 183--204.
%

\bibitem[LS81]{LS81}
A.~Lascoux and M.-P. Sch{\"u}tzenberger, {\em Le monoide plaxique}, Quaderni de la ricerca scientifica {\bf 109} (1981) 129--159.

\bibitem[LS82]{LS82a}
A.~Lascoux and M.-P. Sch{\"u}tzenberger, {\em Polyn{\^o}mes de
  {S}chubert}, C. R. Acad. Sci. Paris, {\bf 294} (1982) 447--450.

\bibitem[M91]{Macdonald91}
I.~G. Macdonald, {\em Notes on {S}chubert Polynomials}, Laboratoire de
  combinatoire et d'informatique math\'ematique {(LACIM)}, Universit\'e du
  Qu\'ebec \`a Montr\'eal, Montr\'eal, 1991.

\bibitem[M95]{macdonald} {I.G. Macdonald}, emph{Symmetric functions and Hall polynomials}, 2nd ed., Oxford University Press, 1995.

\bibitem[Post]{postnikov}
A. Postnikov, \emph{Affine approach to quantum Schubert calculus},
Duke Math. J. \textbf{128-3} (2005) 473--509. 

\bibitem[Sagan]{sagan} B. Sagan, \emph{The Symmetric Group: Representations, Combinatorial Algorithms, and Symmetric Functions}, Graduate Texts in Mathematics, 203 (2000).

\bibitem[Sot]{Sottile}
F. Sottile, \emph{Pieri's formula for flag manifolds and Schubert polynomials},
Ann. de l'institut Fourier \textbf{46-1} (1996) 89--110.

\bibitem[Stan84]{stan84} R. Stanley, \emph{On the number of reduced decompositions of elements of Coxeter groups}, Europ. J.
Combin., {\bf 5} (1984) 359--372.

\bibitem[Stan00]{stanley} R. Stanley, \emph{Positivity problems and conjectures in algebraic combinatorics},
Mathematics: frontiers and perspectives, Amer. Math. Soc., Providence, RI, (2000)  295--319.

\bibitem[Stemb]{Stemb} 
J. R. Stembridge, \emph{The enumeration of fully commutative elements of Coxeter groups},
J. Algebraic Combin.
{bf 7-3} (1998) 291--320.

   
  \end{thebibliography}
\end{document}